\documentclass[11pt,reqno]{amsart}
\usepackage{graphicx} 
\usepackage{amsfonts,amssymb,amsmath,amsthm,adjustbox}
\usepackage[margin=1in]{geometry}
\usepackage[hidelinks]{hyperref}
\usepackage{enumerate}
\usepackage{microtype}
\usepackage{mathtools}
\usepackage{cite}
\usepackage{color}
\usepackage{tikz-cd}
\usepackage{float}
\usepackage{comment}

\theoremstyle{plain}
\newtheorem{theorem}{Theorem}
\newtheorem{corollary}[theorem]{Corollary}
\newtheorem{lemma}[theorem]{Lemma}
\newtheorem{proposition}[theorem]{Proposition}
\newtheorem{conjecture}[theorem]{Conjecture}
\theoremstyle{definition}
\newtheorem{definition}[theorem]{Definition}

\usepackage{tikz}
\usetikzlibrary{fit,shapes.geometric}

\newcommand{\none}[1]{#1}

\newcommand{\tri}[1]{%
\tikz[baseline=(X.base)]{
\node (X) {$#1$};
\node[draw, regular polygon, regular polygon sides=3, fit=(X), inner sep=2pt] {};
}}

\newcommand{\pent}[1]{%
\tikz[baseline=(X.base)]{
\node (X) {$#1$};
\node[draw, regular polygon, regular polygon sides=5, fit=(X), inner sep=2pt] {};
}}

\newcommand{\both}[1]{%
\tikz[baseline=(X.base)]{
\node (X) {$#1$};
\node[draw, regular polygon, regular polygon sides=5, fit=(X), inner sep=2pt] {};
\node[draw, regular polygon, regular polygon sides=3, fit=(X), inner sep=2pt] {};
}}

\theoremstyle{remark}

\newcommand{\mc}[1]{\href{https://beta.lmfdb.org/ModularCurve/Q/#1/}{\texttt{#1}}}
\newcommand{\ec}[1]{\href{https://www.lmfdb.org/EllipticCurve/Q/#1/}{\texttt{#1}}}

\def\Q{\mathbb{Q}}

\def\Z{\mathbb{Z}}
\def\F{\mathbb{F}}

\def\Gal{\operatorname{Gal}}
\def\lcm{\operatorname{lcm}}

\DeclareMathOperator{\rad}{rad}

\DeclareMathOperator{\GL}{GL}
\DeclareMathOperator{\SL}{SL}
\DeclareMathOperator{\PSL}{PSL}

\DeclareMathOperator{\tr}{tr}

\DeclareMathOperator{\Frob}{Frob}
\DeclareMathOperator{\Aut}{Aut}

\newcommand{\isom}{\simeq}

\DeclareMathOperator{\Prime}{prime}

\title{Congruence obstructions in the refined Koblitz conjecture}

\author{Sung Min Lee}
\address{Sung Min Lee, Department of Mathematics, Wake Forest University, Winston-Salem, NC}

\author{Jacob Mayle}
\address{Jacob Mayle, Department of Mathematical Sciences, University of Delaware, Newark, DE}

\author{Rakvi}
\address{Rakvi, Department of Mathematics, The University of Maine, Orono, ME}

\subjclass[2020]{Primary 11G05; Secondary 11F80, 11G18, 11N05}

\begin{document}

\begin{abstract}
Let $E/\Q$ be an elliptic curve. Zywina's refinement of Koblitz's conjecture predicts that there are infinitely many primes $p$ of good reduction for which $\#E_p(\F_p)$ is prime precisely when $E$ has no congruence obstruction. For non-CM elliptic curves over $\Q$, we classify all primitive congruence obstructions. In particular, we show that a primitive obstruction of composite level can occur only at $6$, $10$, $14$, $15$, or $30$ and determine the corresponding Galois images. As a consequence, for non-CM elliptic curves, the existence of any congruence obstruction is detected by the mod $210$ Galois image, so the positivity of the Koblitz--Zywina constant is determined at level $210$. We also determine which primitive obstruction levels can occur simultaneously. Finally, we prove, assuming GRH, that if $E$ has no congruence obstruction, then for every $0<\kappa<1/8$ there are infinitely many primes $p$ of good reduction for which the least prime divisor of $\#E_p(\F_p)$ is greater than $\kappa\log p$.
\end{abstract}

\maketitle

\section{Introduction} \label{S:Intro}

Let $E/\Q$ be an elliptic curve, and let $N_E$ denote its conductor. For each prime $p\nmid N_E$, the reduction $E_p$ of $E$ modulo $p$ is an elliptic curve over $\F_p$, and
\[
\#E_p(\F_p)=p+1-a_p(E),
\]
where $a_p(E)$ is the trace of Frobenius. Motivated in part by applications to cryptography, Koblitz \cite{MR917870} studied the frequency with which $\#E_p(\F_p)$ is prime as $p$ varies. Building on the heuristics of Hardy and Littlewood \cite{MR1555183}, he conjectured an asymptotic formula for the function
\[
\pi_E^{\Prime}(x)
\coloneqq
\#\{p\leq x:p\nmid N_E\text{ and }\#E_p(\F_p)\text{ is prime}\}.
\]
In particular, the conjecture predicts that $\pi_E^{\Prime}(x)\to\infty$ as $x\to\infty$ whenever $E$ is not rationally isogenous to an elliptic curve over $\Q$ with nontrivial rational torsion \cite[Conjectures~A and~B]{MR917870}.

This conjecture stood for two decades, until Jones discovered a striking counterexample, discussed in \cite[Section~1.1]{MR2805578}. The elliptic curve
\[
E\colon y^2=x^3+9x+18
\]
is not rationally isogenous to an elliptic curve with nontrivial rational torsion. Nevertheless, $\#E_p(\F_p)$ is composite for every prime $p \neq 5$ of good reduction. Indeed, if $p\equiv 1\pmod 4$, then $3$ divides $\#E_p(\F_p)$, while if $p\equiv 3\pmod 4$, then $2$ divides $\#E_p(\F_p)$. The source of this phenomenon is an entanglement between the $2$- and $3$-division fields of $E$: both contain $\Q(i)$ as a subfield.

Following Jones's counterexample, Zywina \cite{MR2805578} refined Koblitz's conjecture to account for entanglements among division fields. A key notion in Zywina's refinement is that of a congruence obstruction, which we now recall.

\begin{definition} \label{D:CongObs}
Let $E/\mathbb{Q}$ be an elliptic curve, and let $m \geq 2$ be an integer. We say that $E$ has a \emph{congruence obstruction modulo $m$} if, for all but finitely many primes $p$ of good reduction,
\[
\gcd(\#E_p(\mathbb{F}_p), m) > 1.
\]
If, moreover, $E$ has no congruence obstruction modulo any proper divisor $n \geq 2$ of $m$, then we say that the congruence obstruction modulo $m$ is \emph{primitive}.
\end{definition}

Congruence obstructions admit an equivalent formulation in terms of Galois images, which we recall in Section~\ref{S:Prelims}. Zywina formulates both quantitative and qualitative versions of the refined conjecture for a fixed positive integer $t$, asking how often $\#E_p(\F_p)/t$ is prime. For $t=1$, the qualitative conjecture asserts the following \cite[Conjecture~2.2]{MR2805578}.

\begin{conjecture}
Let $E/\Q$ be an elliptic curve. If $E$ has no congruence obstruction modulo any integer $m \geq 2$, then there are infinitely many primes $p$ of good reduction for $E$ such that $\#E_p(\F_p)$ is prime.
\end{conjecture}

The main objective of this article is to classify these obstructions for non-CM elliptic curves over $\Q$. For an integer $m \geq 2$, let $\operatorname{rad}(m) = \prod_{p \mid m} p$ denote the radical of $m$. Since
\[
\gcd(\#E_p(\F_p),m)>1
\quad\iff\quad
\gcd(\#E_p(\F_p),\operatorname{rad}(m))>1,
\]
an elliptic curve has a congruence obstruction modulo $m$ if and only if it has one modulo $\operatorname{rad}(m)$. Consequently, every primitive obstruction level is squarefree.

At prime level, the obstruction is governed by rational torsion in the isogeny class. Indeed, a theorem of Katz \cite[Theorem 2]{MR604840} implies that $E$ has a congruence obstruction modulo a prime $\ell$ if and only if $E$ is rationally isogenous to an elliptic curve with a rational point of order $\ell$. Together with Mazur's theorem on rational torsion \cite{MR488287}, this shows that a prime level congruence obstruction can occur only for
\(
\ell\in\{2,3,5,7\};
\)
see Proposition~\ref{P:prime-level}.

For composite levels, the situation is more subtle because of the role of entanglements among the division fields. Jones's counterexample has a primitive congruence obstruction modulo $6$ arising from an entanglement between its $2$- and $3$-division fields. Our main theorem gives a complete classification of primitive congruence obstructions at arbitrary composite levels.

\begin{theorem} \label{T:class} Let $E/\Q$ be a non-CM elliptic curve, and let $m \geq 2$ be a composite integer. Then $E$ has a primitive congruence obstruction modulo $m$ if and only if
\[
m \in \{ 6, 10, 14, 15, 30 \},
\]
and, up to conjugacy, the mod $m$ Galois image of $E$ is one of the groups listed in Tables~\ref{tab:infpoints} and \ref{tab:exceptional-images}. 
\end{theorem}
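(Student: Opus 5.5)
The plan is to translate the obstruction into a group-theoretic condition on the Galois image and then combine known classification results for images of non-CM curves with a finite computation. By Chebotarev (the equivalent formulation recalled in Section~\ref{S:Prelims}), $E$ has a congruence obstruction modulo a squarefree $m$ if and only if every $g \in G_E(m) := \rho_{E,m}(\Gal_\Q)$ satisfies $\det(I-g) \not\equiv 0$ fails to be a unit, i.e.\ $\det(I-g)=1-\tr g+\det g$ is not a unit in $\Z/m\Z$. Equivalently, for every $g$ there is some prime $\ell \mid m$ with $\det(I-g_\ell)\equiv 0 \pmod \ell$, where $g_\ell$ is the projection. Primitivity then says that for each proper divisor $n$ of $m$ there is some $g$ with $\det(I-g)$ a unit mod $n$. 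A first reduction: if the obstruction at $m$ is primitive, no $\ell \mid m$ has an obstruction at $\ell$, so by Proposition~\ref{P:prime-level} (Katz) no curve in the isogeny class has an $\ell$-torsion point for $\ell \mid m$. A second observation: if $G_E(m)\cong \prod_{\ell\mid m} G_E(\ell)$ (no entanglement), then choosing for each $\ell$ an element with $\det(I-g_\ell)\neq 0$ gives a non-obstructed element. So a primitive composite obstruction forces a genuine entanglement among the $\ell$-division fields, $\ell \mid m$.

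Next I bound the primes. For $\ell$ with $\rho_{E,\ell}$ surjective, the fixed-point-free elements ($\det(I-g)\ne0$) make up a large proportion of $\GL_2(\F_\ell)$, and they fill out every fibre of the determinant, in fact every coset of the commutator $\SL_2$. Since $\Q(E[\ell])$ for $\ell\ge 5$ surjective has only the abelian entanglement $\Q(\zeta_\ell)$ with other fields (the $\PSL_2(\F_\ell)$ part is simple and non-abelian), a fibre-product argument over the common quotient shows that any obstruction at $m$ involving such $\ell$ already occurs at $m/\ell$, contradicting primitivity. So every $\ell\mid m$ with $\ell\ge 5$ has non-surjective mod $\ell$ image; by Serre's open image results and the classification of non-surjective mod $\ell$ images (Zywina; Balakrishnan et al.\ for $\ell=13$; Furio--Lombardo/Le Fourn--Lemos for $\ell\geq 17$), non-CM curves only allow $\ell\in\{2,3,5,7,11,13,17,37\}$, and the latter cases occur for finitely many $j$ (or Borel/normalizer-type groups) that can be checked directly. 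Similar fibre-product bookkeeping, using that $\det(I-g)$ is a unit for most $g$ in any coset of the relevant kernel, should show $m$ has at most three prime factors and lies in $\{2,3,5,7\}$-products, reducing to $m\mid 210$.

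Finally, for each squarefree $m\mid 210$ I enumerate, up to conjugacy, subgroups $G\le \GL_2(\Z/m\Z)$ with surjective determinant, containing an element of trace $0$ and determinant $-1$ (complex conjugation), that satisfy the primitive obstruction condition, and then determine which actually occur as images of non-CM curves over $\Q$ by computing the corresponding modular curves: those of genus $0$ or $1$ with infinitely many points give Table~\ref{tab:infpoints}, and the remaining ones require determining the rational points (exceptional $j$-invariants) giving Table~\ref{tab:exceptional-images}; levels $21,35,42,70,105,210$ will be shown to have no primitive obstructions either by the group computation or because the associated curves have only cusps/CM points. The converse direction is immediate: each listed group satisfies the group condition. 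The main obstacle is this last step, provably determining the rational points on the higher-genus modular curves arising (e.g.\ at levels $14$, $15$, $30$ and the excluded levels), where I would first try maps to rank-zero elliptic or modular curves and the LMDB database of known images, falling back on Chabauty-type arguments.
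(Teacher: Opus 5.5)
Your overall architecture (Chebotarev reformulation, fibre products to bound the primes, a finite group search, then rational points on modular curves) is the paper's. Your full-image step also goes through once you justify the ``only abelian entanglement'' claim: a non-abelian common quotient $Q$ would contain the nontrivial normal subgroup $\phi(\SL_2(\F_\ell))$. That would make $\PSL_2(\F_\ell)$ a composition factor of $G_E(m/\ell)$, which Theorem~\ref{T:SOTA} excludes.

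\textbf{Gap 1: the normalizer of a nonsplit Cartan.} You assert that the known classification leaves only $\ell\in\{2,3,5,7,11,13,17,37\}$, with finitely many $j$-invariants to check at the large primes. This is false. By Theorem~\ref{T:SOTA}, the case $G_E(\ell)=C_{\rm ns}^+(\ell)$ is open for every $\ell\ge 19$ (Serre's uniformity question), and $C_{\rm ns}^+(11)$ occurs for infinitely many $j$.

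Fibre-product bookkeeping cannot dispose of this case either. Let $K=C_{\rm ns}(\ell)\cap\SL_2(\F_\ell)$, which is normal in $C_{\rm ns}^+(\ell)$. The coset $\omega K$ consists of matrices of trace $0$ and determinant $-1$, so it lies entirely in $\mathcal I_1(\ell)$ and can be an obstructed fibre.

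What is missing is the arithmetic input of Lemma~\ref{L:Nn-fiber-det}, Lemma~\ref{L:NoCyc}, and Proposition~\ref{P:exclude-nonsplit-prime}:
\begin{enumerate}
\item An obstructed fibre forces $\ker\phi\subseteq\SL_2(\F_\ell)$, hence $\Q(\zeta_\ell)\subseteq\Q(E[m/\ell])$.
\item A nonsplit Cartan image forces potentially good reduction at $\ell$. Then ramification indices above $\ell$ in $\Q(E[m/\ell])$ are at most $6<\ell-1$ for $\ell\ge 11$, a contradiction.
\end{enumerate}
Without this you have no bound at all on the largest prime divisor of $m$.

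\textbf{Gap 2: the reduction to $m\mid 210$.} This is unsupported. Borel images at $13$ come from $X_0(13)$, which has genus $0$, so they are not a finite check. The phrase ``should show $m$ has at most three prime factors'' is not an argument, and it is inconsistent with your later treatment of $m=210$.

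The paper instead uses the rational $13$-isogeny to force surjectivity mod $5$, $7$, and $11$. The full-image step then leaves $m\mid 78$ (Proposition~\ref{P:13-large}). The group search must therefore also be run at $26$, $39$, and $78$, which your enumeration omits.

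\textbf{Further issues with ``checking directly.''} The finitely many $j$-invariants from $X_0(11)$, $X_{S_4}(13)$, $X_0(17)$, and $X_0(37)$ each carry infinitely many quadratic twists. So do the exceptional $j$-invariants at levels $15$ and $30$. Handling them needs a twist-uniform device, such as Proposition~\ref{P:Twist} or the character-enumeration argument of Proposition~\ref{P:exceptional-twists}.

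Finally, an unrestricted enumeration of subgroups of $\GL_2(\Z/210\Z)$ is computationally infeasible. It needs the pruning supplied by Lemma~\ref{L:Sum} and Propositions~\ref{P:7-large} and~\ref{P:level-210}.
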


The proof reveals a sharp difference between the levels $6$, $10$, and $14$ as compared to levels $15$ and $30$ for congruence obstructions. For each $m\in\{6,10,14\}$, there are infinitely many $\overline{\Q}$-isomorphism classes containing an elliptic curve over $\Q$ with a primitive congruence obstruction modulo $m$. By contrast, the non-CM elliptic curves over $\Q$ with a primitive congruence obstruction modulo $15$ are precisely
\[
\ec{450.c1},\quad
\ec{450.c2},\quad
\ec{450.c3},\quad
\ec{450.c4},
\]
and those with a primitive congruence obstruction modulo $30$ are precisely
\[
\ec{14400.ef1},\quad
\ec{14400.ef2},\quad
\ec{14400.ef3},\quad
\ec{14400.ef4}.
\]
The isogeny class \(\ec{14400.ef}\) is the quadratic twist by \(-2\) of the isogeny class \(\ec{450.c}\). Here and below, we use the labels from the $L$-Functions and Modular Forms Database (LMFDB) \cite{lmfdb}.

It is possible that a single elliptic curve has primitive obstructions modulo more than one integer. For example, the elliptic curve with LMFDB label \ec{66.c3} has a rational point of order $10$ and therefore a primitive congruence obstruction modulo both $2$ and $5$. Building on Theorem~\ref{T:class}, we also determine which distinct primitive obstruction levels can occur for the same elliptic curve. We define the \emph{primitive obstruction set} of $E$ to be
\[
\operatorname{PrimObs}(E)
\coloneqq
\{m\geq 2:E\text{ has a primitive congruence obstruction modulo }m\}.
\]
Specifically, we prove the following result.

\begin{theorem} \label{T:set}
If $E/\Q$ is a non-CM elliptic curve, then
\[
\operatorname{PrimObs}(E)\in
\left\{
\emptyset,\,
\{2\},\,
\{3\},\,
\{5\},\,
\{6\},\,
\{7\},\,
\{10\},\,
\{14\},\,
\{15\},\,
\{30\},\,
\{2,3\},\,
\{2,5\}
\right\}.
\]
Moreover, every set in the displayed list occurs.
\end{theorem}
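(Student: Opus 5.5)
Since primitive obstruction levels are squarefree, by Proposition~\ref{P:prime-level} and Theorem~\ref{T:class} every element of $\operatorname{PrimObs}(E)$ lies in $\{2,3,5,7,6,10,14,15,30\}$. I would first record the structural constraints imposed by primitivity. If $m\in\operatorname{PrimObs}(E)$ is composite, then $E$ has no obstruction at any prime divisor of $m$. Conversely, an obstruction at a prime $\ell$ forbids primitive obstructions at every composite multiple of $\ell$, since such a multiple would have an obstructed proper divisor. The pairs left to rule out are therefore:
\begin{itemize}
\item two primes among $\{2,3,5,7\}$ other than $\{2,3\}$ and $\{2,5\}$;
\item a prime $\ell$ together with a composite level coprime to $\ell$ (for example $\{3,10\}$, $\{5,6\}$, $\{7,6\}$, $\{2,15\}$, $\{3,14\}$, $\{7,15\}$);
\item two composite levels (for example $\{6,10\}$, $\{6,14\}$, $\{10,14\}$, $\{6,15\}$, $\{15,30\}$, and so on).
\end{itemize}
Some of these are already excluded because one level divides or overlaps another in a way that violates primitivity.

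For two primes, Katz's theorem says obstructions at $\ell_1$ and $\ell_2$ mean $E$ is isogenous to curves $E_1,E_2$ with rational $\ell_1$- and $\ell_2$-torsion. Then $E$ admits rational $\ell_1$- and $\ell_2$-isogenies, hence a rational cyclic $\ell_1\ell_2$-isogeny. By the classification of rational cyclic isogenies for non-CM curves, $\ell_1\ell_2\in\{6,10,14,15,21,35\}$ are the candidates, and $14$, $35$ are excluded except for finitely many $j$-invariants. I would then check that the Galois image constraints needed for the relevant torsion are incompatible in the cases $\{2,7\}$, $\{3,5\}$, $\{3,7\}$, $\{5,7\}$. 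Concretely, rational $\ell$-torsion on an isogenous curve means the mod $\ell$ image is contained in a Borel with trivial character on one diagonal entry. For the finitely many sporadic $j$-invariants (for instance the $X_0(14)$, $X_0(15)$, $X_0(21)$ points), I would inspect the isogeny classes directly in the LMFDB. Examples realizing $\{2,3\}$ and $\{2,5\}$ are curves with rational $6$- or $10$-torsion, such as \ec{66.c3}.

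For pairs involving a composite level, I would use the explicit mod $m$ images in Tables~\ref{tab:infpoints} and~\ref{tab:exceptional-images}.
\begin{itemize}
\item Levels $15$ and $30$: the curves are the eight explicit curves listed, so I would compute their full adelic images (or their mod $210$ images, say from LMFDB) directly. This shows $\operatorname{PrimObs}$ is exactly $\{15\}$ or $\{30\}$. For instance, I would check that \ec{450.c} has no obstruction at $2$ or $7$ and no primitive obstruction at $14$.
\item Levels $6$, $10$, $14$: a second level coprime to $m$ forces the mod $\operatorname{lcm}$ image to lie in a fiber product of the corresponding groups. I would show that such fiber products either do not have surjective determinant with the correct structure, or give modular curves of genus $\ge 2$ (or of genus $\le 1$ with rank $0$) whose rational points are only cusps or CM points. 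This would be verified by a Magma computation of genus, ranks, and rational points.
\item Pairs sharing a prime, such as $\{6,10\}$ or $\{10,14\}$: the mod $2$ image must simultaneously meet both entanglement patterns. I expect this to force an obstruction at $2$ itself, or to force the mod $2$ image to be trivial, which again contradicts primitivity.
\end{itemize}

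The main obstacle is this last fiber-product analysis: enumerating the modular curves attached to simultaneous images at levels like $30$, $42$, $70$ and determining their rational points. For the higher-genus cases I would first try local obstructions or maps to rank-zero elliptic quotients. Finally, existence of $\emptyset$, $\{2\}$, $\{3\}$, $\{5\}$, $\{7\}$, $\{6\}$, $\{10\}$, $\{14\}$, $\{15\}$, $\{30\}$ would be shown by exhibiting explicit LMFDB curves: a Serre curve for $\emptyset$, curves with torsion of order $2$, $3$, $5$, $7$ whose isogeny classes have no other obstructed prime, Jones's curve for $\{6\}$, and the curves listed after Theorem~\ref{T:class}. For each, I would check via Theorem~\ref{T:class} that no further primitive level occurs.
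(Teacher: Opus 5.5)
Your prime-level argument (Katz, then a rational cyclic $\ell_1\ell_2$-isogeny, then a finite check of the sporadic classes) is a hands-on version of the isogeny--torsion graph classification the paper cites. Your treatment of levels $15$ and $30$ through the eight explicit curves matches the paper. (Minor: $35$ is not a rational cyclic isogeny degree at all, and the non-cuspidal points of $X_0(14)$ are CM, so the only non-CM sporadic cases lie over $X_0(15)$ and $X_0(21)$.)

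The genuine gap is the case of two composite levels. After primitivity and the explicit $15$/$30$ curves, what remains is exactly $\{6,10\}$, $\{6,14\}$, $\{10,14\}$. The mechanism you expect there, namely that the shared prime $2$ forces an obstruction at $2$ or a trivial mod $2$ image, does not happen. For every level $6$, $10$, $14$ group in Table~\ref{tab:infpoints}, the mod $2$ image contains a $3$-cycle (otherwise there would be an obstruction at $2$), and it is in fact all of $\GL_2(\F_2)$. As in Jones's example, the obstruction arises by identifying the sign character of $E[2]$, i.e.\ $\Q(\sqrt{\Delta_E})$, with a quadratic character cut out by the odd division field. Two such obstructions merely force the same quadratic field into two different odd division fields. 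This creates an extra entanglement among the $3$-, $5$-, $7$-division fields and leaves the mod $2$ image untouched, so no contradiction can come from the prime $2$.

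What is actually needed is arithmetic input at the odd part. The paper shows that for $17$ of the $21$ pairs of image types, the fiber product covers a curve of level $15$, $21$, or $35$ with no non-CM rational points (for example \mc{15.72.3.a.1}, \mc{21.96.5.a.1}, \mc{35.144.7.a.1}). The four remaining pairs combine an index-$8$ level-$6$ group with an index-$24$ level-$10$ group. These are treated via their map to $X_0(15)$, which does have four non-CM $j$-invariants (the class \ec{50.a}). There one enumerates quadratic twists and checks that none lands in both groups. This is a finite task because the level-$6$ groups do not contain $-I$, so the twisting character factors through $\Q(E[6])$. Your ``main obstacle'' paragraph points at modular curves of levels $30$, $42$, $70$. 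Without locating the contradiction at levels $15$, $21$, $35$, and without anticipating the sporadic $X_0(15)$ points, this is not yet an argument.

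The same sporadic points appear in your composite-plus-prime case. Any curve you build there maps to $X_0(15)$ or $X_0(21)$, and the non-CM classes \ec{50.a}, \ec{50.b}, \ec{450.g}, \ec{162.b}, \ec{162.c} must be confronted. The paper avoids these curves entirely. A primitive obstruction at $6$, $10$, $14$ (resp.\ $15$, $30$) forces a rational $3$-, $5$-, $7$- (resp.\ cyclic $15$-) isogeny. Combining this with the rational $q$-torsion in the isogeny class from Katz's theorem, the isogeny--torsion graph classification reduces to $(d,q)\in\{(6,5),(10,3),(14,3)\}$. That leaves five isogeny classes, each ruled out directly by checking $\mathcal F_1(G_E(d))<1$.
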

In particular, a primitive obstruction of composite level never occurs together with any other primitive obstruction. We see that the possible primitive obstruction levels are
\begin{equation} \label{E:prim-obs-levels}
2,\ 3,\ 5,\ 6,\ 7,\ 10,\ 14,\ 15,\ \text{and }30,
\end{equation}
whose least common multiple is $210$. This gives a single universal modulus that detects whether a congruence obstruction exists.

\begin{corollary}\label{C:universal210}
Let \(E/\Q\) be a non-CM elliptic curve. Then \(E\) has a congruence obstruction modulo some integer \(m\geq2\) if and only if it has a congruence obstruction modulo \(210\).
\end{corollary}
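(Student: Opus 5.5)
The plan is to reduce the corollary to a monotonicity property of congruence obstructions together with the list of possible primitive levels in \eqref{E:prim-obs-levels}, which follows from Theorem~\ref{T:class} and Proposition~\ref{P:prime-level}.

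The first step is the observation that obstructions pass to multiples. If $n \mid m$ and $E$ has a congruence obstruction modulo $n$, then it has one modulo $m$. Indeed, for all but finitely many good primes $p$ we have $\gcd(\#E_p(\F_p),n)>1$. Any common prime divisor of $\#E_p(\F_p)$ and $n$ also divides $m$, so $\gcd(\#E_p(\F_p),m)>1$ for the same primes $p$.

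For the reverse direction, the ``if'' is trivial: $m=210\geq 2$ is itself an admissible modulus. For the ``only if'', suppose $E$ has a congruence obstruction modulo some $m\geq 2$. Among the divisors $n\geq 2$ of $m$ at which $E$ has an obstruction, choose one, $n_0$, that is minimal with respect to divisibility. This set is finite and nonempty, since it contains $m$. By minimality, $E$ has no obstruction modulo any proper divisor $n\geq 2$ of $n_0$. Any such proper divisor also divides $m$, so it lies in the set we minimized over. Hence the obstruction modulo $n_0$ is primitive.

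By Proposition~\ref{P:prime-level} when $n_0$ is prime, and Theorem~\ref{T:class} when $n_0$ is composite, $n_0$ lies in $\{2,3,5,6,7,10,14,15,30\}$. Every element of this set divides $210=2\cdot3\cdot5\cdot7$. The first step applied to $n_0\mid 210$ then gives an obstruction modulo $210$.

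There is no real obstacle here. All the depth lies in Theorem~\ref{T:class} and the prime-level classification. The only point needing care is the minimality argument producing a primitive divisor. In particular, primitivity is defined relative to proper divisors of $n_0$ itself, and the argument has to check that these are all among the divisors of $m$ that were considered.
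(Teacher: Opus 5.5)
Your proof is correct and follows essentially the same argument as the paper: you choose a divisor of $m$ that is minimal under divisibility among those carrying an obstruction, note that the obstruction there is primitive, and observe that every possible primitive level divides $210$, so the obstruction passes up to $210$. The only cosmetic differences are that the paper first replaces $m$ by $\rad(m)$ and cites Theorem~\ref{T:set}, whereas you invoke Theorem~\ref{T:class} and Proposition~\ref{P:prime-level} directly, which is all that is actually needed.
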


This corollary has a consequence for the constant appearing in the quantitative version of the Koblitz--Zywina conjecture, which predicts that
\begin{equation}\label{E:KZ-Quant}
\pi_E^{\Prime}(x)
\sim
C_E\frac{x}{(\log x)^2}
\qquad\text{as }x\to\infty,
\end{equation}
where we write $C_E \coloneqq C_{E,1}$, with $C_{E,t}$ as in \cite[Definition~2.1]{MR2805578}. When $C_E = 0$, the asymptotic \eqref{E:KZ-Quant} is interpreted as indicating that $\pi_E^{\Prime}(x)$ is bounded as $x \to \infty$. Zywina shows that $C_E=0$ precisely when $E$ has a congruence obstruction modulo some integer $m \geq 2$ \cite[Section~2.1]{MR2805578}. Corollary~\ref{C:universal210} therefore implies that, for every non-CM elliptic curve $E/\Q$,
\[
C_E=0
\quad\Longleftrightarrow\quad
E\text{ has a congruence obstruction modulo }210.
\]
Equivalently, the positivity of $C_E$  is determined entirely by the mod $210$ Galois image. Thus, Corollary~\ref{C:universal210}  gives a finite level answer to Jones's question concerning criteria for the positivity of the Koblitz--Zywina constant \cite[Question~A12]{MR2727656}. Since the mod $210$ Galois image can be computed using Zywina's code \cite{ZywinaOpen, ZywinaOpenImage}, the resulting criterion is effective.

One may also express the criterion directly in terms of a single reduction: For non-CM elliptic curves $E/\Q$, we have \(C_E>0\) if and only if there exists a prime \(p>7\) of good reduction such that
\[
\gcd(\#E_p(\F_p),210)=1.
\]
Indeed, such a prime gives a Frobenius element in the mod \(210\) Galois image lying outside \(\mathcal I_1(210)\), where $\mathcal{I}_1$ is as in \eqref{E:I1}. Conversely, if the mod \(210\) image contains such an element, the Chebotarev density theorem implies the existence of infinitely many primes with the same property. An effective version of the Chebotarev density theorem, for example \cite{MR1355006} under the generalized Riemann hypothesis (GRH), can be used to bound the least such prime \(p\) in terms of \(N_E\), although we do not pursue this direction here.

Related questions concerning the vanishing of constants arising in other elliptic curve conjectures have also been studied. For the cyclicity conjecture, which concerns the distribution of primes \(p\) of good reduction for which \(E_p(\F_p)\) is cyclic, Serre observed that the corresponding constant vanishes precisely when \(\Q(E[2])=\Q\), or equivalently, when \(E\) has full rational \(2\)-torsion \cite[pp.~465--466]{MR3223094}. More recently, Jones and Vissuet \cite{MR4928015} studied the vanishing of Lang--Trotter constants and classified the maximal missing trace groups whose associated modular curves have genus zero, including composite level obstructions arising from entanglements among division fields.

The factor $x/(\log x)^2$ in \eqref{E:KZ-Quant} arises from the same heuristic that underlies the twin prime conjecture: one factor of $1/\log x$ comes from requiring $p$ to be prime, and a second comes from requiring the integer $\#E_p(\F_p)$, whose size is comparable to $p$ by the Hasse bound, to be prime. For this reason, the Koblitz--Zywina conjecture is often viewed as an elliptic curve analogue of the twin prime conjecture; see, for example, \cite{MR2843097,MR2879973}.

Even the qualitative form of the Koblitz--Zywina conjecture remains open, but several lines of inquiry provide supporting evidence for it. One approach considers averages over families. Balog, Cojocaru, and David \cite{MR2843097} proved that the predicted asymptotic holds on average over a two-parameter family of elliptic curves, and Jones \cite{MR2534114} studied the associated constants on average. Another direction replaces ``prime'' by ``almost prime'' in the spirit of Chen's theorem \cite{MR207668, MR434997} related to the twin prime conjecture. Assuming GRH, Miri and Murty \cite{MR1934487} obtained an almost prime result for non-CM elliptic curves, and  Steuding and Weng \cite{MR2140162,MR2189069} sharpened this to at most $9$ prime factors, counted with multiplicity. David and Wu \cite{MR2879973} subsequently proved, again under GRH, that if $E$ is non-CM  and has no congruence obstruction, then there are infinitely many primes $p$ for which $\#E_p(\F_p)$ has at most $8$ prime factors, counted with multiplicity.

A related question concerns the least prime divisor of $\#E_p(\F_p)$. For a prime $p\geq5$ of good reduction, the Hasse bound implies that $\#E_p(\F_p)>1$, so we may define
\[
c_E(p)
\coloneqq
\text{the least prime divisor of }\#E_p(\F_p).
\]
Jones proved that $c_E(p)$ is unbounded as $p$ varies over the primes $p \geq 5$ of good reduction, provided that $E$ has no congruence obstruction \cite[Appendix]{MR2727656}. Our second main theorem, which applies to both CM and non-CM elliptic curves, gives a quantitative version of this result.

\begin{theorem}\label{Quanticep}
Assume GRH. Let $E/\Q$ be an elliptic curve with no congruence obstruction modulo any integer $m\geq2$. For every real number $0<\kappa<1/8$, there are infinitely many primes $p\geq5$ of good reduction for $E$ such that
\[
c_E(p)>\kappa\log p.
\]
\end{theorem}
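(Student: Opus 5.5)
We sieve the primes $p\le x$ by the integers $\#E_p(\F_p)$ with a simple inclusion--exclusion, using only the GRH-conditional effective Chebotarev theorem in the division fields $\Q(E[d])$ for squarefree $d$. Set $z=\kappa\log x$ and $P(z)=\prod_{\ell<z}\ell$. Since $\log P(z)\sim z$, we have $P(z)=x^{\kappa+o(1)}$. Every divisor $d\mid P(z)$ is therefore at most $x^{\kappa+o(1)}$. The goal is to show
\[
S(x)\coloneqq\#\{p\le x:\ p\nmid N_E,\ \gcd(\#E_p(\F_p),P(z))=1\}\gg \frac{x}{\log x}\prod_{\ell<z}\Bigl(1-\frac{1}{\ell}\Bigr)+O(\text{error}),
\]
with a positive main term that dominates the error. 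Any prime $p\in(\sqrt x,x]$ counted in $S(x)$ satisfies $c_E(p)\ge z>\kappa\log p$. Primes $p\le\sqrt x$ contribute only $O(\sqrt x)$, which is negligible against the main term. Letting $x\to\infty$ then gives infinitely many such $p$.

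\textbf{Step 1 (local densities).} Write $G(d)\subseteq\GL_2(\Z/d\Z)$ for the mod $d$ image and $\delta(d)=\#\{g\in G(d):\det(1-g)\equiv0\bmod \ell\ \text{for some}\ \ell\mid d\}/\#G(d)$. Let $\delta_0(d)=1-\delta(d)$ be the proportion of elements with $\det(1-g)$ a unit mod $d$; this is exactly the Chebotarev density of $\gcd(\#E_p(\F_p),d)=1$. Directly,
\[
S(x)=\pi(x)\,\delta_0(P(z))+\text{error}.
\]
We do not use M\"obius inclusion--exclusion over $d$. Instead we apply Chebotarev once, to the single field $\Q(E[P(z)])$ and the union of conjugacy classes with $\det(1-g)\in(\Z/P(z))^\times$. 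Under GRH (Lagarias--Odlyzko/Serre, \cite{MR1355006}), this gives
\[
S(x)=\delta_0(P(z))\,\pi(x)+O\bigl(\#G(P(z))\,x^{1/2}\log(xN_EP(z))\bigr).
\]
The error term uses the class-count form $|C|^{1/2}$, bounded crudely by $\#G\le P(z)^4=x^{4\kappa+o(1)}$. Serre's refinement, which takes square roots of class sizes, gives $|C|^{1/2}\le P(z)^{2}$. Hence the error is $x^{1/2+2\kappa+o(1)}$. This is $o(x^{1-\epsilon})$ precisely when $\kappa<1/4$. The stated hypothesis $\kappa<1/8$ suggests the authors use the cruder bound $\#G^{1/2}\cdot$(number of classes), or $\#G$ itself, giving $x^{1/2+4\kappa}$. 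I would follow whichever bound fits cleanly under $1/8$: $1/2+4\kappa<1$ holds exactly when $\kappa<1/8$.

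\textbf{Step 2 (lower bound for $\delta_0(P(z))$).} This is the main obstacle, and it is where the no-obstruction hypothesis enters. By Serre's open image theorem (non-CM), or by the CM analogue, there is an integer $M$ such that $G(P(z))$ is the full preimage of $G(\gcd(M,P(z)))$ times $\prod_{\ell\nmid M}\GL_2(\F_\ell)$ in the non-CM case. Hence $\delta_0$ factors as
\[
\delta_0(P(z))=\delta_0(M')\prod_{\ell<z,\ \ell\nmid M}\bigl(1-\delta_\ell\bigr),
\]
where $M'=\rad(M)$ for $x$ large. For full $\GL_2(\F_\ell)$, the proportion of $g$ with eigenvalue $1$ is $\frac{\ell^2-2}{(\ell-1)^2(\ell+1)}$, which is $1/\ell+O(\ell^{-2})$. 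The product is therefore $\gg 1/\log z$ by Mertens. The no-obstruction hypothesis says precisely that $G(M')$ contains an element $g$ with $\det(1-g)$ a unit mod $M'$, so $\delta_0(M')>0$. In the CM case one replaces $\GL_2$ by the normalizer of a Cartan subgroup, using independence for $\ell\nmid M$. There the split and nonsplit Cartan contributions again give $\delta_\ell=1/\ell+O(\ell^{-2})$ on average via Mertens for the splitting primes. The conclusion is $\delta_0(P(z))\gg_E 1/\log\log x$.

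\textbf{Step 3 (conclusion).} Combining the two steps gives $S(x)\gg x/(\log x\log\log x)-x^{1/2+4\kappa+o(1)}$, which is positive for large $x$ whenever $\kappa<1/8$. This proves Theorem~\ref{Quanticep}.
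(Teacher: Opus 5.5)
Your argument is correct in outline, with one repair needed in the CM case (second paragraph below). It follows a genuinely different route from the paper.

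\textbf{Comparison with the paper.} The paper never counts primes up to $x$. It fixes $n$, sets $M=\prod_{\ell\le n}\ell$, and applies a \emph{least-prime} form of effective Chebotarev with avoidance to $C=G_E(M)\setminus\mathcal I_1(M)$ in $\widetilde K=\Q(E[M])(\sqrt{\rad(2N_EM)})$. This produces a good prime $p\nmid M$ with $c_E(p)>n$ and $p\ll(\log d_{\widetilde K}+[\widetilde K:\Q])^2\ll_E M^8(\log M)^2$, so $\log p\le(8+o(1))n$; letting $n\to\infty$ gives infinitely many primes. The only arithmetic input is $C\neq\emptyset$, which is exactly the no-obstruction hypothesis at level $M$, together with $|G_E(M)|\le M^4$. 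It needs no open image theorem, no CM theory, and no lower bound for $|C|/|G_E(M)|$, and it treats CM and non-CM curves identically.

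Your counting version uses Lagarias--Odlyzko/Serre, with error $\ll |C|\,x^{1/2}\log(xN_EP(z))\le |G|\,x^{1/2}\log(xN_EP(z))$. It additionally needs $\delta_0(P(z))\ge x^{-o(1)}$. The trivial bound $\delta_0\ge 1/|G|\ge P(z)^{-4}$ would only give $\kappa<1/16$, so you genuinely need the structure of the adelic image. Your non-CM factorization $G_E(P(z))=G_E(d_1)\times\prod_{\ell\mid d_2}\GL_2(\F_\ell)$, where $d_1=\gcd(P(z),M)$ and $d_2=P(z)/d_1$, is correct. So are $\delta_0(d_1)>0$ and the estimate that the product is $\asymp 1/\log z$. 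In return you prove more: $\gg_E x/(\log x\log\log x)$ primes $p\le x$ with $c_E(p)>\kappa\log p$, not merely infinitely many. In both proofs the threshold $1/8$ comes from $|\GL_2(\Z/d\Z)|\le d^4$: the paper squares it, while you balance it against $x^{1/2}$.

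\textbf{The CM case.} The mod $P(z)$ image is \emph{not} a product of Cartan normalizers over $\ell\mid d_2$. Let $F$ be the CM field and $H_1$ the image of $\Gal(\overline{\Q}/F)$. Only $H_1$ splits, as $A\times\prod_{\ell\mid d_2}\mathcal C(\ell)$ with $\mathcal C(\ell)$ the Cartan mod $\ell$. Then $G_E(P(z))=H_1\sqcup\gamma H_1$ with $\gamma$ a single global complex conjugation. Hence
\[
\delta_0(P(z))=\tfrac12\bigl[\alpha\prod_{\ell\mid d_2}(1-a_\ell)+\beta\prod_{\ell\mid d_2}(1-b_\ell)\bigr].
\]
Here $a_\ell=(2\ell-3)/(\ell-1)^2$ or $1/(\ell^2-1)$ according as $\ell$ splits or is inert in $F$, and $b_\ell=1/(\ell-1)$. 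The quantities $\alpha,\beta$ are the proportions of $A$ and $\gamma A$ lying outside $\mathcal I_1(d_1)$, and $\alpha+\beta>0$ by the no-obstruction hypothesis at level $d_1$. Each product is $\asymp 1/\log z$ by Mertens for $F$. So your conclusion $\delta_0\gg_E 1/\log\log x$ survives, but not via the independence you state.

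\textbf{Two smaller remarks.} The $|C|^{1/2}$ refinement you mention, which would give $\kappa<1/4$, is known only under Artin's holomorphy conjecture in addition to GRH, so you are right not to rely on it. Also, the counting form of Chebotarev you use is due to Lagarias--Odlyzko and Serre. The Bach--Sorenson result gives a least-prime bound, which is what the paper uses.
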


In Theorem~\ref{Quanticep}, by GRH we mean the generalized Riemann hypothesis for the Dedekind zeta functions of the number fields to which \cite{MR1355006} is applied through Theorem~\ref{ECDTA}. Theorem~\ref{Quanticep} may be viewed as an elliptic curve analogue of classical results on the least prime divisor of $p+2$ for infinitely many primes $p$; see \cite{MR12111,MR12112} and the subsequent sieve-theoretic improvements discussed in \cite[Section~6]{MR2245880}. It is conceivable that sieve-theoretic methods could yield a stronger bound than Theorem~\ref{Quanticep}, but we do not pursue this direction here.

We conclude the introduction by describing the organization of the paper. In Section~\ref{S:Prelims}, we recall the necessary background on Galois representations, congruence obstructions, fiber products, modular curves, and prime level images. In Section~\ref{S:BoundLargest}, we bound the prime divisors of a primitive obstruction level, proving that the largest prime divisor is at most $13$ and is not equal to $11$. In Section~\ref{S:prime-restrictions}, we obtain further restrictions on the images modulo $2$, $3$, $5$, $7$, and $13$, reducing the levels and images that must be considered. In Section~\ref{S:Search}, we carry out a finite group-theoretic search, which reduces the classification to a finite collection of modular curves. In Section~\ref{S:RatPtWork}, we determine the rational points on these modular curves, analyze the relevant quadratic twists, and complete the proof of Theorem~\ref{T:class}. In Section~\ref{S:set}, we study the primitive obstruction set and prove the universal level $210$ criterion. Section~\ref{S:NumEx} gives an explicit example, where we explain a primitive level $30$ obstruction in terms of division field entanglements. Finally, in Section~\ref{S:ProofOfQuant}, we prove Theorem~\ref{Quanticep} by applying an effective Chebotarev theorem with avoidance.

The computations in this article were carried out using \texttt{Magma} V2.28-21 \cite{MR1484478}. The accompanying code is available in the online repository: \\
\centerline{\url{https://github.com/Rakvi6893/Vanishing-of-Koblitz-s-constant}.}

\section{Preliminaries} \label{S:Prelims}

\subsection{Galois representations of elliptic curves}

Let \(E/\Q\) be an elliptic curve. For each positive integer \(n\), let \(E[n]\) denote the subgroup of \(E(\overline{\Q})\) consisting of points whose order divides \(n\), and let \(\Q(E[n])\) denote the \(n\)-division field of \(E\), which is a finite Galois extension of $\Q$. The natural action of \(\Gal(\overline{\Q}/\Q)\) on \(E[n]\) gives rise to the \emph{mod \(n\) Galois representation}
\[
\rho_{E,n}\colon \Gal(\overline{\Q}/\Q)\longrightarrow \Aut(E[n]).
\]
By choosing a basis for the free \(\Z/n\Z\)-module \(E[n]\), we identify \(\Aut(E[n])\) with \(\GL_2(\Z/n\Z)\). We write \(G_E(n)\) for the image of \(\rho_{E,n}\) in $\GL_2(\Z/n\Z)$, which is well defined up to conjugacy. The Weil pairing identifies $\det \rho_{E,n}$ with the mod $n$ cyclotomic character. In particular, $\det G_E(n) = (\Z/n\Z)^\times$.

Choosing compatible bases as $n$ varies gives the \emph{adelic Galois representation}
\[
\rho_E\colon\Gal(\overline{\Q}/\Q)\longrightarrow
\GL_2(\widehat{\Z}),
\]
whose image we denote by $G_E$; here $\widehat{\Z}$ denotes the ring of profinite integers. If $E$ is non-CM, Serre's open image theorem \cite{MR387283} gives that $G_E$ is open in $\GL_2(\widehat{\Z})$. For such $E$, the least positive integer $N$ such that $G_E$ is the full inverse image of $G_E(N)$ under reduction modulo $N$ is called the \emph{adelic level} of $E$.

Let \(p\) be a prime of good reduction for \(E\). As in the introduction, we write \(E_p\) for the reduction of \(E\) modulo \(p\), and
\[
\#E_p(\F_p)=p+1-a_p(E),
\]
where \(a_p(E)\in \Z\) is the trace of Frobenius. If \(m\geq 1\) and \(p\nmid mN_E\), then
\[
\det \rho_{E,m}(\Frob_p)\equiv p \pmod m \quad\text{and}\quad \tr \rho_{E,m}(\Frob_p)\equiv a_p(E) \pmod m,
\]
where \(\Frob_p \in \Gal(\overline{\Q}/\Q)\) denotes a choice of arithmetic Frobenius at \(p\). Since 
\[ \det(I-M)=1-\tr(M)+\det(M)\] 
for every \(2\times 2\) matrix \(M\), it follows that
\[
\#E_p(\F_p)\equiv \det\bigl(I-\rho_{E,m}(\Frob_p)\bigr)\pmod m.
\]
Alternatively, this congruence can be derived directly using the Frobenius endomorphism. Let $\pi_p\colon E_p \to E_p$ be the Frobenius map $(x,y) \mapsto (x^p,y^p)$. Then 
$$E_p(\F_p) = \ker(1-\pi_p).$$
Since $p \nmid mN_E$, reduction induces an isomorphism $E[m] \simeq E_p[m]$ under which the restriction of $\pi_p$ to $E_p[m]$ is identified with the action of $\rho_{E,m}(\Frob_p)$ on $E[m]$; see \cite[p.~IV-5]{MR1484415}. Therefore,
$$\#E_p(\F_p) = \deg(1-\pi_p) \equiv \det(I-\rho_{E,m}(\Frob_p)) \pmod m .$$

\subsection{Congruence obstructions via Galois representations}

For $m \geq 2$, define
\begin{equation} \label{E:I1}
\mathcal I_1(m)\coloneqq \{M\in \GL_2(\Z/m\Z): \det(I-M)\notin (\Z/m\Z)^\times\} \subseteq  \GL_2(\Z/m\Z),
\end{equation}
which is invariant under conjugation. For every prime \(p\nmid mN_E\),  the preceding discussion gives that
\begin{equation}\label{E:equiv}
\gcd(\#E_p(\F_p),m)>1 \quad\Longleftrightarrow\quad \rho_{E,m}(\Frob_p)\in \mathcal I_1(m).
\end{equation}
When \(m=\ell\) is prime, a matrix in $\GL_2(\F_\ell)$ belongs to \(\mathcal I_1(\ell)\) precisely when \(1\) is an eigenvalue.

This gives the following Galois-theoretic formulation of congruence obstructions.
\begin{definition}\label{Def:GpObstruction}
Let \(m\geq 2\). We say that \(E\) has a \emph{congruence obstruction modulo} \(m\) if
\[
G_E(m)\subseteq \mathcal I_1(m).
\]
If, moreover, \(E\) has no congruence obstruction modulo any proper divisor \(n\geq 2\) of \(m\), then we say that the congruence obstruction modulo \(m\) is \emph{primitive}.
\end{definition}

By \eqref{E:equiv} and the Chebotarev density theorem, Definition~\ref{Def:GpObstruction} is equivalent to the definition given in the introduction (Definition~\ref{D:CongObs}). Indeed, if
\(
G_E(m)\subseteq\mathcal I_1(m),
\)
then \(\gcd(\#E_p(\F_p),m)>1\) for every prime \(p\nmid mN_E\). Conversely, suppose that some \(g\in G_E(m)\) lies outside \(\mathcal I_1(m)\). Since \(\mathcal I_1(m)\) is invariant under conjugation, the conjugacy class of \(g\) is disjoint from \(\mathcal I_1(m)\). By the Chebotarev density theorem, this conjugacy class occurs as the Frobenius conjugacy class for a positive density set of primes \(p \nmid m N_E\), and \eqref{E:equiv} gives that
\(
\gcd(\#E_p(\F_p),m)=1
\)
for all such primes \(p\).

We also use the same terminology for arbitrary subgroups of \(\GL_2(\Z/m\Z)\). If \(G\subseteq\GL_2(\Z/m\Z)\) and \(n\mid m\), let \(G(n)\) denote the image of \(G\) under reduction modulo \(n\). We say that \(G\) has a \emph{congruence obstruction modulo \(m\)} if
\(
G\subseteq\mathcal I_1(m),
\)
and that the obstruction is \emph{primitive} if 
\(
G(n)\not\subseteq\mathcal I_1(n)
\)
for every proper divisor \(n\geq 2\) of \(m\).

Finally, for a subgroup \(G\subseteq \GL_2(\Z/m\Z)\), we define
\begin{equation}\label{eq:F1}
\mathcal F_1(G)\coloneqq \frac{|G\cap \mathcal I_1(m)|}{|G|}.
\end{equation}
The modulus in \(\mathcal F_1\) will always be clear from the ambient group. Observe that \(G\) has a congruence obstruction modulo \(m\) if and only if \(\mathcal F_1(G)=1\). Moreover, the obstruction is primitive if and only if \(\mathcal F_1(G)=1\) and \(\mathcal F_1(G(n))<1\) for every maximal proper divisor \(n\geq 2\) of \(m\). 

\subsection{Subgroups at prime level} For every prime $\ell$, let $B_0(\ell)$ denote the Borel subgroup of upper triangular matrices in $\GL_2(\F_\ell)$. In the Cartan definitions that follow, suppose that $\ell$ is odd. The split Cartan subgroup is the subgroup consisting of diagonal matrices, denoted $C_{\rm s}(\ell)$, and its normalizer is denoted $C_{\rm s}^+(\ell)$. Fix a nonsquare $\epsilon\in\F_\ell^\times$. The nonsplit Cartan subgroup is
\[
C_{\rm ns}(\ell)
\coloneqq
\left\{
\begin{pmatrix}a&\epsilon b\\ b&a\end{pmatrix}
: a,b \in \mathbb{F}_\ell, \, (a,b)\neq(0,0)
\right\}
\]
and its normalizer is
\[
C_{\rm ns}^+(\ell)
=C_{\rm ns}(\ell)\sqcup\omega C_{\rm ns}(\ell),
\qquad
\omega=\begin{pmatrix}1&0\\0&-1\end{pmatrix}.
\]

Dickson's classification implies that, up to conjugacy, every proper subgroup of \(\GL_2(\F_\ell)\) is contained in a Borel subgroup or in the normalizer of a Cartan subgroup, has projective image isomorphic to \(A_4\), \(S_4\), or \(A_5\), or contains \(\SL_2(\F_\ell)\); see \cite[Section~2]{MR387283} or \cite[Section~3]{MR3482279}. We will sometimes use Sutherland's labels for subgroups of $\GL_2(\F_\ell)$ from \cite{MR3482279}, as well as the LMFDB labels for subgroups of \(\GL_2(\widehat{\Z})\) \cite{lmfdb}.

We now record a useful observation relating to the nonsplit Cartan subgroup.
\begin{lemma}\label{L:I1Cns}
Let $\ell$ be an odd prime.
\begin{enumerate}
\item $\mathcal I_1(\ell)\cap C_{\rm ns}(\ell)=\{I\}$.
\item If $A\in\mathcal I_1(\ell)\cap(C_{\rm ns}^+(\ell)\setminus C_{\rm ns}(\ell))$, then $\det A=-1$.
\end{enumerate}
\end{lemma}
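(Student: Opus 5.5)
Both parts reduce to a short computation with the explicit matrices in the definitions of $C_{\rm ns}(\ell)$ and $C_{\rm ns}^+(\ell)$, using the criterion noted above: $M\in\GL_2(\F_\ell)$ lies in $\mathcal I_1(\ell)$ if and only if $\det(I-M)=0$, that is, if and only if $1$ is an eigenvalue of $M$.

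For (1), take $A=\begin{pmatrix}a&\epsilon b\\ b&a\end{pmatrix}\in C_{\rm ns}(\ell)$. A direct computation gives
\[
\det(I-A)=(1-a)^2-\epsilon b^2 .
\]
Suppose $A\in\mathcal I_1(\ell)$, so this vanishes. If $b\neq 0$, then $\epsilon=\bigl((1-a)/b\bigr)^2$ is a square in $\F_\ell^\times$. This contradicts the choice of $\epsilon$ as a nonsquare. Hence $b=0$, so $(1-a)^2=0$ and $a=1$, giving $A=I$. Conversely, $I\in\mathcal I_1(\ell)$ trivially, so the intersection is exactly $\{I\}$.

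For (2), write a general element of the coset as
\[
A=\omega\begin{pmatrix}a&\epsilon b\\ b&a\end{pmatrix}=\begin{pmatrix}a&\epsilon b\\ -b&-a\end{pmatrix}.
\]
This matrix has trace $0$, and
\[
\det A=-(a^2-\epsilon b^2).
\]
For any $2\times 2$ matrix we have $\det(I-A)=1-\tr A+\det A$, so here $\det(I-A)=1+\det A$. Therefore $A\in\mathcal I_1(\ell)$ forces $\det A=-1$. Equivalently, a trace-zero matrix with eigenvalue $1$ has eigenvalues $1$ and $-1$; since $\ell$ is odd these are distinct, and their product is $-1$.

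There is no real obstacle here. The only points needing care are (i) using that $\epsilon$ is a nonsquare to force $b=0$ in (1), and (ii) noting that every element of $C_{\rm ns}^+(\ell)\setminus C_{\rm ns}(\ell)$ has the displayed trace-zero form, which holds because $\omega C_{\rm ns}(\ell)$ is the nontrivial coset.
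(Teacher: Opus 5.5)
Your proposal is correct and matches the paper's proof: the same computation $\det(I-A)=(1-a)^2-\epsilon b^2$ with the nonsquare $\epsilon$ forcing $b=0$ and $a=1$ in part (1), and the same trace-zero observation giving $\det(I-A)=1+\det A$ in part (2). Your remark that $I$ itself lies in $\mathcal I_1(\ell)$ supplies the reverse inclusion that the paper leaves implicit.
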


\begin{proof}
If a matrix
\(
\begin{psmallmatrix}a&\epsilon b\\ b&a\end{psmallmatrix}
\in C_{\rm ns}(\ell)
\)
is contained in $\mathcal I_1(\ell)$, then
\[(1-a)^2-\epsilon b^2=0.\]
If $b\neq0$, then
\(
\epsilon=((1-a)/b)^2,
\)
contradicting the fact that $\epsilon$ is a nonsquare. Hence $b=0$ and the displayed equation then gives $a=1$. This proves the first assertion.

Every element of $C_{\rm ns}^+(\ell)\setminus C_{\rm ns}(\ell)$ has the form
\(
\begin{psmallmatrix}a&\epsilon b\\-b&-a\end{psmallmatrix}
\)
and therefore has trace zero. If such an element $A$ lies in $\mathcal I_1(\ell)$, then
\[
0=\det(I-A)=1-\tr(A)+\det(A)=1+\det(A),
\]
so $\det(A)=-1$, proving the second assertion.
\end{proof}

\subsection{Fiber products}

Let $G_1$ and $G_2$ be finite groups equipped with surjective homomorphisms $\phi_i\colon G_i\twoheadrightarrow Q$ onto a common group $Q$. The \emph{fiber product} of $G_1$ and $G_2$ with respect to $(\phi_1, \phi_2)$ is the subgroup of $G_1 \times G_2$ defined by
\[
G_1\times_{(\phi_1,\phi_2)}G_2
\coloneqq
\{(g_1,g_2)\in G_1\times G_2:\phi_1(g_1)=\phi_2(g_2)\}.
\]
Goursat's lemma states that every subgroup $H\subseteq G_1\times G_2$ that projects surjectively onto both factors can be realized as a fiber product of $G_1$ and $G_2$ for suitable $Q, \phi_1,$ and $\phi_2$; see, for example, \cite[p.~75]{MR1878556}. 

We will also use the corresponding field-theoretic description. If \(L_1/K\) and \(L_2/K\) are finite Galois extensions in a common algebraic closure of $K$, then restriction gives a natural isomorphism
\[
\Gal(L_1L_2/K)
\simeq
\Gal(L_1/K)\times_{\Gal(L_1\cap L_2/K)}\Gal(L_2/K),
\]
where the two maps to \(\Gal(L_1\cap L_2/K)\) are given by restriction.

\subsection{Modular curves and prime level images} \label{S:ModCrvs}

Let \(G\subseteq\GL_2(\widehat{\Z})\) be an open subgroup with full determinant, that is, with
\(
\det(G)=\widehat{\Z}^{\times},
\)
and let \(N\) be the level of \(G\). When convenient, we identify \(G\) with its reduction modulo \(N\), viewed as a subgroup of \(\GL_2(\Z/N\Z)\). Associated to \(G\) is a smooth, projective, geometrically integral modular curve \(X_G\) over \(\Q\). The inclusion
\(
G\subseteq\GL_2(\Z/N\Z)
\)
induces a natural morphism
\[
j\colon X_G\longrightarrow X(1)\isom\mathbb P^1,
\]
called the \emph{\(j\)-map}. A \emph{cusp} of \(X_G\) is a point whose image under \(j\) is \((1:0)\).

We recall the interpretation of the non-cuspidal rational points of \(X_G\) in terms of Galois representations. Suppose first that \(-I\in G\). If \(E/\Q\) is an elliptic curve with
\(
j(E)\notin\{0,1728\},
\)
then \(G_E(N)\) is conjugate to a subgroup of \(G\) if and only if
\(
j(E)\in j(X_G(\Q)).
\)
Now suppose that \(-I\notin G\), and set $G^{\pm} = \langle G, -I \rangle$. Then $X_G$ agrees with the coarse modular curve $X_{G^{\pm}}$, compatibly with the $j$-map. In this case, containment of the mod $N$ image in \(G\) is no longer determined by the \(j\)-invariant alone: for an elliptic curve \(E/\Q\) with \(j(E)\notin\{0,1728\}\), one has
\(
j(E)\in j(X_G(\Q))
\)
if and only if some quadratic twist \(E^{(d)}/\Q\) of \(E\) has mod \(N\) image conjugate to a subgroup of \(G\); see \cite[Lemma 5.1]{MR3500996}.

When we need to compute a model of a modular curve, we use Zywina's code from \cite{ZywinaOpen}. This code uses the convention that for $-I \in G$ and $j(E) \not\in \{0, 1728\}$, the modular curve associated with  \(G\) has a rational point above $j(E)$ if and only if $G_E(N)$ is conjugate to a subgroup of the transpose
\(
G^{\intercal}=\{g^{\intercal}:g\in G\}.
\)
Consequently, to compute the modular curve associated with \(G\) in our convention, we pass \(G^{\intercal}\) to Zywina's code; see \cite[Remark~3.6]{ZywinaOpen}.

We will use the following theorem, which summarizes the known unconditional classification of mod \(\ell\) images. It combines numerous results that determine rational points on the relevant modular curves.

\begin{theorem}\label{T:SOTA}
Let \(E/\Q\) be a non-CM elliptic curve, and let \(\ell\) be a prime. Suppose that
\(
G_E(\ell)\neq\GL_2(\F_\ell).
\)
Then either \(G_E(\ell)\) is conjugate to one of the \(63\) proper subgroups listed in \cite[Table~3]{MR3482279}, or \(\ell\geq 19\) and \(G_E(\ell)\) is conjugate to \(C_{\rm ns}^+(\ell)\).
\end{theorem}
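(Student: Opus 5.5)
This is a compilation of known results, so the plan is to assemble citations organized by Dickson's classification of maximal subgroups of \(\GL_2(\F_\ell)\), rather than to give a new argument. Since \(\det G_E(\ell)=\F_\ell^\times\), the case \(\SL_2(\F_\ell)\subseteq G_E(\ell)\) forces \(G_E(\ell)=\GL_2(\F_\ell)\). So, with \(G_E(\ell)\) proper, I would split into three cases according to whether \(G_E(\ell)\) lies in:
\begin{enumerate}
\item a Borel subgroup \(B_0(\ell)\);
\item \(C_{\rm s}^+(\ell)\) or \(C_{\rm ns}^+(\ell)\);
\item an exceptional group with projective image \(A_4\), \(S_4\) or \(A_5\).
\end{enumerate}

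In the Borel case, \(E\) has a rational \(\ell\)-isogeny. Mazur's isogeny theorem, together with Kenku's completion, gives \(\ell\in\{2,3,5,7,11,13,17,37\}\). For these \(\ell\), the finitely many possible subgroups of \(B_0(\ell)\) and their occurrence are recorded in Zywina's classification of mod \(\ell\) images and in \cite[Table~3]{MR3482279}. In the split Cartan case, Bilu--Parent--Rebolledo handle \(\ell\geq 17\), and Balakrishnan et al.\ (the ``cursed curve'' paper) handle \(\ell=13\). Together with Zywina's analysis for \(\ell\leq 11\), this leaves only the table entries. In the exceptional case, Serre's argument on the image of inertia shows that an \(S_4\)-image can occur only for finitely many small \(\ell\); \(A_4\) and \(A_5\) are excluded by an elementary determinant and inertia argument. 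The surviving groups, for example at \(\ell=13\) (with Banwait--Cremona), are again listed in the table.

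For the nonsplit Cartan case with \(\ell\leq 17\), I would cite the explicit determinations of rational points on \(X_{\rm ns}^+(\ell)\) for \(\ell\le 11\), on \(X_{\rm ns}^+(13)\) (Balakrishnan et al.), and on \(X_{\rm ns}^+(17)\) (Balakrishnan--Dogra--Müller--Tuitman--Vonk, or Rouse--Sutherland--Zureick-Brown). I would also invoke Zywina's results on the proper subgroups of \(C_{\rm ns}^+(\ell)\), which for \(\ell\ge 19\) do not occur as images. What then remains is exactly the case \(G_E(\ell)=C_{\rm ns}^+(\ell)\) with \(\ell\geq 19\).

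The main obstacle is this last point: the nonsplit Cartan normalizer case for large \(\ell\) is genuinely open, which is why it appears as an exception in the statement. The delicate verification is that \cite[Table~3]{MR3482279}, read alongside the later determinations at \(\ell=13,17\) and the Zywina results for \(\ell\le 37\), really does account for every occurring proper image. In the paper I would therefore state the theorem as a summary with precise references for each case, rather than reprove anything.
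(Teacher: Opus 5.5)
Your case split by subgroup type (Borel, Cartan normalizers, exceptional) is a legitimate alternative to the paper's split by the size of \(\ell\), and it holds up until the last step. There is, however, a genuine gap at the step you attribute to Zywina: that the proper subgroups of \(C_{\rm ns}^+(\ell)\) do not occur as images for \(\ell\geq 19\). This is not what Zywina proves.

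For \(\ell\in\{19,23,29,31,37\}\), his Proposition~1.13 \cite{zywina2015possibleimagesmodell} gives only the following. Away from the Borel cases at \(37\), a non-surjective image is conjugate either to \(C_{\rm ns}^+(\ell)\) or to a specific index-\(3\) subgroup of \(C_{\rm ns}^+(\ell)\). That subgroup has surjective determinant exactly when \(\ell\equiv 2\pmod 3\), and Zywina leaves it open. Excluding it requires separate, recent input. The paper uses \cite[Theorem~1.6]{furio2025serresuniformityquestionproper}, which shows that this index-\(3\) subgroup never occurs for \(\ell>5\). For \(\ell>37\) it applies \cite[Theorem~1.5]{furio2025serresuniformityquestionproper} directly: the image there is either \(\GL_2(\F_\ell)\) or conjugate to \(C_{\rm ns}^+(\ell)\).

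Without this citation, your argument proves only a weaker statement. That statement has an extra possible exception for \(\ell\geq 19\) with \(\ell\equiv 2\pmod 3\), namely this index-\(3\) subgroup of \(C_{\rm ns}^+(\ell)\). Your account of what is open should be adjusted to match. Only the occurrence of the full normalizer \(C_{\rm ns}^+(\ell)\) itself remains open. The exclusion of its proper subgroups is a theorem, but it is Furio--Lombardo's and not a consequence of Zywina's work.

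With that reference added, the two routes agree. Your Dickson-type decomposition is essentially the unpackaged form of the results the paper cites by range of \(\ell\). It covers the Borel case via Mazur, the split Cartan case via Bilu--Parent--Rebolledo and the level-\(13\) quadratic Chabauty results, and \(A_4\)/\(A_5\) via the determinant argument. It bounds \(S_4\) via Serre's inertia argument, and settles \(\ell\le 17\) via explicit rational points on \(X_{\rm ns}^+(\ell)\).
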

\begin{proof}
For $\ell\leq11$, the result follows from the classification recorded in \cite[Section~1 and Table~3]{MR3482279}. For $\ell=13, 17$, the remaining cases are settled by the determination of the rational points on the relevant modular curves in \cite{MR3961086, MR4589060}. Thus every proper mod $\ell$ image for $\ell\leq17$ is conjugate to one of the groups listed in \cite[Table~3]{MR3482279}.

Now suppose that $\ell\geq19$. If $\ell>37$, the result follows directly from \cite[Theorem~1.5]{furio2025serresuniformityquestionproper}, which shows that $G_E(\ell)$ is either $\GL_2(\F_\ell)$ or conjugate to $C_{\rm ns}^+(\ell)$. It remains to consider
\[
\ell\in\{19,23,29,31,37\}.
\]
By \cite[Proposition~1.13]{zywina2015possibleimagesmodell}, apart from the Borel cases at level $37$, the group $G_E(\ell)$ is conjugate either to $C_{\rm ns}^+(\ell)$ or to the index $3$ subgroup defined therein. The Borel images are among the groups listed in \cite[Table~3]{MR3482279}, and \cite[Theorem~1.6]{furio2025serresuniformityquestionproper} shows that the index $3$ subgroup does not occur for any prime $\ell>5$. The result follows.
\end{proof}

In fact, it is conjectured that the \(63\) groups appearing in \cite[Table~3]{MR3482279} give the complete list of non-surjective mod \(\ell\) images as \(\ell\) varies; see \cite[Conjecture~1.1]{MR3482279}.

\subsection{Prime level congruence obstructions}

\begin{proposition}\label{P:prime-level}
Let \(E/\Q\) be a non-CM elliptic curve, and let \(\ell\) be a prime. Then \(E\) has a congruence obstruction modulo \(\ell\) if and only if
\[
\ell\in\{2,3,5,7\}
\]
and, up to conjugacy, the mod $\ell$ Galois image of $E$ is one of the groups listed in Table \ref{tab:prime-groups}.
\end{proposition}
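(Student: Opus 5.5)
The plan is to reduce the statement to a finite check over known mod $\ell$ images. By Definition~\ref{Def:GpObstruction}, $E$ has a congruence obstruction modulo $\ell$ exactly when every element of $G_E(\ell)$ has $1$ as an eigenvalue, that is, when $\mathcal F_1(G_E(\ell))=1$.

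First I dispose of the surjective and large-$\ell$ cases. If $G_E(\ell)=\GL_2(\F_\ell)$, then there are elements without eigenvalue $1$: for $\ell=2$ take an element of order $3$, and for $\ell\ge3$ take the scalar $2I$ or a generator of a nonsplit Cartan. So there is no obstruction. If $\ell\ge19$ and $G_E(\ell)$ is conjugate to $C_{\rm ns}^+(\ell)$, Lemma~\ref{L:I1Cns}(1) shows that $C_{\rm ns}(\ell)\cap\mathcal I_1(\ell)=\{I\}$, so again there is no obstruction. By Theorem~\ref{T:SOTA}, the only images left are the $63$ proper subgroups in \cite[Table~3]{MR3482279}.

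Second, I run a finite computation over those $63$ groups, computing $\mathcal F_1$ for each. The obstructed ones form Table~\ref{tab:prime-groups}, and they occur only for $\ell\in\{2,3,5,7\}$. To give a conceptual check on this list, I would also argue directly. If $G\subseteq\mathcal I_1(\ell)$, then a counting argument (Katz's theorem \cite[Theorem~2]{MR604840}, or a direct Jordan–Hölder argument for finite groups) shows that the semisimplification of $E[\ell]$ has a trivial character as a Jordan–Hölder constituent. Then $G$ is contained in a Borel subgroup, with one diagonal character trivial. Hence $E$ or its $\ell$-isogenous curve $E/C$ has a rational point of order $\ell$. Mazur's theorem \cite{MR488287} then forces $\ell\le7$. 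This rules out, for example, the Borel images at $\ell=11,13,17,37$, since their diagonal characters are nontrivial.

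Third, for the converse I check that each group in the table actually occurs as $G_E(\ell)$ for some non-CM $E/\Q$. The listed groups are taken from Sutherland's table of images that occur, so this amounts to citing his examples \cite{MR3482279}.

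The main obstacle is only bookkeeping. I have to make sure the matrix computation is done in the correct convention, with row versus column action and Borel versus transpose. I also have to include the subgroups where the trivial character sits in the lower-right entry rather than the upper-left, since both are obstructed but are not conjugate.
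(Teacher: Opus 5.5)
Your proposal is correct, but it distributes the work differently from the paper.

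\textbf{The paper's route.} Katz's theorem \cite[Theorem~2]{MR604840} and Mazur's theorem come first. An obstruction modulo $\ell$ forces some curve in the isogeny class to have a rational point of order $\ell$. This gives $\ell\le 7$ without appeal to any classification of images. Theorem~\ref{T:SOTA} is then used only for $\ell\in\{2,3,5,7\}$, to list the candidate images and test which of them lie in $\mathcal I_1(\ell)$.

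\textbf{Your route.} Your main line tests every image permitted by Theorem~\ref{T:SOTA} at every prime, and treats $\GL_2(\F_\ell)$ and $C_{\rm ns}^+(\ell)$ by hand. This is valid, and your handling of those two cases is right. The cost is that the proposition then rests on the full strength of Theorem~\ref{T:SOTA}, including the recent results used there for $\ell>37$. The paper's route does not need these.

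\textbf{Your conceptual check.} This is essentially the paper's argument, and it is the better primary route. You leave its direct version unstated, but in dimension $2$ it is short and would even remove the dependence on Katz. Suppose $G\subseteq\mathcal I_1(\ell)$.
\begin{itemize}
\item If $G$ acts reducibly, it lies in a Borel, and $G$ is the union of the kernels of its two diagonal characters. A group is never a union of two proper subgroups, so one character is trivial.
\item If $G$ acts irreducibly and $\ell\mid|G|$, then two unipotents with distinct fixed lines give $\SL_2(\F_\ell)\subseteq G$. But $\SL_2(\F_\ell)\not\subseteq\mathcal I_1(\ell)$, a contradiction.
\item If $G$ acts irreducibly and $\ell\nmid|G|$, lift to characteristic $0$. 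The eigenvalues are prime-to-$\ell$ roots of unity, which reduce injectively, so $\tr g=1+\det g$ for all $g$. Hence the representation is $\mathbf{1}\oplus\det$, contradicting irreducibility.
\end{itemize}
This also explains the shape of Table~\ref{tab:prime-groups}: its entries are exactly the Borel-type images with a trivial diagonal character.

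\textbf{A minor point.} The converse direction of the equivalence only needs each listed group to lie in $\mathcal I_1(\ell)$, and your computation already gives that. Realizability by an actual curve (your third step, and the examples column of the table) is extra information, not part of the statement.
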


\begin{proof}
By  \cite[Theorem~2]{MR604840}, $E$ has a congruence obstruction modulo $\ell$ if and only if there exists an elliptic curve $E'/\Q$ that is $\Q$-isogenous to $E$ and has a rational point of order $\ell$. By Mazur's theorem on rational torsion \cite{MR488287}, such an $E'$ can exist only for $\ell \in\{2,3,5,7\}$. For such primes $\ell$, we consider all mod $\ell$ images that occur over $\Q$ (see Theorem~\ref{T:SOTA} in Section~\ref{S:ModCrvs}). We check which of these images satisfy $H \subseteq \mathcal{I}_1(\ell)$, and give an example realizing each such image. The associated code appears in the file \texttt{primecongobstruction.m}.
\end{proof}

The table below (Table~\ref{tab:prime-groups}) gives all mod $\ell$ Galois images for non-CM elliptic curves over $\Q$ with a congruence obstruction. The group is identified using its LMFDB label, as is the example elliptic curve provided.

\begin{table}[H]
\centering
\caption{Galois images mod $\ell$ with a congruence obstruction}
\label{tab:prime-groups}
\begin{tabular}{|c|c|c|}
\hline
$\ell$  & Group & Example \\ \hline
2  & \mc{2.3.0.a.1} & \ec{14.a1} \\
2  & \mc{2.6.0.a.1}    & \ec{15.a2}  \\ 
3  & \mc{3.8.0-3.a.1.2}    & \ec{14.a4} \\ 
3 &  \mc{3.8.0-3.a.1.1} & \ec{14.a2} \\ 
3 &  \mc{3.24.0-3.a.1.1} & \ec{14.a6} \\ 
5 &  \mc{5.24.0-5.a.1.2} & \ec{38.b2} \\ 
5 &  \mc{5.24.0-5.a.2.2} & \ec{38.b1}  \\ 
5 & \mc{5.120.0-5.a.1.2} & \ec{550.j3}  \\ 
7 & \mc{7.48.0-7.a.1.2} & \ec{26.b2} \\ 
7 & \mc{7.48.0-7.a.2.2} & \ec{26.b1} \\ \hline
\end{tabular}
\end{table}

\section{Bounding the prime divisors of primitive congruence obstruction levels} \label{S:BoundLargest}

In this section, we bound the prime divisors of a primitive congruence obstruction level. We first show, using fiber product arguments, that if its largest prime divisor $\ell$ is greater than $37$, then the mod $\ell$ image  can be neither the full group nor the normalizer of a nonsplit Cartan subgroup,
contradicting Theorem~\ref{T:SOTA}. This gives a uniform bound of \(37\). We then combine this bound with known rational point results on several modular curves to prove that the largest prime divisor is at most \(13\) and cannot equal \(11\).

We begin with the full image case.

\begin{lemma} \label{L:NoFiber}
Let $\ell \geq 5$ be a prime, and let $\phi \colon \GL_2(\F_\ell) \twoheadrightarrow Q$ be a surjective homomorphism. If some fiber of $\phi$ is contained in $\mathcal{I}_1(\ell)$, then $\ell$ divides $|Q|$.
\end{lemma}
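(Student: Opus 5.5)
The plan is to prove the contrapositive: assume $\ell\nmid |Q|$, and show that no fiber of $\phi$ lies inside $\mathcal I_1(\ell)$. Put $K\coloneqq\ker\phi$, a normal subgroup of $\GL_2(\F_\ell)$. The fibers of $\phi$ are exactly the cosets $gK$. So it suffices to show that every coset $gK$ contains a matrix without eigenvalue $1$.

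\emph{Step 1: $K\supseteq \SL_2(\F_\ell)$.} Since $[\GL_2(\F_\ell):K]=|Q|$ is prime to $\ell$, the subgroup $K$ contains a Sylow $\ell$-subgroup of $\GL_2(\F_\ell)$. Because $K$ is normal, it then contains every Sylow $\ell$-subgroup, and hence every element of order $\ell$. In particular, $K$ contains all transvections. Transvections generate $\SL_2(\F_\ell)$, so $K\supseteq\SL_2(\F_\ell)$. This standard fact is the only group-theoretic input and is where most of the content lies. Alternatively, one could cite the description of normal subgroups of $\GL_2(\F_\ell)$ for $\ell\geq5$: such a subgroup is either central or contains $\SL_2(\F_\ell)$. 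A central $K$ has order prime to $\ell$, which is impossible since $\ell$ divides $|K|$.

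\emph{Step 2: describe the fibers.} Since $K\supseteq\SL_2(\F_\ell)=\ker\det$, the group $K$ is the preimage under $\det$ of the subgroup $D\coloneqq\det(K)\subseteq\F_\ell^\times$. Each coset $gK$ is therefore the set of all matrices whose determinant lies in the coset $\det(g)D$. In particular, for $d\coloneqq\det g$, the fiber $gK$ contains every matrix of determinant $d$.

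\emph{Step 3: exhibit an element outside $\mathcal I_1(\ell)$.} Fix $d\in\F_\ell^\times$ and choose $a\in\F_\ell^\times$ with $a\neq1$ and $a\neq d$. This is possible since $\ell-1\geq4>2$. Then the matrix $\operatorname{diag}(a,d/a)$ has determinant $d$, and neither of its eigenvalues $a$, $d/a$ equals $1$. So it lies in $gK$ but not in $\mathcal I_1(\ell)$.

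As an alternative for Step 3, one can use Lemma~\ref{L:I1Cns}(1). The norm map $C_{\rm ns}(\ell)\to\F_\ell^\times$ is surjective with fibers of size $\ell+1\geq2$. Hence each determinant class contains a non-identity element of $C_{\rm ns}(\ell)$, and such an element is not in $\mathcal I_1(\ell)$.

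Consequently no fiber is contained in $\mathcal I_1(\ell)$ when $\ell\nmid|Q|$, which proves the lemma. The only step requiring care is Step 1, namely that a normal subgroup of $\ell$-power-coprime index contains $\SL_2(\F_\ell)$. The Sylow-plus-transvection argument handles it directly.
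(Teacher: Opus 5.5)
Your proof is correct. It has the same skeleton as the paper's: prove the contrapositive, show $\SL_2(\F_\ell)\subseteq\ker\phi$ so that $\phi$ factors through $\det$, and then exhibit in each determinant class a matrix without eigenvalue $1$. It differs from the paper in the key group-theoretic input and in the choice of witness matrix.

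\textbf{Step 1.} The paper argues that $\phi(\SL_2(\F_\ell))$ is a quotient of $\SL_2(\F_\ell)$ whose order is prime to $\ell$. It then uses simplicity of $\PSL_2(\F_\ell)$ and perfectness of $\SL_2(\F_\ell)$ to show that every nontrivial such quotient has order divisible by $\ell$. This is where the paper uses $\ell\geq5$. Your Sylow argument runs as follows: a normal subgroup of index prime to $\ell$ contains every element of order $\ell$, hence all transvections, hence $\SL_2(\F_\ell)$. It is more elementary, needing only Sylow theory and generation of $\SL_2$ by transvections, and it is valid for every prime $\ell$.

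\textbf{Step 3.} The paper uses $M_a=\begin{psmallmatrix}0&-a\\1&1\end{psmallmatrix}$, which satisfies $\det M_a=\det(I-M_a)=a$. This works uniformly for every $a$ and every $\ell$, with no choice needed. Your $\operatorname{diag}(a,d/a)$ requires $\ell-1\geq3$, so in your argument this is the only place the hypothesis $\ell\geq5$ is used.

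Combining your Step 1 with the paper's $M_a$ gives the lemma for all primes $\ell$. Your nonsplit Cartan alternative does the same for all odd $\ell$. The paper only needs the case $\ell\geq5$, so this extra generality is not used there.
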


\begin{proof}
We prove the contrapositive. Suppose that \(\ell\) does not divide \(|Q|\). For $\ell\geq5$, the group $\PSL_2(\F_\ell)$ is simple and $\SL_2(\F_\ell)$ is perfect; see  \cite[Chapter~XIII, Theorems~8.3 and~8.4]{MR1878556}. It follows that every proper normal subgroup of $\SL_2(\F_\ell)$ is contained in its center. In particular, every nontrivial quotient of $\SL_2(\F_\ell)$ has order divisible by $\ell$.

Now \(\phi(\SL_2(\F_\ell))\) is isomorphic to both a quotient of \(\SL_2(\F_\ell)\) and a subgroup of \(Q\). Since $\ell \nmid |Q|$, it follows that \(\phi(\SL_2(\F_\ell))\) is trivial. Hence
\[
        \SL_2(\F_\ell)\subseteq\ker\phi.
\]
Thus \(\phi\) factors through the determinant map \(\det\colon \GL_2(\F_\ell)\to \F_\ell^\times\). As such, there exists a surjective homomorphism \(\upsilon\colon\F_\ell^\times\twoheadrightarrow Q\) such that \(\phi=\upsilon\circ\det\).

Now take an arbitrary fiber \(\phi^{-1}(\gamma)\), where \(\gamma\in Q\). Since \(\upsilon\) is surjective, there exists \(a\in\F_\ell^\times\) such that \(\upsilon(a)=\gamma\). For the matrix
\(
        M_a\coloneqq
        \begin{psmallmatrix}
                0 & -a\\
                1 & 1
        \end{psmallmatrix},
\)
we have \(\det(M_a)=a\). Thus \(M_a\in\GL_2(\F_\ell)\), and
\[
        \phi(M_a)=\upsilon(\det M_a)=\upsilon(a)=\gamma.
\]
On the other hand, \(\det(I-M_a)=a\), so \(M_a\notin\mathcal I_1(\ell)\). Thus every fiber of \(\phi\) contains an element outside \(\mathcal I_1(\ell)\).
\end{proof}

\begin{lemma} \label{L:remove-full-prime}
Let $E/\Q$ be an elliptic curve that has a primitive congruence obstruction modulo a squarefree composite integer $m \geq 2$. Let $\ell\ge 5$ be a prime divisor of $m$, and write $m = n \ell$. If $G_E(\ell)=\GL_2(\F_\ell)$, then $\ell$ divides $|G_E(n)|$.
\end{lemma}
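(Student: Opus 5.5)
Since $m=n\ell$ is squarefree, $\gcd(n,\ell)=1$, and the Chinese remainder theorem identifies $\GL_2(\Z/m\Z)$ with $\GL_2(\Z/n\Z)\times\GL_2(\F_\ell)$. The plan is to write $G_E(m)$ as a fiber product and apply Lemma~\ref{L:NoFiber} to the $\GL_2(\F_\ell)$ side. Under this identification, $G_E(m)$ is a subgroup of $G_E(n)\times G_E(\ell)$ surjecting onto both factors. By Goursat's lemma, there are a group $Q$ and surjections $\psi\colon G_E(n)\twoheadrightarrow Q$ and $\phi\colon G_E(\ell)=\GL_2(\F_\ell)\twoheadrightarrow Q$ such that
\[
G_E(m)=\{(g,h): \psi(g)=\phi(h)\}.
\]
Because $\psi$ is a surjection from $G_E(n)$, $|Q|$ divides $|G_E(n)|$. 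It therefore suffices to show that $\ell$ divides $|Q|$.

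Next, describe $\mathcal I_1(m)$ through this product decomposition. Since $\Z/m\Z\simeq \Z/n\Z\times\F_\ell$, we have $\det(I-(g,h))=(\det(I-g),\det(I-h))$. This is a unit if and only if both coordinates are units. Hence $(g,h)\in\mathcal I_1(m)$ if and only if $g\in\mathcal I_1(n)$ or $h\in\mathcal I_1(\ell)$.

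Now use primitivity. Since $n\ge 2$ is a proper divisor of $m$ (because $m$ is composite), $G_E(n)\not\subseteq\mathcal I_1(n)$. Choose $g_0\in G_E(n)\setminus\mathcal I_1(n)$ and set $\gamma=\psi(g_0)$. For every $h\in\phi^{-1}(\gamma)$, the pair $(g_0,h)$ lies in $G_E(m)\subseteq\mathcal I_1(m)$. Since $g_0\notin\mathcal I_1(n)$, it follows that $h\in\mathcal I_1(\ell)$. So the whole fiber $\phi^{-1}(\gamma)$ is contained in $\mathcal I_1(\ell)$, and Lemma~\ref{L:NoFiber} (valid because $\ell\ge5$) gives $\ell\mid |Q|$. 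Hence $\ell\mid|G_E(n)|$.

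No step is a serious obstacle. The only care needed is the bookkeeping in the CRT identification: checking that $\det(I-M)$ decomposes coordinatewise, and that the reduction of $G_E(m)$ modulo $n$ and modulo $\ell$ recovers $G_E(n)$ and $G_E(\ell)$. The argument uses primitivity only at the single divisor $n$.
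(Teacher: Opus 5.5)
Your proposal is correct and follows essentially the same argument as the paper. You use Goursat's lemma, pick $g_0 \in G_E(n)\setminus\mathcal I_1(n)$ by primitivity, show the fiber $\phi^{-1}(\psi(g_0))$ lies in $\mathcal I_1(\ell)$, and invoke Lemma~\ref{L:NoFiber}. The only difference is that you argue directly ($\ell \mid |Q|$ and $|Q|$ divides $|G_E(n)|$), whereas the paper phrases it as a contradiction from assuming $\ell\nmid|G_E(n)|$.
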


\begin{proof}
Seeking a contradiction, suppose that $\ell\nmid |G_E(n)|$.  Since the obstruction modulo $m$ is primitive, $E$ does not have a congruence obstruction modulo $n$. Hence there exists a matrix $A_n\in G_E(n)\setminus\mathcal I_1(n)$. By Goursat's lemma, we may write
\[
    G_E(m)=\GL_2(\F_\ell)\times_{(\phi,\psi)}G_E(n)
\]
for some surjective homomorphisms $\phi \colon \GL_2(\F_\ell) \to Q$ and $\psi \colon G_E(n) \to Q$, where $Q$ is some finite group. Since $Q$ is isomorphic to a quotient of $G_E(n)$, our assumption that $\ell\nmid |G_E(n)|$ implies $\ell\nmid |Q|$.

Since $A_n\in G_E(n)\setminus\mathcal I_1(n)$, the fiber $\phi^{-1}(\psi(A_n))$ must be contained in $\mathcal I_1(\ell)$. Indeed, if $A_\ell\in\phi^{-1}(\psi(A_n))$, then $(A_\ell,A_n)\in G_E(m)\subseteq\mathcal I_1(m)$, and the Chinese remainder theorem implies that $A_\ell\in\mathcal I_1(\ell)$. This contradicts Lemma~\ref{L:NoFiber}.
\end{proof}

\begin{corollary} \label{C:full-prime-support}
Let \(E/\Q\) be an elliptic curve with a primitive congruence obstruction modulo a squarefree composite integer \(m\), and let \(\ell\geq 5\) be a prime divisor of \(m\). If \(G_E(\ell)=\GL_2(\F_\ell)\), then there is a prime \(p>\ell\) dividing \(m\) such that \(\ell\) divides \(|G_E(p)|\). In particular, if \(\ell\) is the largest prime divisor of \(m\), then \(G_E(\ell)\neq\GL_2(\F_\ell)\).
\end{corollary}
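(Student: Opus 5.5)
The plan is to apply Lemma~\ref{L:remove-full-prime} and then use the multiplicativity of $|G_E(n)|$ over the primes dividing $n$. Note that $m$ is squarefree, as the paper does for primitive levels; if not, we pass to $\operatorname{rad}(m)$, which has the same prime divisors. Write $m=n\ell$. Lemma~\ref{L:remove-full-prime} applies because $m$ is a squarefree composite with $\ell\ge5$ and $G_E(\ell)=\GL_2(\F_\ell)$. It gives $\ell \mid |G_E(n)|$.

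The key step is to move this divisibility to a single prime factor of $n$. Since $n$ is squarefree, the Chinese remainder theorem identifies $G_E(n)$ with a subgroup of $\prod_{p\mid n}G_E(p)$. Hence $|G_E(n)|$ divides $\prod_{p\mid n}|G_E(p)|$. As $\ell$ is prime, $\ell$ divides $|G_E(p)|$ for some prime $p\mid n$, and so $p\mid m$ with $p\neq\ell$.

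It remains to show $p>\ell$. We have $G_E(p)\subseteq\GL_2(\F_p)$, and $|\GL_2(\F_p)|=p(p-1)^2(p+1)$. If $p<\ell$, then $p$, $p-1$, and $p+1$ are all at most $\ell$. Equality $p+1=\ell$ would force $\ell=3$ (with $p=2$), which is excluded by $\ell\ge5$. So each factor of $|\GL_2(\F_p)|$ is strictly less than $\ell$, and $\ell$ cannot divide $|\GL_2(\F_p)|$, let alone $|G_E(p)|$. Therefore $p>\ell$.

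The ``in particular'' clause follows immediately: if $\ell$ is the largest prime divisor of $m$, no prime $p>\ell$ divides $m$, so $G_E(\ell)\neq\GL_2(\F_\ell)$. The only point requiring care is the comparison of $\ell$ with the factors $p-1$, $p$, $p+1$, where the boundary case $p+1=\ell$ is exactly what the hypothesis $\ell\ge5$ rules out. Otherwise the argument is routine.
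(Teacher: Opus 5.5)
Your proposal is correct and follows the paper's proof step for step. Lemma~\ref{L:remove-full-prime} gives $\ell\mid|G_E(n)|$, the Chinese remainder theorem embedding $G_E(n)\subseteq\prod_{p\mid n}G_E(p)$ moves this to a single $|G_E(p)|$, and the factorization $|\GL_2(\F_p)|=p(p-1)^2(p+1)$, together with the fact that $p+1=\ell$ is impossible when $\ell\geq 5$, forces $p>\ell$. Your aside about passing to $\operatorname{rad}(m)$ is unnecessary, since squarefreeness is already a hypothesis of the statement.
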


\begin{proof}
Write \(m=n\ell\). By Lemma~\ref{L:remove-full-prime}, we have that \(\ell\) divides \(|G_E(n)|\). Since \(n\) is squarefree, under the isomorphism of the Chinese remainder theorem,
\[
G_E(n)\subseteq\prod_{p\mid n}G_E(p).
\]
It follows that \(\ell\) divides \(|G_E(p)|\) for some prime \(p\) dividing \(n\).

We claim that \(p>\ell\). Indeed, if \(p<\ell\), then $|G_E(p)|$ divides
\[
|\GL_2(\F_p)|=p(p-1)^2(p+1).
\]
Since \(\ell\geq 5\), we have \(p+1<\ell\), as equality would imply that \(p=\ell-1\) is even and greater than \(2\). Hence \(\ell\) divides none of \(p\), \(p-1\), or \(p+1\), contradicting that \(\ell\) divides \(|G_E(p)|\).
\end{proof}

We next treat the case of the normalizer of the nonsplit Cartan.

\begin{lemma} \label{L:Nn-fiber-det}
Let \(\ell\geq 5\) be a prime, and let
\(
        \phi \colon C_{\rm ns}^+(\ell)\twoheadrightarrow Q
\)
be a surjective homomorphism. If some fiber of \(\phi\) is contained in
\(\mathcal I_1(\ell)\), then
\(
        \ker\phi\subseteq \SL_2(\F_\ell).
\)
\end{lemma}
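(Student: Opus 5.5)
The plan is to exploit the very small intersection of $\mathcal I_1(\ell)$ with $C_{\rm ns}^+(\ell)$ described in Lemma~\ref{L:I1Cns}. Set $K=\ker\phi$, which is a normal subgroup of $C_{\rm ns}^+(\ell)$. Every fiber of $\phi$ is a coset $gK$. By Lemma~\ref{L:I1Cns},
\[
\mathcal I_1(\ell)\cap C_{\rm ns}^+(\ell)\subseteq\{I\}\cup\{A\in \omega C_{\rm ns}(\ell):\det A=-1\}.
\]
So suppose $gK\subseteq\mathcal I_1(\ell)$ for some $g$. I will show $\det(K)=1$ by splitting according to whether $g\in C_{\rm ns}(\ell)$ and whether $K\subseteq C_{\rm ns}(\ell)$. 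The key observation is that $gK\cap C_{\rm ns}(\ell)$ is either empty or a coset of $K\cap C_{\rm ns}(\ell)$. The latter has $|K\cap C_{\rm ns}(\ell)|$ elements, and these must all equal $I$.

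\emph{Case $K\subseteq C_{\rm ns}(\ell)$.} If $g\in C_{\rm ns}(\ell)$, then $gK\subseteq C_{\rm ns}(\ell)\cap\mathcal I_1(\ell)=\{I\}$, so $K$ is trivial. If $g\in\omega C_{\rm ns}(\ell)$, then every element of $gK$ has determinant $-1$. Hence $\det g\cdot\det k=-1=\det g$ for all $k\in K$, so $\det K=1$. In both subcases $K\subseteq\SL_2(\F_\ell)$.

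\emph{Case $K\not\subseteq C_{\rm ns}(\ell)$.} Then $gK$ meets $C_{\rm ns}(\ell)$ in a coset of $K\cap C_{\rm ns}(\ell)$, and that coset must be contained in $\{I\}$. This forces $K\cap C_{\rm ns}(\ell)=\{I\}$. Since $C_{\rm ns}(\ell)$ has index $2$, it follows that $K=\{I,k\}$ with $k\in\omega C_{\rm ns}(\ell)$ of order $2$.

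I expect this case to be the main step, and the plan is to rule it out using normality. Write $k=\omega c$ with $c\in C_{\rm ns}(\ell)$, identified with an element of $\F_{\ell^2}^\times$. Conjugation by $\omega$ acts on $C_{\rm ns}(\ell)$ as the Frobenius $c\mapsto\bar c$. For $d\in C_{\rm ns}(\ell)$ one computes
\[
d(\omega c)d^{-1}=\omega\,c\,\bar d d^{-1}.
\]
As $d$ varies, $\bar d/d$ runs over all norm-one elements of $\F_{\ell^2}^\times$, and there are $\ell+1>1$ of these. So $k$ has more than one conjugate, and $K=\{I,k\}$ is not normal, a contradiction.

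Since this case is impossible, $K\subseteq C_{\rm ns}(\ell)$ always holds, and the first case gives $\ker\phi\subseteq\SL_2(\F_\ell)$.
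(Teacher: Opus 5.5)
Your proof is correct and uses the same ingredients as the paper. Lemma~\ref{L:I1Cns} forces $\det=-1$ on a fiber lying in $\omega C_{\rm ns}(\ell)$, and the fact that no element of $\omega C_{\rm ns}(\ell)$ centralizes $C_{\rm ns}(\ell)$ rules out a nontrivial normal subgroup meeting $C_{\rm ns}(\ell)$ trivially. You only reorganize: you split on whether $\ker\phi\subseteq C_{\rm ns}(\ell)$ rather than on whether $I$ lies in the fiber (your second case is a subcase of the paper's case $I\in g\ker\phi$), and you verify the centralizer fact by the explicit Frobenius conjugation computation instead of citing it.
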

\begin{proof}
We first note that no nontrivial normal subgroup of \(C_{\rm ns}^+(\ell)\)
is contained in \(\mathcal I_1(\ell)\).  Indeed, suppose that
\(M\) is a normal subgroup of \(C_{\rm ns}^+(\ell)\) and that \(M\subseteq\mathcal I_1(\ell)\). Then, by Lemma~\ref{L:I1Cns},
\[
        M\cap C_{\rm ns}(\ell)=\{I\}.
\]
If \(x\in M\setminus\{I\}\), then \(x\in C_{\rm ns}^+(\ell)\setminus C_{\rm ns}(\ell)\).
For every \(c\in C_{\rm ns}(\ell)\), we have
\[
        cxc^{-1}x^{-1} \in M\cap C_{\rm ns}(\ell)=\{I\},
\]
since both \(M\) and \(C_{\rm ns}(\ell)\) are normal in \(C_{\rm ns}^+(\ell)\). Thus \(x\) centralizes
\(C_{\rm ns}(\ell)\), but the
centralizer of \(C_{\rm ns}(\ell)\) in \(\GL_2(\F_\ell)\) is \(C_{\rm ns}(\ell)\) as noted in \cite[\S~2.1]{Zywina2011}. This is a contradiction, so \(M=\{I\}\).

Write $N \coloneqq \ker \phi$, and let \(gN\) be a fiber of \(\phi\) contained in \(\mathcal I_1(\ell)\).
If \(I\in gN\), then \(gN=N\), so \(N\subseteq\mathcal I_1(\ell)\), and the
preceding paragraph gives \(N=\{I\} \subseteq \SL_2(\F_\ell)\).

We may therefore assume that \(I\notin gN\).  Since  \(gN \subseteq C_{\rm ns}^+(\ell)\cap\mathcal I_1(\ell)\), every element of $gN$ lies in \(C_{\rm ns}^+(\ell) \setminus C_{\rm ns}(\ell)\) by Lemma~\ref{L:I1Cns}. Thus we have \(\det g=-1\) and \(\det(gn)=-1\) for every \(n\in N\). It follows that \(\det n=1\) for all $n \in N$, so \(N\subseteq\SL_2(\F_\ell)\).
\end{proof}

We now give a variant of \cite[Proposition 1.6]{MR4588927}.

\begin{lemma} \label{L:NoCyc}
Let \(E/\Q\) be an elliptic curve, let \(\ell\geq 11\) be a prime, and let \(n\geq 1\) be an integer such that \(\ell \nmid n\). If \(E\) has potentially good reduction at \(\ell\), then
\[
        \Q(\zeta_\ell)\not\subseteq \Q(E[n]),
\]
where $\zeta_\ell$ is a primitive $\ell$-th root of unity.
\end{lemma}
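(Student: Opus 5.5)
We argue by contradiction, comparing ramification at \(\ell\) in the two fields. Suppose \(\Q(\zeta_\ell)\subseteq\Q(E[n])\). The extension \(\Q(\zeta_\ell)/\Q\) is totally ramified at \(\ell\), with inertia group the full Galois group of order \(\ell-1\). So the inertia subgroup \(I_\ell\subseteq\Gal(\overline{\Q}/\Q)\) at a prime above \(\ell\) must act on \(\Q(E[n])\) through a group of order divisible by \(\ell-1\). In fact it must surject onto \(\Gal(\Q(\zeta_\ell)/\Q)\simeq(\Z/\ell\Z)^\times\), which is cyclic of order \(\ell-1\). The goal is to show that, under potentially good reduction and \(\ell\nmid n\), the image \(\rho_{E,n}(I_\ell)\) is too small for this.

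The key input is the Serre--Tate (Néron--Ogg--Shafarevich) description of inertia for potentially good reduction. Since \(\ell\nmid n\) and \(E\) has potentially good reduction at \(\ell\), there is a finite extension \(K/\Q_\ell^{\mathrm{ur}}\) over which \(E\) acquires good reduction. Inertia of \(K\) then acts trivially on \(E[n]\). Hence \(\rho_{E,n}(I_\ell)\) is a quotient of the finite group \(\Phi_\ell=\Gal(K/\Q_\ell^{\mathrm{ur}})\), where we take \(K\) minimal. By Serre--Tate, this group is independent of \(n\) (for \(\ell\nmid n\)) and is isomorphic to a subgroup of \(\SL_2(\Z)\)-type finite automorphism groups. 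Concretely, for \(\ell\geq5\) the group \(\Phi_\ell\) is cyclic of order \(e\in\{1,2,3,4,6\}\), with \(e=12/\gcd(12,v_\ell(\Delta))\) in the usual way. The reason is that wild ramification is trivial for \(\ell\geq5\), so the tame quotient is cyclic, and its order divides \(24\) while also being the order of an element of \(\Aut\) of the reduced curve.

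It then follows that the image of \(I_\ell\) in \(\Gal(\Q(E[n])/\Q)\) has order dividing \(6\) or \(4\), and in any case at most \(6\). Any quotient of it, in particular its image in \(\Gal(\Q(\zeta_\ell)/\Q)\), also has order at most \(6\). But \(\ell\geq11\) gives \(\ell-1\geq10>6\), a contradiction.

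The main obstacle is stating the inertia bound cleanly. Two facts need citing: the Serre--Tate criterion that inertia at \(\ell\) acts on \(E[n]\) through a finite quotient independent of \(n\), and the classification of the possible orders \(\{1,2,3,4,6\}\) for \(\ell\geq5\). This is exactly where \(\ell\geq 11\) enters, since \(\ell-1\) must exceed \(6\); for \(\ell=7\) the order \(6\) case would not be excluded. I would cite Serre--Tate's ``Good reduction of abelian varieties'' and the argument in \cite[Proposition~1.6]{MR4588927}. I would also note the alternative phrasing: \(\Q(E[n])\) has ramification index at \(\ell\) dividing \(\lcm(4,6)=12\)-type values at most \(6\), while \(\Q(\zeta_\ell)\) has ramification index \(\ell-1\), and ramification indices are multiplicative in towers.
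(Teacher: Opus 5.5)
Your proposal is correct and takes essentially the same route as the paper. Both arguments use potentially good reduction together with N\'eron--Ogg--Shafarevich (valid since $\ell\nmid n$) to bound the ramification of $\ell$ in $\Q(E[n])$ by $6$, and then compare this with the total ramification of index $\ell-1\geq 10$ in $\Q(\zeta_\ell)$. You phrase the bound through the inertia image $\Phi_\ell$, cyclic of order in $\{1,2,3,4,6\}$ for $\ell\geq 5$; the paper phrases it through a good-reduction extension with $e(\lambda/\ell)\leq 6$ and multiplicativity of ramification indices.
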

\begin{proof}
By \cite[Section~5.6]{MR387283}, after a finite base change \(K/\Q\), the curve \(E\) attains good reduction at a prime \(\lambda\) of \(K\) above \(\ell\), with ramification index \(e(\lambda/\ell)\leq 6\). Since \(\ell\nmid n\), the N\'eron--Ogg--Shafarevich criterion implies that \(K(E[n])/K\) is unramified above \(\lambda\). Choose a prime \(\mu\) of \(K(E[n])\) above \(\lambda\), and let \(\nu\) be its restriction to \(\Q(E[n])\). Then
\[
e(\nu/\ell) \leq e(\mu/\ell) = e(\mu/\lambda)e(\lambda/\ell) \leq 6.
\]
On the other hand, \(\ell\) is totally ramified in \(\Q(\zeta_\ell)/\Q\), with ramification index \(\ell-1\). Thus, if \(\Q(\zeta_\ell)\subseteq\Q(E[n])\), then the ramification index of every prime of \(\Q(E[n])\) above \(\ell\) is divisible by \(\ell-1\). This would give \(\ell-1\leq 6\), contradicting \(\ell\geq 11\).
\end{proof}

\begin{proposition} \label{P:exclude-nonsplit-prime}
Let \(E/\Q\) be an elliptic curve with a primitive congruence obstruction modulo a squarefree composite integer \(m\), and let \(\ell\geq 11\) be a prime divisor of \(m\). Then \(G_E(\ell)\) is not conjugate to \(C_{\rm ns}^+(\ell)\).
\end{proposition}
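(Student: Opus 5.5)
Suppose for contradiction that $G_E(\ell)=C_{\rm ns}^+(\ell)$. Write $m=n\ell$ with $\ell\nmid n$. By Goursat's lemma, $G_E(m)=C_{\rm ns}^+(\ell)\times_{(\phi,\psi)}G_E(n)$ for surjections $\phi\colon C_{\rm ns}^+(\ell)\twoheadrightarrow Q$ and $\psi\colon G_E(n)\twoheadrightarrow Q$.

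Primitivity gives some $A_n\in G_E(n)\setminus\mathcal I_1(n)$. Exactly as in the proof of Lemma~\ref{L:remove-full-prime}, the Chinese remainder theorem shows that the fiber $\phi^{-1}(\psi(A_n))$ lies in $\mathcal I_1(\ell)$. Lemma~\ref{L:Nn-fiber-det} then gives $\ker\phi\subseteq\SL_2(\F_\ell)$.

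Next I would translate this into fields. Let $L=\Q(E[\ell])\cap\Q(E[n])$, so $\Gal(L/\Q)\simeq Q$ and $\Gal(\Q(E[\ell])/L)=\ker\phi$. The fixed field of $\SL_2(\F_\ell)\cap G_E(\ell)$ inside $\Q(E[\ell])$ is $\Q(\zeta_\ell)$, by the Weil pairing. Since $\ker\phi\subseteq\SL_2(\F_\ell)$, it follows that $\Q(\zeta_\ell)\subseteq L\subseteq\Q(E[n])$.

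To get a contradiction from Lemma~\ref{L:NoCyc}, I need $E$ to have potentially good reduction at $\ell$. This is the step needing care. If $E$ had potentially multiplicative reduction at $\ell$, Tate curve theory says the inertia group at $\ell$ acting on $E[\ell]$ contains, up to a quadratic character, either a nontrivial unipotent element or an element of order $\ell$. This happens unless $\ell\mid v_\ell(j)$, so I would split into two cases.
\begin{itemize}
\item If $\ell\nmid v_\ell(j)$, the image contains an element of order $\ell$. But $|C_{\rm ns}^+(\ell)|=2(\ell^2-1)$ is prime to $\ell$, a contradiction.
\item If $\ell\mid v_\ell(j)$, inertia acts through a character of the form $\chi\cdot(\text{cyclotomic},1)$ up to twist. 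Such a group is diagonalizable, lying in a split torus. Its image has order divisible by $(\ell-1)/\gcd$, which should not fit inside $C_{\rm ns}^+(\ell)$. Indeed, the elements of $C_{\rm ns}^+(\ell)$ with $\F_\ell$-rational distinct eigenvalues are the order-dividing-$2(\ell-1)$ elements outside $C_{\rm ns}(\ell)$, which have trace zero.
\end{itemize}
Alternatively, I could simply cite the standard fact that an elliptic curve whose mod $\ell$ image lies in $C_{\rm ns}^+(\ell)$ with $\ell\geq 5$ has potentially good reduction at $\ell$ (Serre / Momose style). I expect this to be the main obstacle; I would cite it if it is available, and otherwise use the inertia analysis above.

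Once potentially good reduction at $\ell$ is established, Lemma~\ref{L:NoCyc} with $\ell\geq 11$ and $\ell\nmid n$ gives $\Q(\zeta_\ell)\not\subseteq\Q(E[n])$. This contradicts the inclusion obtained above and completes the proof.
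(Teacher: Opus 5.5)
Your proposal is correct and follows the paper's proof step for step: Goursat's lemma, the Chinese remainder theorem fiber argument, Lemma~\ref{L:Nn-fiber-det}, the Weil pairing giving $\Q(\zeta_\ell)\subseteq\Q(E[n])$, and a contradiction with Lemma~\ref{L:NoCyc}, where the paper obtains potentially good reduction at $\ell$ by citing \cite[Proposition~2.2]{MR3885140}, which is your first option. Your fallback inertia sketch also works, though the case split is unnecessary and the order count is not decisive; the real point is that some inertia element acts with eigenvalues $sa$ and $s$, where $s=\pm1$ and $a$ generates $\F_\ell^\times$, and this is impossible in $C_{\rm ns}^+(\ell)$, because elements of $C_{\rm ns}(\ell)$ with $\F_\ell$-rational eigenvalues are scalar and elements outside $C_{\rm ns}(\ell)$ have trace zero, which would force $a=-1$, impossible for $\ell\geq 5$.
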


\begin{proof}
Write \(m=n\ell\), and suppose that \(G_E(\ell)\) is conjugate to \(C_{\rm ns}^+(\ell)\). After changing the basis of \(E[\ell]\), we may assume that \(G_E(\ell)=C_{\rm ns}^+(\ell)\). By Goursat's lemma, there are surjective homomorphisms \(\phi\colon G_E(\ell)\twoheadrightarrow Q\) and \(\psi\colon G_E(n)\twoheadrightarrow Q\) such that
\[
G_E(m)=G_E(\ell)\times_{(\phi,\psi)}G_E(n).
\]

Since the congruence obstruction modulo \(m\) is primitive, \(E\) does not have a congruence obstruction modulo \(n\). Hence we may choose \(A_n\in G_E(n)\setminus\mathcal I_1(n)\). Set \(\gamma=\psi(A_n)\). We claim that \(\phi^{-1}(\gamma)\subseteq\mathcal I_1(\ell)\). Indeed, if \(A_\ell\in\phi^{-1}(\gamma)\), then \((A_\ell,A_n)\in G_E(m)\subseteq\mathcal I_1(m)\). Since \(A_n\notin\mathcal I_1(n)\), the Chinese remainder theorem implies that \(A_\ell\in\mathcal I_1(\ell)\).

It follows from Lemma~\ref{L:Nn-fiber-det} that \(\ker\phi\subseteq\SL_2(\F_\ell)\). By the Weil pairing,
\[
\Q(E[\ell])^{G_E(\ell)\cap\SL_2(\F_\ell)}=\Q(\zeta_\ell).
\]
The field-theoretic interpretation of Goursat's lemma therefore gives
\[
\Q(\zeta_\ell)
\subseteq \Q(E[\ell])^{\ker\phi}
=\Q(E[\ell])\cap\Q(E[n])
\subseteq\Q(E[n]).
\]
Since \(G_E(\ell)\) is contained in the normalizer of a nonsplit Cartan subgroup, \(E\) has potentially good reduction at \(\ell\) by \cite[Proposition 2.2]{MR3885140}. This contradicts Lemma~\ref{L:NoCyc}.
\end{proof}

We can now obtain a uniform bound.

\begin{proposition} \label{P:sqfree}
Let \(E/\Q\) be a non-CM elliptic curve, and let \(m\geq 2\) be squarefree. If \(E\) has a primitive congruence obstruction modulo \(m\), then every prime divisor of \(m\) is at most \(37\).
\end{proposition}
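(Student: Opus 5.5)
We argue by contradiction, working with the largest prime divisor of $m$. Suppose $E$ has a primitive congruence obstruction modulo $m$, and let $\ell$ be the largest prime divisor of $m$. Since every prime divisor of $m$ is at most $\ell$, it suffices to show $\ell\le 37$. Suppose instead that $\ell>37$.

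If $m=\ell$ is prime, Proposition~\ref{P:prime-level} gives $\ell\in\{2,3,5,7\}$, a contradiction. So we may assume $m$ is composite. It is squarefree by hypothesis, with $\ell\ge 41$.

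By Theorem~\ref{T:SOTA}, since $\ell>37$, either $G_E(\ell)=\GL_2(\F_\ell)$ or $G_E(\ell)$ is conjugate to $C_{\rm ns}^+(\ell)$. Here we use that none of the $63$ groups in \cite[Table~3]{MR3482279} occur at a prime larger than $37$; the largest prime appearing there is $37$, from the Borel images. We rule out each case in turn.

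\emph{Full image.} Corollary~\ref{C:full-prime-support} applies because $\ell\geq 5$ and $\ell$ is the largest prime divisor of $m$. It gives $G_E(\ell)\neq\GL_2(\F_\ell)$, a contradiction.

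\emph{Nonsplit Cartan normalizer.} Proposition~\ref{P:exclude-nonsplit-prime} applies because $\ell\geq 11$. It shows $G_E(\ell)$ is not conjugate to $C_{\rm ns}^+(\ell)$, again a contradiction.

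Hence $\ell\le 37$, and so every prime divisor of $m$ is at most $37$.

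The argument is essentially bookkeeping, since the real work is done in Lemma~\ref{L:remove-full-prime} and Proposition~\ref{P:exclude-nonsplit-prime}. The only delicate point is the precise reading of Theorem~\ref{T:SOTA}: we need that no group in \cite[Table~3]{MR3482279} has level exceeding $37$, so that for $\ell>37$ only the two cases above remain.
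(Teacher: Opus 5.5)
Your proposal is correct and follows essentially the same route as the paper's proof. You dispose of the prime case via Proposition~\ref{P:prime-level}, use Theorem~\ref{T:SOTA} to reduce to the full image or $C_{\rm ns}^+(\ell)$ when $\ell>37$, and rule these out by Corollary~\ref{C:full-prime-support} and Proposition~\ref{P:exclude-nonsplit-prime}; your remark that the largest prime in \cite[Table~3]{MR3482279} is $37$ spells out what the paper leaves implicit.
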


\begin{proof}
If $m$ is prime, then $m \leq 7$ by Proposition~\ref{P:prime-level}. We may therefore assume that \(m\) is composite. Let \(\ell\) be the largest prime divisor of \(m\), and suppose, for a contradiction, that \(\ell>37\). By Theorem~\ref{T:SOTA}, either \(G_E(\ell)=\GL_2(\F_\ell)\), or \(G_E(\ell)\) is conjugate to \(C_{\rm ns}^+(\ell)\). The first case is ruled out by Corollary~\ref{C:full-prime-support}, and the second by Proposition~\ref{P:exclude-nonsplit-prime}. Therefore \(\ell\leq 37\).
\end{proof}

Proposition~\ref{P:sqfree} gives a uniform bound on the prime divisors of a primitive congruence obstruction level. We now sharpen this bound, which is necessary for the finite group-theoretic search. We begin with a criterion that allows us to rule out all quadratic twists of an elliptic curve at once.

\begin{proposition} \label{P:Twist}
Let \(E/\Q\) be a non-CM elliptic curve, and let \(N\) be a positive multiple of the
adelic level of \(E\). Suppose that there exists a matrix \(A\in G_E(N)\) such
that
\begin{equation} \label{E:lvlN}
            \det(I-A)\in(\Z/N\Z)^\times
        \quad\text{and}\quad
        \det(I+A)\in(\Z/N\Z)^\times .
\end{equation}
Then no quadratic twist of \(E\) has a congruence obstruction modulo any integer \(m\geq 2\).
\end{proposition}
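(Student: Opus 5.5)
Fix a quadratic twist $E^{(d)}$ of $E$, with $d$ possibly $1$, and an integer $m\geq 2$. The plan is to produce an element of $G_{E^{(d)}}(m)$ lying outside $\mathcal I_1(m)$; equivalently, to find infinitely many primes $p$ with $\gcd(\#E^{(d)}_p(\F_p),m)=1$.

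The starting point is the standard description of the twist's image. Let $\chi_d\colon \Gal(\overline{\Q}/\Q)\to\{\pm1\}$ be the quadratic character of $\Q(\sqrt d)$. After choosing compatible bases, $\rho_{E^{(d)}}=\chi_d\cdot\rho_E$. Consequently $G_{E^{(d)}}(k)\subseteq \pm G_E(k)$ for every $k$, and each element of $G_{E^{(d)}}(k)$ has the form $\chi_d(\sigma)\rho_{E,k}(\sigma)$.

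The first step reduces to a single large level. Set $M=\lcm(N,m)$ and lift $A$ to some $\tilde A\in G_E(M)$. Because $N$ is a multiple of the adelic level, $G_E(M)$ is the full preimage of $G_E(N)$, so this lift exists. Since $\det(I\mp \tilde A)$ reduces modulo $N$ to a unit, it is a unit modulo every prime dividing $N$. Primes dividing $M$ but not $N$ are harmless here: $G_E$ is the full preimage of $G_E(N)$, so the $\ell$-adic component at such a prime $\ell$ can be adjusted freely. We may therefore modify $\tilde A$ at each such $\ell$ by multiplying by a matrix in the kernel of reduction mod $N$. This lets us choose the mod $\ell$ component to be $M_1=\begin{psmallmatrix}0&-1\\1&1\end{psmallmatrix}$ or a similar matrix that avoids the eigenvalues $\pm1$; for $\ell=2$ the two conditions coincide, and we use an element of order $3$. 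The result is an element $B\in G_E(M)$ with both $\det(I-B)$ and $\det(I+B)$ units in $\Z/M\Z$. I expect this CRT lifting to be the only delicate step, namely checking carefully that the kernel of $G_E(M)\to G_E(N)$ is the full congruence kernel, so that the components at primes not dividing $N$ are independent.

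The second step uses Chebotarev for the compositum $\Q(E[M])\Q(\sqrt d)$. By Chebotarev we choose $\sigma$ with $\rho_{E,M}(\sigma)=B$. Then $\rho_{E^{(d)},M}(\sigma)=\pm B$, with the sign equal to $\chi_d(\sigma)$. In either case, $\det(I-\rho_{E^{(d)},M}(\sigma))$ is a unit. Reducing modulo $m$ gives an element of $G_{E^{(d)}}(m)$ outside $\mathcal I_1(m)$. Hence $E^{(d)}$ has no congruence obstruction modulo $m$ in the sense of Definition~\ref{Def:GpObstruction}, and since $m$ and $d$ were arbitrary, no quadratic twist of $E$ has a congruence obstruction modulo any $m\geq 2$.
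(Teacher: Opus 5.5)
Your proposal is correct and follows the paper's proof essentially step for step. The shared steps are: a CRT lift $B\equiv A\pmod N$ with a fixed auxiliary matrix at the primes of $m$ not dividing $N$; membership of $B$ in $G_E(\lcm(N,m))$ via the full-preimage property; and the sign change $\rho_{E^\chi}=\chi\rho_E$, which the hypothesis on $\det(I+A)$ absorbs. Working with non-squarefree $m$ directly is fine, since being a unit modulo $\ell^k$ is detected modulo $\ell$.

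One small fix is needed. Your $M_1=\begin{psmallmatrix}0&-1\\1&1\end{psmallmatrix}$ has $\det(I+M_1)=3$, so it has eigenvalue $-1$ modulo $3$ and fails when $3\mid m$ but $3\nmid N$. Use the paper's choice $\begin{psmallmatrix}0&1\\1&1\end{psmallmatrix}$ instead: it has $\det(I-M)=-1$ and $\det(I+M)=1$, so it works uniformly at every prime. Also, no Chebotarev argument is needed, because $B\in G_E(\lcm(N,m))$ already equals $\rho_{E,\lcm(N,m)}(\sigma)$ for some $\sigma$.
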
 
\begin{proof} Let  \(\chi\colon\Gal(\overline{\Q}/\Q)\to\{\pm1\}\) be a possibly trivial quadratic character, and let \(E^\chi\) be the corresponding quadratic twist. Since the existence of a congruence obstruction modulo an integer is equivalent to the existence of one modulo its radical, it suffices to consider squarefree moduli. Fix a squarefree integer $m \geq 2$. We will show that $E^{\chi}$ has no congruence obstruction modulo $m$. For the matrix
\(
        M \coloneqq
        \begin{psmallmatrix}
        0 & 1\\
        1 & 1
        \end{psmallmatrix},
\)
we have that \(\det(M)=-1\), \(\det(I-M)=-1\), and \(\det(I+M)=1\). Thus the reduction of \(M\) modulo \(p\) belongs to \(\GL_2(\F_p)\), and neither the reduction of \(M\) nor that of 
\(-M\) belongs to \(\mathcal{I}_1(p)\), for every prime \(p\).

Set \(n\coloneqq\lcm(N,m)\). By the Chinese remainder theorem, there is a matrix \(B\in\GL_2(\Z/n\Z)\) satisfying
\[
\begin{cases}
        B \equiv A\pmod N, \\
        B \equiv M\pmod p
        \qquad
        \text{for every prime \(p\mid m\) such that \(p\nmid N\)},
\end{cases}
\]
where the second family of congruences is understood to be empty when every prime divisor of \(m\) divides \(N\).

Because \(N\) is a multiple of the adelic level of \(E\), the group \(G_E(n)\) is the full inverse image of \(G_E(N)\) under reduction modulo \(N\). Hence \(B\in G_E(n)\), so there exists \(\sigma\in\Gal(\overline{\Q}/\Q)\) such that \(\rho_{E,n}(\sigma)=B\). After making compatible choices of bases for the $n$-torsion subgroups of \(E\) and \(E^\chi\), we have
\[
        \rho_{E^\chi,n}(\sigma)=\chi(\sigma)B.
\]
Let \(\overline B\) denote the reduction of \(B\) modulo \(m\). If \(p\mid m\) and \(p\mid N\), then \eqref{E:lvlN} gives that
\[
        \det(I-\overline B)\not\equiv 0\pmod p
        \quad \text{and} \quad
        \det(I+\overline B)\not\equiv 0\pmod p.
\]
If \(p\mid m\) and \(p\nmid N\), the same conclusion follows from the choice \(B\equiv M\pmod p\). Since \(m\) is squarefree, it follows that
\[
        \det(I-\overline B),\,
        \det(I+\overline B)
        \in(\Z/m\Z)^\times.
\]
The matrix \(\rho_{E^\chi,m}(\sigma)\) is either \(\overline B\) or \(-\overline B\), and in either case
\[
        \det\bigl(I-\rho_{E^\chi,m}(\sigma)\bigr)
        \in(\Z/m\Z)^\times.
\]
Thus \(G_{E^\chi}(m)\) contains an element outside \(\mathcal I_1(m)\), so \(E^\chi\) does not have a congruence obstruction modulo \(m\).
\end{proof}

\begin{lemma}\label{L:finite-j-exclusion}
Let \(j\in\Q\) be a non-CM \(j\)-invariant arising from a rational point on
one of the modular curves \(X_0(11)\), \(X_{S_4}(13)\), \(X_0(17)\), or \(X_0(37)\). Then no elliptic curve over \(\Q\) with \(j\)-invariant \(j\) has a congruence obstruction modulo any integer \(m \geq 2\).
\end{lemma}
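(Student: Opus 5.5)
The plan is to reduce everything to finitely many explicit $j$-invariants and then apply Proposition~\ref{P:Twist} once for each. The curves \(X_0(11)\), \(X_0(17)\), and \(X_0(37)\) have finitely many rational points, and the non-cuspidal non-CM ones are known. For \(X_0(11)\), these are \(j=-11^2\), \(-11\cdot 131^3\), and \(-2^{15}\) (the last is CM, so excluded). For \(X_0(17)\) there are two non-CM \(j\)-invariants, \(-17^2\cdot101^3/2\) and \(-17\cdot373^3/2^{17}\). For \(X_0(37)\) there are two, \(-7\cdot 11^3\) and \(-7\cdot137^3\cdot2083^3\). The curve \(X_{S_4}(13)\) has exactly three non-CM, non-cuspidal rational points, determined in \cite{MR3961086}. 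So there is a finite list of non-CM \(j\)-invariants \(j_1,\dots,j_r\).

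Since none of these is \(0\) or \(1728\), every elliptic curve over \(\Q\) with \(j\)-invariant \(j_i\) is a quadratic twist of a single fixed curve \(E_i/\Q\). It therefore suffices, by Proposition~\ref{P:Twist}, to exhibit for each \(i\) a positive multiple \(N_i\) of the adelic level of \(E_i\) and a matrix \(A\in G_{E_i}(N_i)\) such that \(\det(I-A)\) and \(\det(I+A)\) are both units modulo \(N_i\). This handles all twists simultaneously, including \(E_i\) itself.

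To find \(A\), I will compute the adelic image \(G_{E_i}\) with Zywina's algorithm \cite{ZywinaOpenImage}. This gives its level \(N_i\) and the finite group \(G_{E_i}(N_i)\), in which I search for an element satisfying \eqref{E:lvlN}. Since \(\det(I-A)\det(I+A)=\det(I-A^2)\), the condition is that \(A^2\) avoids \(\mathcal I_1(p)\) for each prime \(p\mid N_i\).

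A cheaper alternative, which avoids writing down all of \(G_{E_i}(N_i)\), is to use Frobenius elements. Choose a prime \(q\nmid N_iN_{E_i}\) and take \(A=\rho_{E_i,N_i}(\Frob_q)\). Then
\[
\det(I-A)\equiv q+1-a_q, \qquad \det(I+A)\equiv q+1+a_q \pmod{N_i},
\]
so it suffices to find a good prime \(q\) such that \(q+1\pm a_q(E_i)\) are both coprime to \(N_i\). A short search over small \(q\) should succeed for each curve.

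The main obstacle is that the adelic level \(N_i\) has to be certified: the argument uses the full inverse-image property, not just the mod \(N_i\) image. The Frobenius computation itself is mechanical. I would rely on Zywina's provable algorithm to determine \(N_i\) (its prime divisors lie among \(2,3,5,7,11,13,17,37\) and the primes of bad reduction). Then I would record, for each \(j_i\), the pair \((N_i,q)\) in the accompanying code.

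If certifying the level proves awkward, I would instead take \(N_i\) to be a product of powers of all primes where the image could be non-maximal or entangled. Any multiple of the true level is allowed, and the Frobenius criterion still applies to it.
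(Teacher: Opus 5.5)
Your proposal is correct and is essentially the paper's proof. Like the paper, you enumerate the finitely many non-CM $j$-invariants, fix one curve $E_j$ for each, and apply Proposition~\ref{P:Twist} with a certified multiple $N_j$ of the adelic level and a suitable $A_j\in G_{E_j}(N_j)$; your Frobenius search is an equivalent way to find $A_j$, since by Chebotarev every element of $G_{E_j}(N_j)$ has the form $\rho_{E_j,N_j}(\Frob_q)$. One small correction: the rational points on $X_{S_4}(13)$ are determined in \cite[Section~5.1]{MR4589060}, whereas \cite{MR3961086} treats the split Cartan curve of level $13$.
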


\begin{proof}
The rational points on \(X_0(11)\), \(X_0(17)\), and \(X_0(37)\) are determined in \cite[Theorem~7.1]{MR482230}, and those on \(X_{S_4}(13)\) are determined in \cite[Section~5.1]{MR4589060}. For each resulting non-CM \(j\)-invariant, the computation in \texttt{RulingOutj.m} produces an elliptic curve \(E_j/\Q\) with $j(E_j) = j$, an integer \(N_j\) divisible by its adelic level, and \(A_j\in G_{E_j}(N_j)\) satisfying
\[
\det(I-A_j),\,\det(I+A_j)\in(\Z/N_j\Z)^\times.
\]
The result follows from Proposition~\ref{P:Twist}, since every elliptic curve over \(\Q\) with non-CM \(j\)-invariant \(j\) is a quadratic twist of \(E_j\).
\end{proof}

We now improve the bound obtained in Proposition~\ref{P:sqfree}.

\begin{proposition} \label{P:sqfree2}
Let \(E/\Q\) be a non-CM elliptic curve, and let \(m\geq 2\) be squarefree. If \(E\) has a primitive congruence obstruction modulo \(m\), then the largest prime divisor of $m$ is at most $13$ and is not equal to $11$.
\end{proposition}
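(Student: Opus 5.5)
Let $\ell$ be the largest prime divisor of $m$. If $m$ is prime, Proposition~\ref{P:prime-level} gives $\ell\le 7$, so we may assume $m$ is composite. By Proposition~\ref{P:sqfree}, $\ell\le 37$. It therefore suffices to rule out
\[
\ell\in\{11,17,19,23,29,31,37\}.
\]
For each such $\ell$, Corollary~\ref{C:full-prime-support} gives $G_E(\ell)\neq\GL_2(\F_\ell)$, and Proposition~\ref{P:exclude-nonsplit-prime} shows that $G_E(\ell)$ is not conjugate to $C_{\rm ns}^+(\ell)$. By Theorem~\ref{T:SOTA}, $G_E(\ell)$ must then be conjugate to one of the groups in \cite[Table~3]{MR3482279} at level $\ell$, and among these we discard any contained in $C_{\rm ns}^+(\ell)$.

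Next we go through that table for $\ell\in\{11,17,19,23,29,31,37\}$. For $\ell=19,23,29,31$ the table has no entries, since the only nonsurjective images there are $C_{\rm ns}^+$ or Borel-type images that do not exist. For $\ell=17$ and $\ell=37$, the remaining entries are Borel subgroups, so $j(E)$ comes from $X_0(17)(\Q)$ or $X_0(37)(\Q)$. For $\ell=11$, the remaining entries are Borel subgroups, giving points on $X_0(11)$; the other entries are contained in $C_{\rm ns}^+(11)$ and are excluded above. Here we should check that the $C_{\rm ns}^+(11)$ image is indeed covered by Proposition~\ref{P:exclude-nonsplit-prime}, which requires $\ell\ge 11$. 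In all surviving cases, $j(E)$ is a rational non-CM $j$-invariant coming from $X_0(11)$, $X_0(17)$, or $X_0(37)$. Lemma~\ref{L:finite-j-exclusion} then shows that $E$ has no congruence obstruction at all, contradicting the hypothesis. CM $j$-invariants do not arise because $E$ is non-CM.

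It remains to justify $\ell\le 13$ beyond these cases. Any $\ell>13$ with $\ell\le 37$ and $\ell\neq 17,37$ has no admissible image, so this is already covered. For $\ell=13$ nothing needs to be proved. The main obstacle is the bookkeeping for $\ell=11$ and $\ell=17,37$. There we must confirm that every proper image in \cite[Table~3]{MR3482279} at these primes either lies in a normalizer of a nonsplit Cartan subgroup, already excluded, or forces $j(E)\in j(X_0(\ell)(\Q))$. We must also make sure the proof of Proposition~\ref{P:exclude-nonsplit-prime} covers proper subgroups of $C_{\rm ns}^+(11)$, if any occur, or else treat them via the associated $j$-invariants. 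The $X_{S_4}(13)$ part of Lemma~\ref{L:finite-j-exclusion} is not needed here, presumably because it is reserved for restricting the images at $13$ later.
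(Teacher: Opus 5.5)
Your proposal is correct and follows the paper's proof essentially step for step: you reduce to composite \(m\) with \(\ell\le 37\), exclude the full image by Corollary~\ref{C:full-prime-support} and \(C_{\rm ns}^+(\ell)\) by Proposition~\ref{P:exclude-nonsplit-prime}, observe that \(\ell\in\{19,23,29,31\}\) leaves no other possibility, and dispose of the Borel cases at \(11\), \(17\), \(37\) via Lemma~\ref{L:finite-j-exclusion}. Your worry about proper subgroups of \(C_{\rm ns}^+(11)\) is moot, since the only nonsplit-Cartan-type entry at \(\ell=11\) in \cite[Table~3]{MR3482279} is \(\mathtt{11Nn}\) itself, so the phrase ``discard any contained in \(C_{\rm ns}^+(\ell)\)'' only ever removes the full normalizer.
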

\begin{proof}
Let \(\ell\) be the largest prime divisor of \(m\). If \(m\) is prime, then \(m\leq 7\), by Proposition~\ref{P:prime-level}. We may therefore assume that \(m\) is composite. By Proposition~\ref{P:sqfree}, we have
\(\ell\leq 37\).

Suppose first that \(\ell=37\). By Theorem~\ref{T:SOTA} and \cite[Table 3]{MR3482279}, the group \(G_E(37)\) is either \(\GL_2(\F_{37})\), conjugate to a subgroup of \(B_0(37)\), or conjugate to \(C_{\rm ns}^+(37)\). The first possibility is ruled out by Corollary~\ref{C:full-prime-support}, since \(37\) is the largest prime divisor of \(m\), and the third is ruled out by Proposition~\ref{P:exclude-nonsplit-prime}. Hence \(G_E(37)\) is conjugate to a subgroup of \(B_0(37)\). Thus \(j(E)\) arises from a non-CM rational point on \(X_0(37)\), contradicting Lemma~\ref{L:finite-j-exclusion}. Therefore \(37\) cannot be the largest prime divisor of \(m\).

Now suppose that \(\ell\in\{19,23,29,31\}.\) Theorem~\ref{T:SOTA} and \cite[Table 3]{MR3482279} show that \(G_E(\ell)\) is either \(\GL_2(\F_\ell)\) or conjugate to \(C_{\rm ns}^+(\ell)\). These possibilities are ruled out by Corollary~\ref{C:full-prime-support} and Proposition~\ref{P:exclude-nonsplit-prime}, respectively. Hence none of \(19,23,29,31\) can be the largest prime divisor of \(m\).

Next suppose that \(\ell=17\). Theorem~\ref{T:SOTA} and \cite[Table 3]{MR3482279} show that \(G_E(17)\) is either \(\GL_2(\F_{17})\) or conjugate to a subgroup of \(B_0(17)\). The former is excluded by Corollary~\ref{C:full-prime-support}, so \(j(E)\) must arise from a non-CM rational point on \(X_0(17)\), but this contradicts Lemma~\ref{L:finite-j-exclusion}.

Finally, suppose that \(\ell=11\). By Theorem~\ref{T:SOTA} and  \cite[Table 3]{MR3482279}, the only possibilities are the full image, a Borel image, and the normalizer of a nonsplit Cartan. The full image and normalizer of a nonsplit Cartan subgroup are excluded by Corollary~\ref{C:full-prime-support} and Proposition~\ref{P:exclude-nonsplit-prime}, while the Borel case is excluded by Lemma~\ref{L:finite-j-exclusion} applied to \(X_0(11)\).
\end{proof}

\section{Further restrictions on primitive obstruction levels and Galois images} \label{S:prime-restrictions}

We now prove the additional restrictions needed to make the finite group-theoretic search feasible. We first treat the cases in which the largest prime divisor is \(13\) or \(7\), then deduce a finite list of candidate composite levels, and finally obtain additional restrictions at level \(210\).

\begin{proposition} \label{P:13-large}
Let \(E/\Q\) be a non-CM elliptic curve with a primitive congruence obstruction modulo a squarefree integer \(m\). Suppose that \(13\) is the largest prime divisor of \(m\). Then \(m\) divides \(78\), \(G_E(2)=\GL_2(\F_2)\), \(G_E(3)=\GL_2(\F_3)\), and \(G_E(13)\) is conjugate to a subgroup of \(B_0(13)\).
\end{proposition}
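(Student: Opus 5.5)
The plan is to first pin down $G_E(13)$, and then show that any non-surjective image at another prime dividing $6\cdot 5\cdot 7\cdot 11$ forces $j(E)$ into a finite, explicitly known set, which Proposition~\ref{P:Twist} then eliminates. Throughout, $m$ is composite, since a prime level is at most $7$ by Proposition~\ref{P:prime-level}. Write $m=13n$ with $n>1$ squarefree and all prime divisors of $n$ less than $13$. Note that at level $13$ we use only the classification of $G_E(13)$; no rational-point input is needed there, because $X_0(13)$ has genus $0$.

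\textbf{Step 1: the image at $13$.} By Theorem~\ref{T:SOTA} and \cite[Table~3]{MR3482279}, $G_E(13)$ is one of the following:
\begin{itemize}
\item $\GL_2(\F_{13})$, which Corollary~\ref{C:full-prime-support} excludes because $13$ is the largest prime divisor;
\item a group with projective image $S_4$, which Lemma~\ref{L:finite-j-exclusion} excludes via $X_{S_4}(13)$;
\item a subgroup of a Borel, which is the desired conclusion.
\end{itemize}
If the table also lists Cartan normalizers at $13$, Proposition~\ref{P:exclude-nonsplit-prime} removes $C_{\rm ns}^+(13)$. Any remaining split Cartan normalizer would be removed by the known determination of rational points on $X_{s}^+(13)$, which are all CM or cuspidal. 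Hence $G_E(13)\subseteq B_0(13)$ up to conjugacy, and so
\[
|G_E(13)| \text{ divides } 13\cdot 12^2 = 2^4\cdot 3^2\cdot 13 .
\]

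\textbf{Step 2: the primes $5$, $7$, $11$.} Let $\ell\in\{5,7,11\}$ divide $m$. Suppose $G_E(\ell)=\GL_2(\F_\ell)$. Corollary~\ref{C:full-prime-support} gives a prime $p>\ell$ dividing $m$ with $\ell \mid |G_E(p)|$. For $\ell=11$ the only candidate is $p=13$. For $\ell=5$ or $7$ the candidates are $7$, $11$ or $13$, and we may run the argument with the largest non-surjective one. We expect the divisibility to fail in each case, since $\ell\nmid 2^4\cdot 3^2\cdot 13$ and the only other possibilities are non-surjective images at $7$ or $11$, which are treated next. So $G_E(\ell)$ is proper.
\begin{itemize}
\item For $\ell=11$, Proposition~\ref{P:exclude-nonsplit-prime} removes $C_{\rm ns}^+(11)$ and Lemma~\ref{L:finite-j-exclusion} removes the Borel case.
\item For $\ell\in\{5,7\}$, the pair ($13$-Borel, proper mod $\ell$ image) gives a rational point on a fiber product of $X_0(13)$ with a maximal proper level-$\ell$ curve, for instance $X_0(65)$, $X_0(91)$, or the Cartan and exceptional variants.
\end{itemize}
I would cite the known determination of rational points on these composite-level curves (Daniels--Gonz\'alez-Jim\'enez, Rouse--Sutherland--Zureick-Brown, Zywina). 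This yields finitely many non-CM $j$-invariants, and each is ruled out by Proposition~\ref{P:Twist} exactly as in Lemma~\ref{L:finite-j-exclusion}. Therefore $m\mid 78$.

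\textbf{Step 3: full images at $2$ and $3$.} This time we do not assume $2$ or $3$ divides $m$; we argue for $E$ itself. If $G_E(2)\ne\GL_2(\F_2)$, then $G_E(2)$ lies in a Borel subgroup (a rational $2$-isogeny) or in the index-$2$ subgroup $2\mathrm{Cn}$. Combined with the $13$-Borel structure, $j(E)$ is then a rational point on
\begin{itemize}
\item $X_0(26)$, which has genus $2$ and finitely many points, or
\item the genus-$>1$ fiber product of $X_0(13)$ with the level-$2$ curve for $2\mathrm{Cn}$ (a double cover of $X_0(13)$).
\end{itemize}
Likewise, if $G_E(3)\ne\GL_2(\F_3)$, then $G_E(3)$ lies in $B_0(3)$, $C_{\rm s}^+(3)$ or $C_{\rm ns}^+(3)$, giving points on $X_0(39)$ or its Cartan analogues. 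In each case we quote the finiteness and explicit enumeration of rational points from the literature, or compute it, and again kill every resulting $j$ by Proposition~\ref{P:Twist}.

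\textbf{Main obstacle.} I expect the main difficulty to be Step 3, and likewise Step 2 for $5$ and $7$. Some of these fiber products have genus $\geq 2$ with positive-rank Jacobians, for example $X_0(13)\times_{X(1)}X_{\rm ns}^+(3)$, or the $2\mathrm{Cn}$ cover. Provably determining their rational points needs either a published result or a Chabauty computation. My first attempt would be to locate each curve in the LMFDB or in the Rouse--Sutherland--Zureick-Brown database and cite its provably computed rational points. Failing that, I would exhibit a rank-$0$ quotient or an elliptic-curve quotient of rank $0$ through which to map.
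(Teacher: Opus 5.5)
\textbf{Verdict: genuine gap.} Your outline has the same architecture as the paper's proof: force $G_E(13)\subseteq B_0(13)$, show the mod $2$ and mod $3$ images are full, and exclude $5,7,11$ using Corollary~\ref{C:full-prime-support} together with $|B_0(13)|=2^4\cdot 3^2\cdot 13$. Step~1 and your treatment of $\ell=11$ are complete as written, and the latter does not even need the $13$-isogeny. But the load-bearing claim is never established: a non-CM $E/\Q$ with a rational $13$-isogeny cannot have a proper mod $\ell$ image for $\ell\in\{2,3,5,7\}$.

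In its place you propose determining the rational points on roughly a dozen fiber products $X_0(13)\times_{X(1)}X_H(\ell)$. You neither carry this out nor pin down where it is done, and you yourself flag it as the main obstacle, with possible genus $\geq 2$, positive-rank cases needing Chabauty. Only the Borel pieces are immediate from the Mazur--Kenku classification: $X_0(26)$, $X_0(39)$, $X_0(65)$, $X_0(91)$ have no non-cuspidal rational points. The Cartan and exceptional pieces at $3$, $5$, $7$ are exactly what remains open. So, as it stands, your proof of $G_E(3)=\GL_2(\F_3)$ and of $5,7\nmid m$ is incomplete.

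\textbf{How the paper closes it.} It uses a single input, \cite[Proposition~22]{2512.00652}, which says the following about a rational $13$-isogeny:
\begin{itemize}
\item the $p$-adic representation is surjective for every $p\notin\{2,13\}$;
\item the $2$-adic image of a twist containing $-I$ is $\GL_2(\Z_2)$ or \mc{8.2.0.a.1}, both of which reduce to $\GL_2(\F_2)$, and twisting does not change the mod $2$ image.
\end{itemize}
With this, your Step~2 collapses to two lines. Take the largest $q\in\{5,7,11\}$ dividing $m$. Then $G_E(q)$ is full, Corollary~\ref{C:full-prime-support} forces $p=13$, and $q\nmid |B_0(13)|$.

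\textbf{Two smaller corrections.}
\begin{itemize}
\item The $2\mathrm{Cn}$ cover does not have genus $>1$. On $X_0(13)$ one has $j-1728=(t^2+6t+13)(\cdots)^2/t$, so the cover is the elliptic curve $y^2=t(t^2+6t+13)$. This curve has rank $0$ and only the two rational points over the cusps $t=0,\infty$. The mod $2$ part of your plan is therefore elementary.
\item In Step~2, replace ``run the argument with the largest non-surjective one'' by the direct observation that $11\nmid m$, $5\nmid|\GL_2(\F_7)|$, and $5,7\nmid|B_0(13)|$.
\end{itemize}
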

\begin{proof}
Since a primitive obstruction at prime level has level at most \(7\) by Proposition~\ref{P:prime-level}, the integer \(m\) is composite. Theorem~\ref{T:SOTA} and \cite[Table~3]{MR3482279}, together with Corollary~\ref{C:full-prime-support}, leave two possibilities: \(G_E(13)\) is conjugate to a subgroup of \(B_0(13)\), or \(j(E)\) arises from a non-CM rational point on \(X_{S_4}(13)\). The second possibility is ruled out by Lemma~\ref{L:finite-j-exclusion}. After conjugating, we may therefore assume that
\(
G_E(13)\subseteq B_0(13).
\)
 Equivalently, \(E\) admits a rational \(13\)-isogeny. Thus, by \cite[Proposition~22(3)]{2512.00652}, the \(p\)-adic Galois representation of \(E\) is surjective for every prime \(p\notin\{2,13\}\). In particular,
\(
    G_E(3)=\GL_2(\F_3).
\)

To determine the mod \(2\) image, we first choose a single quadratic twist \(E'/\Q\) of \(E\) whose \(2\)-adic image contains \(-I\); see \cite[Corollary~5.25]{MR3482279}. The curve \(E'\) still admits a rational \(13\)-isogeny. Hence \cite[Proposition~22(1)]{2512.00652} implies that its \(2\)-adic image is conjugate either to \(\GL_2(\Z_2)\) or to the group with LMFDB label \mc{8.2.0.a.1}. Both of these groups reduce modulo \(2\) to \(\GL_2(\F_2)\). Quadratic twisting does not change the mod \(2\) image, since \(-I\equiv I\pmod 2\), and therefore
\(
    G_E(2)=\GL_2(\F_2).
\)

It remains to restrict the prime divisors of \(m\). By Proposition~\ref{P:sqfree2}, every prime divisor of \(m\) belongs to the set \(\{2,3,5,7,11,13\}\). Suppose that some \(q\in\{5,7,11\}\) divides \(m\), and choose the largest such \(q\). By \cite[Proposition~22(3)]{2512.00652},
\(
    G_E(q)=\GL_2(\F_q).
\)
Corollary~\ref{C:full-prime-support} therefore gives a prime \(p>q\) dividing \(m\) such that \(q\) divides the order of \(G_E(p)\). By the choice of \(q\), the only possible value of \(p\) is \(13\). On the other
hand, \(|G_E(13)|\) divides 
\[ |B_0(13)| =2^4\cdot 3^2\cdot 13, \]
which is divisible by none of \(5,7,11\), a contradiction. Hence \(m\) has no prime divisors other than \(2,3,13\), and therefore \(m\) divides \(2\cdot 3\cdot 13=78\).
\end{proof}

\begin{proposition} \label{P:7-large}
Let \(E/\Q\) be a non-CM elliptic curve with a primitive congruence obstruction modulo a squarefree integer \(m\). 
Suppose that \(7\) is the largest prime divisor of \(m\). Then \(m\) divides \(210\) and \(G_E(7)\neq\GL_2(\F_7)\). Further, if \(5\) divides \(m\), then the following hold:
\begin{itemize}
    \item\(G_E(2)=\GL_2(\F_2)\),
    \item \(G_E(3)\) is not conjugate to a subgroup of \(B_0(3)\),
    \item \(G_E(5)\) is not equal to \(\GL_2(\F_5)\) and is not conjugate to a subgroup of $B_0(5)$, and
    \item \(G_E(7)\) is not conjugate to a subgroup of $B_0(7).$
\end{itemize}
\end{proposition}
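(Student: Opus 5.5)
Since $7$ is the largest prime divisor of $m$ and $m$ is squarefree, $m$ divides $2\cdot3\cdot5\cdot7=210$ at once. Moreover $m$ is composite, because a primitive obstruction of prime level $7$ is impossible: it would require $G_E(7)\subseteq\mathcal I_1(7)$, which is not the case for $G_E(7)=\GL_2(\F_7)$. Corollary~\ref{C:full-prime-support} applied with $\ell=7$ (the largest prime divisor) then gives $G_E(7)\neq\GL_2(\F_7)$. So $G_E(7)$ is one of the proper mod $7$ images in \cite[Table~3]{MR3482279}, and the remaining work concerns the case $5\mid m$.

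Assume $5\mid m$. The restrictions will come from the fact that a non-CM curve over $\Q$ cannot have several simultaneously small mod $\ell$ images, combined with Corollary~\ref{C:full-prime-support} at $\ell=5$. First, if $G_E(5)=\GL_2(\F_5)$, Corollary~\ref{C:full-prime-support} yields a prime $p>5$ dividing $m$ with $5\mid|G_E(p)|$. Hence $p=7$ and $5$ divides $|G_E(7)|$, which divides $|\GL_2(\F_7)|=2^5\cdot3^2\cdot7$. This is a contradiction, so $G_E(5)\neq\GL_2(\F_5)$. Next, $G_E(7)$ is a proper image, so it lies in a Borel subgroup or a Cartan normalizer, or it is exceptional. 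Similarly $G_E(5)$ is proper. I would now use the known classification of composite-level isogenies and images, namely Kenku's theorem on cyclic isogenies (no rational $35$-, $15\cdot$-type combinations beyond the known ones) and the Daniels--González-Jiménez / Rouse--Sutherland--Zureick-Brown type results on products. From this, if $G_E(5)\subseteq B_0(5)$ and $G_E(7)$ is proper, then $E$ corresponds to a rational point on a fiber product such as $X_0(35)$, $X_0(5)\times_{X(1)}X_{\rm ns}^+(7)$, or $X_0(5)\times X_{S_4}(7)$. These curves have no non-CM non-cuspidal rational points, either by Kenku for $X_0(35)$ or by the known determinations of level-$35$ rational points. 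The same argument handles $G_E(7)\subseteq B_0(7)$ when $G_E(5)$ is proper: the relevant curves $X_0(7)\times X_{S_4}(5)$, $X_0(7)\times X_{\rm ns}^+(5)$, and so on have only finitely many rational points. For those, one can check directly, via Lemma~\ref{L:finite-j-exclusion}-style computations with Proposition~\ref{P:Twist}, that no twist has an obstruction.

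For the conditions at $2$ and $3$: if $G_E(2)\neq\GL_2(\F_2)$ then $E$ has a rational $2$-isogeny or a cyclic cubic $2$-division field, and together with the proper mod $5$ and mod $7$ images this gives a rational point on a modular curve of level $70$ of large genus. The same holds if $G_E(3)\subseteq B_0(3)$, giving a rational $3$-isogeny and level $105$. I would invoke the known classifications (e.g.\ Rouse--Sutherland--Zureick-Brown and Zywina's open image computations) to show that such curves have no non-CM rational points, or only finitely many $j$-invariants. Those finitely many $j$-invariants are then excluded by Proposition~\ref{P:Twist}.

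The main obstacle is this last step, and more generally the case analysis of pairs of proper images at $5$ and $7$. It requires pinning down rational points on several fiber-product modular curves of levels $35$, $70$ and $105$, possibly of positive genus and rank. I would first reduce everything to $X_0(N)$-type or known curves, since then Kenku's results apply directly. Only for the nonsplit or exceptional cases would I fall back on explicit Magma computations of models and Chabauty, or on existing database results.
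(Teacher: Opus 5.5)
Your first steps match the paper: $m\mid 210$, $G_E(7)\neq\GL_2(\F_7)$, and $G_E(5)\neq\GL_2(\F_5)$ via Corollary~\ref{C:full-prime-support} together with $5\nmid|\GL_2(\F_7)|$. One correction: $m$ need not be composite. Primitive obstructions of level exactly $7$ do occur (e.g.\ \ec{26.b1}, Table~\ref{tab:prime-groups}), so your sentence is only valid inside an argument by contradiction that assumes $G_E(7)=\GL_2(\F_7)$. Split into cases instead. If $m=7$, then $G_E(7)\subseteq\mathcal I_1(7)$ is visibly proper. If $m$ is composite, the corollary applies.

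The genuine gap is everything after that: the Borel exclusions at $5$ and $7$ and the conditions at $2$ and $3$ are never proved. You reduce them to determining rational points on fiber-product curves of levels $35$, $70$, $105$, but this reduction has three problems.
\begin{enumerate}
\item The case list is incomplete. For instance, $X_0(5)\times_{X(1)}X_{\rm s}^+(7)$, which covers the split Cartan images at $7$, is missing. Conversely, an $S_4$ image at $7$ cannot occur over $\Q$ at all, since its determinant lies in the squares.
\item The claims that these curves have no, or finitely many, non-CM points are unsupported. Kenku's theorem covers only $X_0(35)$, and Rouse--Sutherland--Zureick-Brown concerns single-prime $\ell$-adic images.
\item The fallback through Proposition~\ref{P:Twist} presupposes that those points are already known. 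You concede yourself that this is the main obstacle.
\end{enumerate}

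The missing idea is that no fiber-product analysis is needed. A rational $5$-isogeny forces the $7$-adic representation to be surjective, and a rational $7$-isogeny forces the $5$-adic one to be surjective \cite[Propositions~20(4) and~21(4)]{2512.00652}. This immediately contradicts the properness at $7$ (resp.\ $5$) already established.

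One then bootstraps. With $5$- and $7$-isogenies excluded, Kenku's classification \cite{MR675184} rules out cyclic $3q$- and $2q$-isogenies for primes $q>7$. Hence a curve with a rational $3$-isogeny has no rational isogeny of prime degree exceeding $3$. Once the Borel case at $3$ is also excluded, a curve with a rational $2$-isogeny has none exceeding $2$. In each case \cite[Propositions~19(4) and~18(3)]{2512.00652} make the $7$-adic representation surjective, a contradiction.

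Finally, the only proper non-Borel mod $2$ image, the subgroup of order $3$, is handled at level $14$ rather than $70$. By \cite[Theorem~C(2)]{MR3957898}, this mod $2$ image together with a non-surjective mod $7$ image forces $G_E(7)$ into a Borel subgroup, which was already excluded. Note that the order of these deductions is essential: each step uses the exclusions established before it.
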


\begin{proof}
Since \(m\) is squarefree and \(7\) is its largest prime divisor, \(m\) divides \(2\cdot3\cdot5\cdot7=210\). We first note that the mod \(7\) image is not full. If \(m=7\), then \(G_E(7)\subseteq\mathcal I_1(7)\). If \(m\) is composite, then the properness of $G_E(7)$ follows immediately from Corollary~\ref{C:full-prime-support}. Thus in either case,
\(
    G_E(7)\neq\GL_2(\F_7).
\)

For the remainder of the proof, suppose that \(5\) divides \(m\). If \(G_E(5)=\GL_2(\F_5)\), then Corollary~\ref{C:full-prime-support} gives a prime \(p>5\) dividing \(m\) such that \(5\) divides the order of \(G_E(p)\). The only possible value of \(p\) is \(7\). On the other hand, \(|G_E(7)|\) divides \(|\GL_2(\F_7)|\), and
\[
    |\GL_2(\F_7)|=(7^2-1)(7^2-7)=2^5\cdot3^2\cdot7
\]
is not divisible by \(5\), a contradiction. Hence
\(
    G_E(5)\neq\GL_2(\F_5).
\)

Suppose next that \(G_E(5)\) is conjugate to a subgroup of \(B_0(5)\). Equivalently, \(E\) admits a rational \(5\)-isogeny. By \cite[Proposition~20(4)]{2512.00652}, the \(7\)-adic representation of \(E\) is then surjective, contradicting the properness of \(G_E(7)\). Thus \(G_E(5)\) is not conjugate to a subgroup of \(B_0(5)\). Similarly, if \(G_E(7)\) were conjugate to a subgroup of \(B_0(7)\), then \(E\) would admit a rational \(7\)-isogeny, and \cite[Proposition~21(4)]{2512.00652} would imply that the \(5\)-adic representation of \(E\) is surjective. This contradicts the properness of \(G_E(5)\). Hence \(G_E(7)\) is not conjugate to a subgroup of \(B_0(7)\).

We next consider the mod \(3\) image. Suppose that \(G_E(3)\) is conjugate to a subgroup of \(B_0(3)\), so that \(E\) admits a rational \(3\)-isogeny. We have already shown that \(E\) admits neither a rational \(5\)-isogeny nor a rational \(7\)-isogeny. If $E$ also admitted a rational $q$-isogeny for a prime $q>7$, then $E$ would in addition admit a rational cyclic $3q$-isogeny, contrary to Kenku's classification of rational cyclic isogenies \cite[Theorem~1]{MR675184}. Thus \(E\) admits no rational isogeny of prime degree greater than \(3\). By \cite[Proposition~19(4)]{2512.00652}, its \(7\)-adic representation is surjective, again contradicting the properness of \(G_E(7)\). Therefore \(G_E(3)\) is not conjugate to a subgroup of \(B_0(3)\).

Finally, we determine the mod \(2\) image. Suppose that \(G_E(2)\neq\GL_2(\F_2)\). If $G_E(2)$ is conjugate to a subgroup of $B_0(2)$, then $E$ admits a rational $2$-isogeny. We have already ruled out rational isogenies of degrees $3$, $5$, and $7$. If $E$ also admitted a rational $q$-isogeny for a prime $q>7$, then it would admit a rational cyclic $2q$-isogeny, again contradicting \cite[Theorem~1]{MR675184}. Hence $E$ would admit no rational isogeny of prime degree greater than $2$, and \cite[Proposition~18(3)]{2512.00652} would imply that its $7$-adic representation is surjective. This is impossible. Since \(\GL_2(\F_2)\cong S_3\), every proper subgroup of
\(\GL_2(\F_2)\) has order \(1\), \(2\), or \(3\), and every
subgroup of order \(1\) or \(2\) is contained in a Borel subgroup.
It follows that \(G_E(2)\) is the unique subgroup of order \(3\),
denoted \(G_{3,2}\) in \cite{MR3957898}.  Since \(G_E(7)\neq\GL_2(\F_7)\), \cite[Theorem~C(2)]{MR3957898} then implies that \(G_E(14)\) is conjugate to a subgroup of \(G_{3,2}\times G_{7,7}\), where \(G_{7,7}=B_0(7)\) in the notation of \cite{MR3957898}. But then \(G_E(7)\) is conjugate to a subgroup of \(B_0(7)\), contradicting what we proved above. Thus
\(
    G_E(2)=\GL_2(\F_2).
\)
\end{proof}

\begin{corollary}\label{C:candidate-levels}
Let \(E/\Q\) be a non-CM elliptic curve with a primitive congruence obstruction modulo a composite integer \(m\geq2\). Then
\begin{equation}\label{E:candidate-levels}
m\in\{6,10,14,15,21,26,30,35,39,42,70,78,105,210\}.
\end{equation}
\end{corollary}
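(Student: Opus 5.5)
Since a congruence obstruction modulo \(m\) is equivalent to one modulo \(\operatorname{rad}(m)\), a primitive obstruction level is squarefree; otherwise \(\operatorname{rad}(m)\) would be a proper divisor of \(m\) that is at least \(2\) and still carries an obstruction. So \(m\) is a squarefree composite integer. By Proposition~\ref{P:sqfree2}, its largest prime divisor \(\ell\) lies in \(\{2,3,5,7,13\}\). I will split into cases according to \(\ell\) and list the squarefree composite integers allowed in each case.

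If \(\ell=13\), Proposition~\ref{P:13-large} gives \(m\mid 78\). The squarefree composite divisors of \(78\) having \(13\) as a prime factor are \(26\), \(39\) and \(78\). If \(\ell=7\), Proposition~\ref{P:7-large} gives \(m\mid 210\) with \(7\mid m\). The composite options are \(14,21,35,42,70,105,210\). If \(\ell=5\), then \(m\mid 30\) with \(5\mid m\), giving \(10,15,30\). If \(\ell=3\), the only squarefree composite level is \(6\). The case \(\ell=2\) gives no composite level.

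Taking the union gives exactly the set in \eqref{E:candidate-levels}. The argument is pure bookkeeping on top of the previous propositions. The one point needing care is the initial reduction to squarefree \(m\), which uses the primitivity hypothesis together with the radical observation from the introduction.
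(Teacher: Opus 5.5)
Your proof is correct and is essentially the paper's argument: squarefreeness from primitivity and the radical observation, then a case split on the largest prime divisor using Propositions~\ref{P:sqfree2}, \ref{P:13-large} and \ref{P:7-large}, with the divisor lists read off directly. Your separate mention of $\ell=2$, which gives no composite level, is a harmless extra step; the paper starts its case split at $\{3,5,7,13\}$.
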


\begin{proof}
The integer \(m\) is squarefree. By Proposition~\ref{P:sqfree2}, its largest prime divisor belongs to \(\{3,5,7,13\}\). If the largest prime divisor is \(13\), then Proposition~\ref{P:13-large} gives \(m\mid78\), and hence \(m\in\{26,39,78\}\). If the largest prime divisor is \(7\), then Proposition~\ref{P:7-large} gives \(m\mid210\), and hence
\[
m\in\{14,21,35,42,70,105,210\}.
\]
If the largest prime divisor is \(5\), then squarefreeness gives \(m\in\{10,15,30\}\), while if it is \(3\), then \(m=6\). Combining these possibilities gives the stated list.
\end{proof}

We conclude the section by obtaining additional restrictions at level \(m = 210\). These substantially reduce the number of prime level image tuples that must be considered for this level in the group-theoretic search.

\begin{proposition}\label{P:level-210}
Let \(E/\Q\) be a non-CM elliptic curve with a primitive congruence obstruction modulo \(210\). Then its tuple of mod \(2,3,5,\) and \(7\) images is one of the following:
\[
(\mathtt{2GL},\mathtt{3GL},\mathtt{5Nn},\mathtt{7Nn}), \;
(\mathtt{2GL},\mathtt{3GL},\mathtt{5S4},\mathtt{7Nn}), \;
(\mathtt{2GL},\mathtt{3GL},\mathtt{5S4},\mathtt{7Ns}), \;
(\mathtt{2GL},\mathtt{3Nn},\mathtt{5Nn},\mathtt{7Nn}),
\]
where we use the notation of \cite[Section 6.4]{MR3482279} and write $\mathtt{\ell} \mathtt{GL}$ for $\GL_2(\F_\ell)$.
\end{proposition}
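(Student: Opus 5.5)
We work with the four prime level images of $E$. Primitivity at $210$ means that $G_E(105)$, $G_E(70)$, $G_E(42)$ and $G_E(30)$ each contain an element outside the corresponding $\mathcal I_1$. Since $m=210$ is divisible by $5$ and $7$ is its largest prime divisor, Proposition~\ref{P:7-large} applies and already gives most of the constraints:
\[
G_E(2)=\GL_2(\F_2),\qquad G_E(5)\notin\{\GL_2(\F_5)\}\cup\{\text{subgroups of }B_0(5)\},
\]
\[
G_E(3)\not\subseteq B_0(3),\qquad G_E(7)\neq\GL_2(\F_7),\qquad G_E(7)\not\subseteq B_0(7)
\]
(all up to conjugacy). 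The plan is to combine these with Theorem~\ref{T:SOTA}, namely the list of possible proper images in \cite[Table~3]{MR3482279}, and with known rational point results. This should cut the possible tuples down to the four displayed.

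The first step is to enumerate the remaining possibilities prime by prime using Sutherland's table. At $3$, the non-Borel options are $\mathtt{3GL}$, $\mathtt{3Nn}$, and possibly $\mathtt{3Ns}$; the subgroups of $\mathtt{3Ns}$, such as $\mathtt{3Cs}$, lie in a Borel and are therefore already excluded. At $5$, the non-Borel proper images are $\mathtt{5Ns}$, $\mathtt{5Nn}$, $\mathtt{5S4}$, and subgroups of $\mathtt{5Ns}$ such as $\mathtt{5Ns.2.1}$. At $7$, the options are $\mathtt{7Ns}$ and its subgroups, $\mathtt{7Nn}$, and the exceptional images of $X_{S_4}(7)$ type. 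The remaining step is to discard the combinations not in the list. The tools for this are the results of \cite{2512.00652}, which say that a given proper image at one prime forces surjectivity of the $\ell$-adic representations at other primes, together with Kenku-type incompatibilities and Lemma~\ref{L:finite-j-exclusion}-style exclusions.

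Concretely, I would handle the following cases.
\begin{enumerate}
\item $G_E(5)\subseteq C_{\rm s}^+(5)$: I would use that $\mathtt{5Ns}$ implies surjectivity at $7$ (or a finite $j$-list), contradicting properness of $G_E(7)$.
\item $G_E(7)$ of type $\mathtt{7Ns}$: this should be compatible only with $\mathtt{5S4}$ together with full images at $2$ and $3$.
\item $\mathtt{3Nn}$: this should pair only with $\mathtt{5Nn}$ and $\mathtt{7Nn}$.
\end{enumerate}
Any combination whose simultaneous existence forces a rational point on a fiber-product modular curve such as $X_{\mathtt{5S4}}\times_{X(1)} X_{\mathtt{7Ns}}\times_{X(1)} X_{\mathtt{3Nn}}$ would be ruled out by citing known rational point determinations for the relevant genus $\ge 2$ curves. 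Failing that, I would apply Proposition~\ref{P:Twist} to the finitely many $j$-invariants involved.

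For any tuple that survives these arithmetic restrictions, I would do a group-theoretic check. Since $G_E(210)\subseteq \mathcal I_1(210)$ with full determinant and with the prescribed projections, I would enumerate subdirect products of the four prime level groups, equivalently iterated Goursat fiber products, and test whether any of them is contained in $\mathcal I_1(210)$ while no projection to a proper divisor is obstructed. A tuple for which no such group exists is eliminated. For example, tuples with $\mathtt{3GL}$ and $\mathtt{5Ns}$ might fail here purely by a counting argument on $\mathcal F_1$.

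I expect the main obstacle to be the middle step: finding, for each of the many excluded tuples, the right citation or computation. The results in \cite{2512.00652} cover isogeny (Borel) situations cleanly, but for Cartan normalizer and exceptional images one needs either a surjectivity statement at another prime or the rational points on a product modular curve. My first approach would be the finite group search to kill most tuples cheaply, reserving the arithmetic arguments for the few tuples that admit group-theoretic obstructions but do not appear in the statement.
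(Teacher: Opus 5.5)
Your first step matches the paper's. Proposition~\ref{P:7-large} and Theorem~\ref{T:SOTA} leave $H_2=\mathtt{2GL}$, $H_3\in\{\mathtt{3GL},\mathtt{3Nn},\mathtt{3Ns}\}$, $H_5\in\{\mathtt{5Nn},\mathtt{5S4},\mathtt{5Ns},\mathtt{5Ns.2.1}\}$ and $H_7\in\{\mathtt{7Nn},\mathtt{7Ns},\mathtt{7Ns.2.1},\mathtt{7Ns.3.1}\}$. There is no $S_4$-type image at $7$: since $7\equiv -1 \pmod 8$, its projective image would lie in $\PSL_2(\F_7)$, so all its determinants would be squares.

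The gap is everything after this, which is the real content of the proposition. Your cases (2) and (3) only restate the desired conclusion, and case (1) rests on an unsourced claim that $\mathtt{5Ns}$ forces surjectivity at $7$. The tools you name cannot reach these cases: \cite{2512.00652} and Kenku's theorem concern rational isogenies, and all Borel images are already excluded. What is missing is arithmetic input on simultaneous Cartan-type images, which the paper takes from the literature:
\begin{enumerate}
\item $\mathtt{7Ns.2.1}$ and $\mathtt{7Ns.3.1}$ occur only for $j=2268945/128$, and every curve with this $j$ has surjective mod $5$ image, contradicting Proposition~\ref{P:7-large}.
\item The product curves for $\mathtt{3Nn}\times\mathtt{7Ns}$, $\mathtt{3Ns}\times\mathtt{7Ns}$ and $\mathtt{5Ns}\times\mathtt{7Ns}$ have no non-CM rational points \cite{MayleRouse}.
\item \cite[Theorem~6.2]{MR5049284} rules out $\mathtt{5Nn}$ together with $\mathtt{7Ns}$, and forces $H_3\in\{\mathtt{3GL},\mathtt{3Nn}\}$ and $H_5\in\{\mathtt{5Nn},\mathtt{5S4}\}$ when $H_7=\mathtt{7Nn}$.
\item The pair $(\mathtt{3Nn},\mathtt{5S4})$ does not occur \cite[Section~5]{MR4458130}.
\end{enumerate}
Without these, or your own determination of rational points on the same fiber products, nothing in your plan even addresses a tuple such as $(\mathtt{2GL},\mathtt{3Ns},\mathtt{5Nn},\mathtt{7Nn})$.

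Your group-theoretic fallback does not close the gap. The cheap filter of Lemma~\ref{L:Sum} is vacuous at level $210$. We have $\mathcal F_1(\mathtt{2GL})=2/3$, and $\mathcal F_1$ of $\mathtt{3GL}$, $\mathtt{3Ns}$, $\mathtt{3Nn}$ is $7/16$, $5/8$, $5/16$ respectively. Every mod $5$ and mod $7$ candidate contains $I$ and has order at most $96$, so the sum is always at least $1$. In particular, tuples with $\mathtt{3GL}$ and $\mathtt{5Ns}$ are not removed by counting.

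A full subgroup descent over all $48$ tuples is only a hope. Wherever it produces obstruction groups whose coarse curves have infinitely many or undetermined rational points, you need exactly the fiber-product information above. Proposition~\ref{P:Twist} applies only once a finite $j$-list is known, so it can handle the $\mathtt{7Ns.2.1}$/$\mathtt{7Ns.3.1}$ case but not the others. Finally, the paper proves this proposition precisely to make the level $210$ search tractable. Reversing that order is an unverified computational claim, not an argument.
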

\begin{proof}
For each $p$, write \(H_p\) for the label of the mod \(p\) image of $E$. Inspection of the groups appearing in Theorem~\ref{T:SOTA}, together with Proposition~\ref{P:7-large}, reveals that 
\[
H_2=\mathtt{2GL},\qquad
H_3\in\{\mathtt{3GL},\mathtt{3Nn},\mathtt{3Ns}\},
\]
\[
H_5\in\{\mathtt{5Nn},\mathtt{5S4},\mathtt{5Ns},\mathtt{5Ns.2.1}\},
\qquad
H_7\in\{\mathtt{7Nn},\mathtt{7Ns},\mathtt{7Ns.2.1},\mathtt{7Ns.3.1}\}.
\]
This is checked in the file \texttt{Level210.m}, along with the rest of the computations of the proof.

The mod \(7\) images \(\mathtt{7Ns.2.1}\) and \(\mathtt{7Ns.3.1}\) cannot occur under the present hypotheses. Indeed, the only non-CM \(j\)-invariant arising in these cases is
\(
j=\frac{2268945}{128}
\)
by \cite[Theorem~2]{MR2950703} and \cite[Remark~6.5]{MR3482279}. For a representative elliptic curve with this \(j\)-invariant, the accompanying \texttt{Magma} computation verifies that the mod \(5\) image is \(\GL_2(\F_5)\). Surjectivity of the mod \(5\) representation is preserved under quadratic twisting by \cite[Proposition~21]{MR4458130}, so every elliptic curve over \(\Q\) with this \(j\)-invariant has surjective mod \(5\) image. This contradicts Proposition~\ref{P:7-large}, which shows that \(G_E(5)\) must be proper.

Suppose now that \(H_7=\mathtt{7Ns}\). The modular curves associated with
\[
\mathtt{3Nn}\times\mathtt{7Ns}
\qquad\text{and}\qquad
\mathtt{3Ns}\times\mathtt{7Ns}
\]
have no non-CM rational points by \cite{MayleRouse,MayleRousecode}, so \(H_3=\mathtt{3GL}\).  By \cite[Theorem~6.2]{MR5049284}, the case \(H_5=\mathtt{5Nn}\) is impossible here, and the modular curve associated with
\(
\mathtt{5Ns}\times\mathtt{7Ns}
\)
has no non-CM rational points again by \cite{MayleRouse,MayleRousecode}. Since \(\mathtt{5Ns.2.1}\) is a subgroup of \(\mathtt{5Ns}\), this also excludes \(H_5=\mathtt{5Ns.2.1}\). From these considerations, it follows that
\[
(H_3,H_5,H_7)=(\mathtt{3GL},\mathtt{5S4},\mathtt{7Ns}).
\]

It remains to consider \(H_7=\mathtt{7Nn}\). By \cite[Theorem~6.2]{MR5049284}, we have that
\[
H_3\in\{\mathtt{3GL},\mathtt{3Nn}\},
\qquad
H_5\in\{\mathtt{5Nn},\mathtt{5S4}\}.
\]
The pair 
\(
(H_3, H_5) = (\mathtt{3Nn}, \mathtt{5S4})
\)
cannot occur as a pair of mod $3$ and mod $5$ Galois images for an elliptic curve over $\Q$ by \cite[Section~5]{MR4458130}. The remaining three possibilities are therefore
\[
(\mathtt{3GL},\mathtt{5Nn},\mathtt{7Nn}),\qquad
(\mathtt{3GL},\mathtt{5S4},\mathtt{7Nn}),\qquad
(\mathtt{3Nn},\mathtt{5Nn},\mathtt{7Nn}),
\]
which, together with the case \(H_7=\mathtt{7Ns}\), give the four tuples in the statement.
\end{proof}

\section{The finite group-theoretic search}\label{S:Search}

In this section, we describe the finite group-theoretic computation. By Corollary~\ref{C:candidate-levels}, only the fourteen levels in \eqref{E:candidate-levels} need to be considered. We begin with a simple, computationally inexpensive necessary condition on the prime level images.

\begin{lemma} \label{L:Sum}
    Let $m = p_1 \cdots p_r$ be a squarefree composite integer, where the $p_i$ are distinct primes. If $G \subseteq \GL_2(\Z/m\Z)$ has a congruence obstruction modulo $m$, then
\[ \sum_{i=1}^r \mathcal{F}_1(G(p_i)) \geq 1,\]
where $\mathcal{F}_1$ was defined in \eqref{eq:F1}.
\end{lemma}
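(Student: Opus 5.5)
This is a union bound over the prime factors of $m$, combined with the fact that the projection of $G$ onto each prime component is uniform. By the Chinese remainder theorem, $\Z/m\Z \isom \prod_i \F_{p_i}$. Hence $\det(I-M)$ is a unit modulo $m$ exactly when it is a unit modulo every $p_i$, which gives
\[
\mathcal I_1(m)=\bigcup_{i=1}^r \pi_i^{-1}\bigl(\mathcal I_1(p_i)\bigr),
\]
where $\pi_i\colon \GL_2(\Z/m\Z)\to\GL_2(\F_{p_i})$ denotes reduction modulo $p_i$. If $G\subseteq\mathcal I_1(m)$, then
\[
G=\bigcup_{i=1}^r \bigl(G\cap\pi_i^{-1}(\mathcal I_1(p_i))\bigr),
\]
so
\[
|G|\le\sum_{i=1}^r\bigl|G\cap\pi_i^{-1}(\mathcal I_1(p_i))\bigr|.
\]

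Next I compute each term. The restriction $\pi_i|_G\colon G\to G(p_i)$ is a surjective homomorphism, so every fiber is a coset of its kernel and has size $|G|/|G(p_i)|$. The set $G\cap\pi_i^{-1}(\mathcal I_1(p_i))$ is the union of the fibers over the elements of $G(p_i)\cap\mathcal I_1(p_i)$. Its cardinality is therefore
\[
|G(p_i)\cap\mathcal I_1(p_i)|\cdot\frac{|G|}{|G(p_i)|}=\mathcal F_1(G(p_i))\,|G|.
\]
Substituting this into the inequality above and dividing by $|G|$ gives $\sum_i\mathcal F_1(G(p_i))\ge1$.

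No step is a real obstacle. The only point needing care is the CRT decomposition of $\mathcal I_1(m)$. A non-unit in $\Z/m\Z$ with $m$ squarefree is exactly an element that vanishes modulo some $p_i$. Also, the reduction of $\det(I-M)$ modulo $p_i$ equals $\det(I-\pi_i(M))$. Both facts are immediate, so the argument is short.
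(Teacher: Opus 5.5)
Your proof is correct and follows the paper's argument essentially verbatim. The paper likewise sets $S_i=\{g\in G:\pi_i(g)\in\mathcal I_1(p_i)\}$, uses the equal fiber sizes of the surjection $G\to G(p_i)$ to get $|S_i|/|G|=\mathcal F_1(G(p_i))$, and applies the Chinese remainder theorem to write $G=\bigcup_i S_i$ before taking cardinalities.
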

\begin{proof}
For each \(1\leq i\leq r\), write \(G_i\coloneqq G(p_i)\), let
\(
\pi_i\colon G\to G_i
\)
be the reduction map, and set
\[
S_i\coloneqq\{g\in G:\pi_i(g)\in\mathcal I_1(p_i)\}.
\]
Since \(\pi_i\) is surjective, all of its fibers have the same cardinality, so
\(
|S_i|/|G| = \mathcal F_1(G_i).
\)
By the Chinese remainder theorem,
\[
g\in\mathcal I_1(m)
\quad\Longleftrightarrow\quad
\pi_i(g)\in\mathcal I_1(p_i)
\quad\text{for some }i.
\]
The obstruction condition therefore implies that $G = \bigcup_{i=1}^r S_i$. Taking cardinalities and dividing through by $|G|$ proves the result.
\end{proof}

Fix a level \(m=p_1\cdots p_r\) from \eqref{E:candidate-levels}. We consider  tuples $(G_1, \ldots, G_r)$ of representatives, up to conjugacy, of the possible mod $p_i$ images in Theorem~\ref{T:SOTA}, including $\GL_2(\F_{p_i})$. We narrow down to only those which satisfy 
\[
\sum_{i=1}^r \mathcal F_1(G_i)\geq 1
\quad \text{and} \quad
\mathcal F_1(G_i)<1 \; \text{for each $1\leq i \leq r$} \]
and which are not ruled out by the restrictions of Section~\ref{S:prime-restrictions}. For each such tuple \((G_1,\ldots,G_r)\), we set
\(
H_0\coloneqq G_1\times\cdots\times G_r,
\)
which we view as a subgroup of \(\GL_2(\mathbb Z/m\mathbb Z)\) via the isomorphism of the Chinese remainder theorem. Starting at $H_0$,  we descend through maximal subgroups (again, up to conjugacy). In this process, a group $H$ is retained for further consideration only if it satisfies all four of the following conditions:
\begin{enumerate}
    \item $H(p_i) = G_i$ for each $1 \leq i \leq r$,
    \item  $\det H = (\Z/m\Z)^\times$,
    \item $H$ contains a matrix $c$ with $c^2 = I$, $\tr c = 0$, and $\det c = -1$, and
    \item $H$ has no congruence obstruction modulo any proper divisor $n \geq 2$ of $m$.
\end{enumerate}
Notice that for a non-CM elliptic curve over $\Q$ with a primitive obstruction modulo $m$, the mod $m$ image satisfies all of these conditions, as does every supergroup of the mod $m$ image contained in $H_0$ since a failure of any of the four conditions persists under passage to any subgroup.

When $H\subseteq\mathcal I_1(m)$, the fourth condition makes $H$ a primitive obstruction group. We record the pair $(m,H)$ and consider the coarse modular curve $X_{H^\pm}$, where $H^\pm=\langle H,-I\rangle$. If $X_{H^\pm}(\Q)$ is known to be finite, descent below $H$ can stop: every elliptic curve over $\Q$ whose mod $m$ image is contained in $H$, up to conjugacy, has its $j$-invariant among the rational $j$-invariants of this curve. On the other hand, if $X_{H^{\pm}}(\Q)$ is infinite or we are unable to determine whether it is infinite (in principle, this can occur for a genus 1 curve), the search continues through the subgroups of $H$. 

After completing the search at a fixed level \(m\), we keep only those recorded groups \(H\) with \(X_{H^\pm}(\Q)\) known to be finite that are maximal under containment up to conjugacy in \(\GL_2(\Z/m\Z)\) among such recorded groups. Indeed, if \(K\) is contained in a conjugate of a strictly larger such group \(H\), then \(X_{K^\pm}\) maps to \(X_{H^\pm}\) over \(\Q\), compatibly with the \(j\)-maps. Every possible mod \(m\) image contained in \(K\), up to conjugacy, is therefore already covered by \(H\). This reduction is applied across all local tuples at level \(m\). Groups for which the coarse curve has infinitely many rational points or its finiteness is unresolved are retained.

This procedure is exhaustive for images of non-CM elliptic curves over $\Q$ with a primitive obstruction: up to conjugacy, every such image lies in a chain of maximal subgroups below one of the $H_0$ in which every group on that chain passes the four necessary conditions. Thus either the image itself is reached or the corresponding $j$-invariant arises from a rational point on a coarse modular curve with finitely many rational points. For many of the groups $H_0$ we encounter, we find no subgroup that satisfies the four necessary conditions above and has a congruence obstruction modulo $m$.

We summarize the results of this group-theoretic search in the following proposition.

\begin{proposition}\label{P:search-output}
Let \(E/\Q\) be a non-CM elliptic curve with a primitive congruence obstruction modulo a composite integer \(m\). Then either \(G_E(m)\) is conjugate to one of the groups listed in Table~\ref{tab:infpoints}, or $j(E)$ arises from a rational point on one of the coarse curves listed with obstruction modulus $m$ in Table~\ref{tab:finitepoints}.
\end{proposition}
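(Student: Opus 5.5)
The plan is to show that the search described above is exhaustive and then read off its output. Let $E/\Q$ be non-CM with a primitive congruence obstruction modulo a composite $m$. By Corollary~\ref{C:candidate-levels}, $m$ is one of the fourteen levels in \eqref{E:candidate-levels}. Write $m=p_1\cdots p_r$ and $G\coloneqq G_E(m)$, viewed inside $\prod_i\GL_2(\F_{p_i})$ via the Chinese remainder theorem.

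First I will check that the tuple $(G(p_1),\dots,G(p_r))$ is conjugate to one of the tuples $(G_1,\dots,G_r)$ that the search starts from. Each $G(p_i)$ is either $\GL_2(\F_{p_i})$ or one of the groups of Theorem~\ref{T:SOTA}. Primitivity gives $\mathcal F_1(G(p_i))<1$ for each $i$. Lemma~\ref{L:Sum} gives $\sum_i\mathcal F_1(G(p_i))\ge1$. The restrictions of Propositions~\ref{P:13-large}, \ref{P:7-large}, and \ref{P:level-210} also hold. After conjugating each factor, we may assume $G\subseteq H_0=G_1\times\cdots\times G_r$.

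Next I will verify that $G$ satisfies the four retention conditions. Condition (1) holds by construction. Condition (2) follows from the Weil pairing. For condition (3), take $c$ to be the image of complex conjugation: it is an involution with determinant $-1$, and for $m$ divisible by an odd prime this forces trace $0$ (modulo $2$ the condition is just the existence of an order-dividing-$2$ element of trace $0$, which holds for any element of $\GL_2(\F_2)$ of order $\le 2$). Condition (4) is primitivity. Each condition is inherited by every intermediate group $G\subseteq K\subseteq H_0$. For (1), (2) and (3) this holds because the relevant images or elements only get larger. For (4) it holds because $K(n)\supseteq G(n)\not\subseteq\mathcal I_1(n)$.

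Since $G$ is obtained from $H_0$ by a finite chain of maximal subgroups, and every group on that chain is retained, the descent either reaches $G$ itself or stops earlier. It can only stop earlier at some recorded group $H\supseteq G$ with $H\subseteq\mathcal I_1(m)$ and $X_{H^\pm}(\Q)$ finite. In the latter case $G_E(m)\subseteq H\subseteq H^\pm$, up to conjugacy. If $j(E)\notin\{0,1728\}$, then $j(E)\in j(X_{H^\pm}(\Q))$; the values $0$ and $1728$ are CM and so excluded. The final maximality pruning keeps some maximal recorded $H'\supseteq H$ up to conjugacy, and $X_{H^\pm}\to X_{H'^\pm}$ shows that $j(E)$ lies on a curve in Table~\ref{tab:finitepoints}. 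In the former case the descent reaches $G$ with $X_{G^\pm}(\Q)$ infinite or unresolved, and such retained groups form Table~\ref{tab:infpoints}.

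The main obstacle is not the logic but the actual computation. It requires enumerating maximal subgroups up to conjugacy in the large groups at levels $105$ and $210$, and certifying finiteness of $X_{H^\pm}(\Q)$, for example by a genus at least $2$ bound or rank $0$ of the Jacobian, or by a map to a known curve. I would rely on the Magma code and on Zywina's model computations, with the transpose convention, to carry this out.
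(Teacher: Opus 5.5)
Your proposal is correct and follows the paper's route. The shared steps are: restricting the starting tuples via Lemma~\ref{L:Sum} and Section~\ref{S:prime-restrictions}; noting that the four retention conditions pass to every group between $G_E(m)$ and $H_0$; interpreting $X_{H^\pm}$ through the $j$-map when $-I\notin H$; and the final maximality pruning. One bookkeeping slip: if the descent reaches $G_E(m)$ itself and $X_{G_E(m)^\pm}(\Q)$ is finite, that case belongs to the second alternative (take $H=G_E(m)$), not to Table~\ref{tab:infpoints}. Your justification of condition (3) via complex conjugation, including the mod $2$ check, supplies a detail the paper leaves implicit.
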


\section{Rational points on the remaining modular curves} \label{S:RatPtWork}

The search described in Section \ref{S:Search} determines eight primitive obstruction groups (up to conjugacy) whose coarse modular curves have infinitely many rational points. Table \ref{tab:infpoints} gives each of these groups (specified by LMFDB label), together with the level of the obstruction $m$, index, and an example of an elliptic curve over $\Q$ (also specified by LMFDB label) whose mod $m$ Galois image is the group.

\begin{table}[H]
\centering
\caption{Primitive obstruction images with infinitely many rational points}
\label{tab:infpoints}
\begin{tabular}{|c|c|l|l|}
\hline
\(m\) & Index & Group & Example \\ \hline

6  & 24   & \mc{6.24.0-3.a.1.1} &  \ec{2116800.cri3} \\

6 & 8    &  \mc{6.8.0-3.a.1.1} & \ec{129600.ip2} \\ 
6  & 8    & \mc{6.8.0-3.a.1.2} &   \ec{254016.ee1} \\  
10 & 120  & \mc{10.120.0-5.a.1.1}& \ec{121.d2}\\ 
10 & 24   & \mc{10.24.0-5.a.2.2} & \ec{1225.i2}\\ 
10 & 24   & \mc{10.24.0-5.a.1.1}& \ec{1600.i4} \\ 
14 & 48   &  \mc{14.48.0-7.a.1.1} &  \ec{10816.c2} \\ 
14 & 48   & \mc{14.48.0-7.a.2.1} &  \ec{28224.co2} \\ \hline

\end{tabular}
\end{table}

For every group $H$ in Table~\ref{tab:infpoints}, the coarse curve $X_{H^\pm}$ has genus zero, and the displayed example corresponds to a rational point on $X_{H^\pm}$. Thus $X_{H^\pm} \cong \mathbb{P}^1$ over $\Q$, and Hilbert's irreducibility gives infinitely many distinct non-CM rational $j$-invariants for which $\langle G_E(m), -I \rangle$ is conjugate to $H^{\pm}$; see \cite[Lemma~3.5]{zywina2015possibleimagesmodell}. Since \(-I\notin H\), after an appropriate quadratic twist the mod \(m\) Galois image is conjugate to \(H\); see \cite[Corollary~5.25]{MR3482279}. This justifies the assertion made following the statement of Theorem~\ref{T:class}.

Next, the search gives $28$ groups (up to conjugacy) whose coarse modular curves have finitely many rational points, and are maximal with this property (as explained in Section~\ref{S:Search}). These groups give 16 distinct coarse curves. In Table~\ref{tab:finitepoints}, we list these coarse curves together with the obstruction modulus $m$, the number of distinct non-CM rational $j$-invariants, and the method used to determine them. The label ``Genus $2$ rank $0$'' means the coarse modular curve has genus $2$ and its Jacobian has rank $0$, and we used the \texttt{Chabauty0} command in Magma to compute its rational points. The label ``Maps to rank $0$ EC'' means the coarse curve admits a nonconstant morphism to a rank $0$ elliptic curve over $\Q$; its rational $j$-invariants were determined in \cite{MayleRouse} and we obtain them from the file \texttt{allpointcounts.txt} from the repository \cite{MayleRousecode}. Finally, ``Rank $0$ EC'' means the coarse curve is a rank $0$ elliptic curve and we directly computed its rational points, which are its torsion points, in \texttt{Magma}. 

\begin{table}[H]
\centering
\caption{Modular curves requiring rational point computations}
\label{tab:finitepoints}
\begin{tabular}{|c|l|c|l|} \hline
$m$  & Coarse curve & \shortstack{No. of non-CM\\$j$-invariants} & Method \\ \hline
10  & \mc{10.24.1.b.1} & 0 & Rank $0$ EC  \\
10  & \mc{10.24.1.b.2} & 0 & Rank $0$ EC  \\
14  & \mc{14.48.2.e.1} & 0 & Genus $2$ rank $0$ \\ 
14  & \mc{14.48.2.e.2} & 0  & Genus $2$ rank $0$ \\
15, 30 & \mc{15.48.1.a.1} &2 & Rank $0$ EC \\ 
15, 30 & \mc{15.48.1.a.2} & 2 & Rank $0$ EC \\ 
15, 30  & \mc{15.72.3.e.1}& 0 & Maps to rank $0$ EC  \\ 
15, 30 & \mc{15.72.3.e.2} & 0 & Maps to rank $0$ EC \\ 
21, 42 &\mc{21.96.3.a.1} & 0 & Maps to rank $0$ EC  \\ 
21, 42 & \mc{21.96.3.a.2}& 0 & Maps to rank $0$ EC  \\ 
21, 42 &\mc{21.144.7.b.1}& 0  &Maps to rank $0$ EC  \\ 
21, 42 & \mc{21.144.7.b.2} & 0 & Maps to rank $0$ EC  \\ 
30 & \mc{30.72.5.bn.1}&0 &Maps to rank $0$ EC  \\ 
30 & \mc{30.72.5.bn.2}& 0 & Maps to rank $0$ EC  \\
42 & \mc{42.144.10.t.1}& 0&Maps to rank $0$ EC  \\ 
42 & \mc{42.144.10.t.2}& 0& Maps to rank $0$ EC\\ 
\hline
\end{tabular}
\end{table}

We observe that the candidate levels \(21\) and \(42\) produce no non-CM elliptic
curves. Of the \(16\) rows in Table~\ref{tab:finitepoints}, \(14\) have
no non-CM rational points on their associated coarse curves. We now treat the two remaining rows, which involve two distinct coarse curves.

\begin{proposition}\label{P:exceptional-twists}
Let $m \in \{15,30\}$. The non-CM elliptic curves $E/\Q$ with a primitive congruence obstruction modulo $m$ whose $j$-invariant arises from a rational point on one of the two remaining coarse curves in Table~\ref{tab:finitepoints} are precisely the curves listed with level $m$ in Table~\ref{tab:exceptional-images}.
\end{proposition}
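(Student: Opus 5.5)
The proof is a finite computation organized as follows. The two remaining coarse curves are $X_{H_1^\pm}=\mc{15.48.1.a.1}$ and $X_{H_2^\pm}=\mc{15.48.1.a.2}$. Each is a rank $0$ elliptic curve over $\Q$, so its rational points are its torsion points. First, I will list these points, discard the cusps and CM points, and compute the images under the $j$-map. By Table~\ref{tab:finitepoints} this gives two non-CM $j$-invariants on each curve, and I expect these to coincide with $j(\ec{450.c1}),\dots,j(\ec{450.c4})$, which form an isogeny class.

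For each such $j$, I fix a model $E_j/\Q$. The case $j\in\{0,1728\}$ does not arise, since these are CM. Every elliptic curve over $\Q$ with this $j$-invariant is then a quadratic twist $E_j^{(d)}$, with $d$ running over squarefree integers. Using Zywina's code \cite{ZywinaOpen}, I will compute the adelic image of $E_j$ and an integer $N_j$ divisible by its adelic level. Because of the prime level structure, $N_j$ should involve only the primes $2,3,5$. The problem then reduces to deciding for which $d$ the group $G_{E_j^{(d)}}(m)$ lies in $\mathcal I_1(m)$ and has no obstruction modulo $3$, $5$, $6$, $10$ (when $m=30$), or $2$.

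The key observation concerns how $d$ affects the mod $m$ image. Twisting by $d$ multiplies $\rho_{E_j}$ by the quadratic character $\chi_d$. Whether a given element of $G_{E_j}(N_j)$ is multiplied by $-1$ depends on the restriction of $\chi_d$ to the relevant Galois group. If $\Q(\sqrt d)\not\subseteq \Q(E_j[N_j])$, then $\chi_d$ is independent of $\rho_{E_j,N_j}$. In that case $G_{E_j^{(d)}}(m)$ contains $\pm G_{E_j}(m)$, and it suffices to exhibit $A\in G_{E_j}(m)$ with $\det(I-A)$ and $\det(I+A)$ both units, in the spirit of Proposition~\ref{P:Twist}; such an $A$ kills the obstruction. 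So only the finitely many $d$ with $\Q(\sqrt d)\subseteq\Q(E_j[N_j])$ can survive. These are the $d$ dividing $2\cdot3\cdot5$ up to sign, namely $d\in\{\pm1,\pm2,\pm3,\pm5,\pm6,\pm10,\pm15,\pm30\}$.

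For each of these finitely many twists, I will compute $G_{E_j^{(d)}}(30)$ explicitly and test the conditions $G(15)\subseteq\mathcal I_1(15)$, $G(30)\subseteq\mathcal I_1(30)$, and the primitivity conditions. The expected outcome is that $d=1$ yields the curves $\ec{450.c}$ with primitive level $15$, and $d=-2$ yields $\ec{14400.ef}$ with primitive level $30$. Here the mod $15$ obstruction should fail and the mod $30$ one hold. All other $d$ should fail. This identifies the curves listed in Table~\ref{tab:exceptional-images}.

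The main obstacle is justifying that the non-surviving twists really fail in the generic case, that is, that the mod $m$ image of a generic twist genuinely contains $-A$ together with $A$. I will handle this cleanly by working at level $\lcm(N_j,8)$, so that all quadratic characters unramified outside $30$ factor through it. There I will enumerate the index-$\le 2$ twisted subgroups $\{\chi(g)g\}$ of $G_{E_j}(N)\times\{\pm1\}$ directly. This makes the finite check exhaustive.
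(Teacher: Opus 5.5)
Your overall plan is the paper's: list the non-CM $j$-invariants on the two rank-$0$ curves, write every curve with that $j$ as a twist of a fixed $E_j$, reduce to finitely many twists, and test those by computation. However, the step that reduces to finitely many twists fails as written.

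You propose to dispose of every $d$ with $\Q(\sqrt d)\not\subseteq\Q(E_j[N_j])$ by exhibiting $A\in G_{E_j}(m)$ with \emph{both} $\det(I-A)$ and $\det(I+A)$ units modulo $m$. Such an $A$ cannot exist here. Suppose it did. For any quadratic character $\chi$, pick $\sigma$ with $\rho_{E_j,m}(\sigma)=A$. The twisted mod $m$ image then contains $\chi(\sigma)A=\pm A$, which lies outside $\mathcal I_1(m)$. So \emph{no} twist of $E_j$ would have an obstruction modulo $m$; this is exactly the mechanism of Proposition~\ref{P:Twist}. That contradicts the fact that the curves in \ec{450.c} and \ec{14400.ef} are twists of $E_j$ with obstructions modulo $15$ and $30$. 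With your own choice $E_j\in$ \ec{450.c}, the failure is immediate: $G_{E_j}(15)\subseteq\mathcal I_1(15)$, so $\det(I-A)$ is never a unit modulo $15$ or $30$. As written, the search for $A$ comes up empty, and the reduction to finitely many $d$ is not established.

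The repair is short, because much less is needed. If $\Q(\sqrt d)\not\subseteq\Q(E_j[m])$, linear disjointness makes the twisted mod $m$ image exactly $G_{E_j}(m)\cup -G_{E_j}(m)$. So it suffices to find one $A\in G_{E_j}(m)$ with $\det(I-A)$ \emph{or} $\det(I+A)$ a unit. For $m=15$, take $A=I$: $\det(2I)=4$ is a unit modulo $15$. For $m=30$, this is a single finite check, e.g.\ $\mathcal F_1(\pm G_{E_j}(30))<1$.

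With that change your argument matches the paper's, which phrases the reduction via $-I$. Since $-I\notin G_{E_j}(m)$, a twist whose mod $m$ image omits $-I$ must come from a character trivial on $\ker\rho_{E_j,m}$, hence factoring through $\Q(E_j[m])$. The paper enumerates those characters (seven nontrivial quadratic fields for \ec{50.a2}). Your sixteen values of $d\mid 30$ up to sign are a harmless superset. Passing to $N_j$ or $\lcm(N_j,8)$ is unnecessary, since disjointness from $\Q(E_j[m])$ alone determines the twisted mod $m$ image. So the ``main obstacle'' you flag is not where the difficulty lies; the real issue is the over-strong criterion above.
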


\begin{proof}
Each of the two coarse curves $X_K$ has eight rational points with exactly two non-CM rational \(j\)-invariants. Choose a representative \(E_j/\Q\) for each of these \(j\)-invariants. Every elliptic curve over \(\Q\) with the same non-CM \(j\)-invariant is a quadratic twist of \(E_j\). From LMFDB we know that primes $7$ and $13$ do not divide adelic level of any $E_j$. Therefore, their mod $7$ and mod $13$ Galois images are $\GL_2(\Z/7\Z)$ and $\GL_2(\Z/13\Z)$, respectively. Hence, any of their quadratic twists will also have surjective mod $7$ and mod $13$ Galois images. From Proposition \ref{P:13-large} and Proposition \ref{P:7-large}, we know that any of their quadratic twists will have primitive obstruction mod $15$ or mod $30$.

We now justify that only finitely many twists need to be tested. For any two distinct quadratic twists of a fixed $E_j$, their mod $15$ (or mod $30$) Galois image would produce the same coarse modular curve. Therefore, it suffices to look at twists whose image does not contain $-I.$ From LMFDB we know that that for any $E_j$, their mod $15$ (or mod $30$) Galois images do not contain $-I$. Suppose that $\chi$ is a quadratic character for which
\[
-I \notin \chi\rho_{E_j,m}(\Gal(\overline{\Q}/\Q)).
\]
Since \(-I \notin \rho_{E_j,m}(\Gal(\overline{\Q}/\Q))\), this forces \(\chi(\sigma)=1\). Thus \(\chi\) factors through \(\Gal(\Q(E_j[m])/\Q)\), which proves that the enumeration of quadratic characters of the division field, including the trivial character, is exhaustive.

The computation in \texttt{lemma rational points.m} enumerates these characters, tests the trivial twist and every resulting nontrivial twist, checks primitivity at modulus $m$, and determines the Galois image in each case. It gives precisely the eight curves and images in Table~\ref{tab:exceptional-images}. For example, for \(\ec{50.a2}\), the relevant quadratic fields are 
\[\Q(\sqrt{-2}), \Q(\sqrt{-3}),
\Q(\sqrt{5}), \Q(\sqrt6), \Q(\sqrt{-10}), \Q(\sqrt{-15}), \Q(\sqrt{30}) 
\]
and the eight twists, including the trivial twist, leave only \(\ec{450.c3}\) and \(\ec{14400.ef3}\) for the two primitive obstruction groups. The other three cases are analogous.
\end{proof}

The Galois images of these eight curves are recorded in the following table.

\begin{table}[H]
\centering
\caption{Galois images at levels \(15\) and \(30\)}
\label{tab:exceptional-images}
\begin{tabular}{|c|l|l|}
\hline
\(m\) & \(E\) & LMFDB label of Galois image modulo $m$\\ \hline
15 & \ec{450.c1} & \mc{15.192.1-15.a.1.4} \\
15 & \ec{450.c2} & \mc{15.192.1-15.a.2.2} \\
15 & \ec{450.c3} & \mc{15.192.1-15.a.4.1} \\
15 & \ec{450.c4} & \mc{15.192.1-15.a.3.3} \\
30 & \ec{14400.ef1} & \mc{30.192.1-15.a.1.4} \\
30 & \ec{14400.ef2} & \mc{30.192.1-15.a.2.4} \\
30 & \ec{14400.ef3} & \mc{30.192.1-15.a.4.2} \\
30 & \ec{14400.ef4} & \mc{30.192.1-15.a.3.2} \\ \hline
\end{tabular}
\end{table}

\begin{proof}[Proof of Theorem~\ref{T:class}]
Suppose first that $E$ has a primitive congruence obstruction modulo a composite integer $m \geq 2$. By Proposition~\ref{P:search-output}, the group \(G_E(m)\) is either conjugate to one of the groups in Table~\ref{tab:infpoints}, or $j(E)$ arises from a rational point on one of the coarse curves listed with obstruction modulus $m$ in Table~\ref{tab:finitepoints}. The rational point computations in Table~\ref{tab:finitepoints} eliminate every row except the two rows with coarse curves \mc{15.48.1.a.1} and \mc{15.48.1.a.2}, each occurring at obstruction moduli $15$ and $30$. In particular, this eliminates the finite rational point cases at moduli $10$ and $14$, as well as all cases at moduli $21$ and $42$. Proposition~\ref{P:exceptional-twists} determines the twists arising from these two coarse curves at moduli $15$ and $30$. Thus
\[
m \in \{6,10,14,15,30\}
\]
and \(G_E(m)\) is, up to conjugacy, one of the groups in Tables~\ref{tab:infpoints} and \ref{tab:exceptional-images}.

For each group \(H\) listed with obstruction modulus $m$ in Tables~\ref{tab:infpoints} and \ref{tab:exceptional-images}, we verify in \texttt{Magma} that
\[
\mathcal F_1(H)=1
\qquad\text{and}\qquad
\mathcal F_1(H(n))<1\quad\text{for every proper divisor $n \geq 2$ of $m$}.
\]
Hence the corresponding obstruction is primitive. This proves the converse.
\end{proof}

\section{The primitive obstruction set and universal level} \label{S:set}

We now determine which primitive obstruction levels can occur simultaneously. We first record the invariance of the primitive obstruction set under \(\Q\)-isogeny and determine the possible prime obstruction levels. We then rule out simultaneous composite obstructions and the simultaneous occurrence of a prime obstruction with a primitive composite obstruction.

\begin{lemma}\label{L:isogeny-invariance}
If \(E\) and \(E'\) are \(\Q\)-isogenous elliptic curves, then
\[
\operatorname{PrimObs}(E)=\operatorname{PrimObs}(E').
\]
\end{lemma}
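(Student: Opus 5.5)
The plan is to show that $E$ has a congruence obstruction modulo $m$ if and only if $E'$ does, for every $m\geq 2$. Primitivity is defined purely in terms of which moduli carry obstructions, so the equality of primitive obstruction sets follows immediately from this.

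The key input is that isogenous curves have the same point counts modulo good primes. Let $\phi\colon E\to E'$ be a $\Q$-isogeny. By Tate's theorem, or more elementarily because $\phi$ reduces to an isogeny $E_p\to E'_p$ over $\F_p$ for $p$ of good reduction for both curves, we have
\[
a_p(E)=a_p(E').
\]
The elementary argument goes as follows. The reduced isogeny commutes with the Frobenius $\pi_p$, so it carries $\ker(1-\pi_p)$ on $E_p$ into $\ker(1-\pi_p)$ on $E'_p$. Combining this with the dual isogeny and the equality $\deg(1-\pi_p)=p+1-a_p$ gives equal traces. I would cite the standard fact, for instance from Silverman or Tate, rather than reprove it. Consequently
\[
\#E_p(\F_p)=\#E'_p(\F_p)
\]
for all primes $p\nmid N_EN_{E'}$. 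Note that isogenous curves have the same conductor, so the two sets of bad primes coincide.

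Definition~\ref{D:CongObs} quantifies over all but finitely many primes of good reduction. The equality of point counts therefore shows that $\gcd(\#E_p(\F_p),m)>1$ holds for almost all $p$ exactly when the same holds for $E'$. Hence the sets of moduli carrying obstructions agree, and so do the primitive ones.

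No step is a genuine obstacle. The only point needing care is justifying $a_p(E)=a_p(E')$. Rather than working through reductions, I would phrase it via $\ell$-adic Tate modules: $\phi$ induces a $\Gal(\overline{\Q}/\Q)$-equivariant isomorphism $V_\ell(E)\cong V_\ell(E')$ after tensoring with $\Q_\ell$, so the Frobenius traces agree for all $p\nmid \ell N_E$. Varying $\ell$ covers every good prime.
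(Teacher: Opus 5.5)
Your proposal is correct and is essentially the paper's argument: isogenous curves have the same Frobenius traces, hence the same $\#E_p(\F_p)$ at all but finitely many primes, so obstructions occur at exactly the same moduli and therefore the primitive ones agree. Your Tate-module justification of $a_p(E)=a_p(E')$ is fine. Your ``elementary'' sketch, however, is loose as written: mapping $\ker(1-\pi_p)$ into $\ker(1-\pi_p')$ and back via the dual isogeny does not by itself give equal cardinalities. Taking degrees in $\phi\circ(1-\pi_p)=(1-\pi_p')\circ\phi$ does.
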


\begin{proof}
For all but finitely many primes $p$, the two reductions have the same
Frobenius trace and hence the same number of points, so $E$ has a congruence obstruction at any given level if and only if $E'$ does. Applying this also to the proper divisors of the level proves the result. 
\end{proof}

\begin{proposition}\label{P:prime-set}
As \(E/\Q\) varies over all non-CM elliptic curves, the possible values of
\[
\{ \ell : \ell \text{ is prime and } \ell \in \operatorname{PrimObs}(E) \}
\]
are as follows:
\[
\emptyset,\quad
\{2\},\quad
\{3\},\quad
\{5\},\quad
\{7\},\quad
\{2,3\},\quad
\{2,5\}.
\]
\end{proposition}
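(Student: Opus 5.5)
Every prime \(\ell\) appearing in \(\operatorname{PrimObs}(E)\) is automatically primitive, since a prime has no proper divisor \(n\geq 2\). By Proposition~\ref{P:prime-level} the set in question is therefore a subset of \(\{2,3,5,7\}\). Lemma~\ref{L:isogeny-invariance} says it depends only on the \(\Q\)-isogeny class of \(E\). Katz's theorem \cite[Theorem~2]{MR604840} then gives a concrete description: \(\ell\) lies in the set exactly when some curve \(E_\ell\) in the isogeny class has a rational point of order \(\ell\).

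The plan is to prove that the set is never one of \(\{3,5\}\), \(\{3,7\}\), \(\{5,7\}\), \(\{2,7\}\), nor any larger set, and then to exhibit each of the seven listed sets.

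For the exclusions, suppose \(\ell,\ell'\) are distinct primes in the set. Then the isogeny class contains \(E_\ell\) with a rational \(\ell\)-torsion point and \(E_{\ell'}\) with a rational \(\ell'\)-torsion point. The subgroup generated by a rational \(\ell\)-torsion point is a rational \(\ell\)-isogeny kernel, and rational isogeny degrees are preserved across the class up to the usual composition with dual isogenies. Hence \(E\) admits rational isogenies of degrees \(\ell\) and \(\ell'\), and so a rational cyclic \(\ell\ell'\)-isogeny. By Kenku's classification \cite[Theorem~1]{MR675184}, the non-CM cyclic degrees \(\ell\ell'\) with \(\ell,\ell'\in\{2,3,5,7\}\) are only \(6, 10, 14, 15, 21\). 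This already kills \(\{5,7\}\) and every set containing both \(5\) and \(7\).

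The remaining pairs \(\{2,7\}\), \(\{3,5\}\), \(\{3,7\}\) do correspond to degrees allowed by Kenku, namely \(X_0(14)\), \(X_0(15)\), \(X_0(21)\). These modular curves have finitely many rational points, all listed in \cite{MR482230} (Ligozat/Kenku): two non-cuspidal \(j\)-invariants for \(14\), four for \(15\) (the class \texttt{50.a}/\texttt{450.b} type), and four for \(21\) (the class \texttt{162.b}/\texttt{162.c}). For each such \(j\)-invariant I would argue as follows.
\begin{itemize}
\item Torsion of a rational \(\ell\)-order point is not twist-invariant. So I would use the Galois-image formulation of Proposition~\ref{P:prime-level}: \(G_E(\ell)\subseteq\mathcal I_1(\ell)\) exactly when \(G_E(\ell)\) is one of the groups in Table~\ref{tab:prime-groups}.
\item I would then check, for every quadratic twist, that \(G_E(\ell)\) and \(G_E(\ell')\) cannot simultaneously lie in \(\mathcal I_1\).
\item Concretely, a twist by \(\chi\) has a rational \(\ell\)-torsion point in its class only if \(\chi\) equals the isogeny character minus the cyclotomic adjustment. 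The two requirements at \(\ell\) and \(\ell'\) pin down incompatible characters, since these are explicit characters of conductor dividing the adelic level.
\item Equivalently, and more mechanically, I would run the approach of Proposition~\ref{P:Twist} and Proposition~\ref{P:exceptional-twists}: enumerate the finitely many relevant twists (characters factoring through \(\Q(E_j[\ell\ell'])\)) in Magma and verify that none has both obstructions.
\end{itemize}
These cases finish the exclusions: a set of size \(\geq 3\) would contain one of the excluded pairs.

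For existence I would cite explicit curves. The empty set is realized by any curve with surjective adelic image, e.g.\ \texttt{37.a1}. The set \(\{2,5\}\) is realized by \ec{66.c3}, which has a rational point of order \(10\). The set \(\{2,3\}\) is realized by a curve with a rational \(6\)-torsion point, e.g.\ \ec{14.a1}'s class \texttt{14.a}. For \(\{2\}\), \(\{3\}\), \(\{5\}\), \(\{7\}\), I would take curves with a rational point of that order and no other rational isogenies of prime degree in \(\{2,3,5,7\}\) giving torsion (e.g.\ \ec{15.a2}-type examples checked via LMFDB isogeny data, \ec{38.b2}, \ec{26.b2}).

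The main obstacle is the twist analysis over \(X_0(14)\), \(X_0(15)\), \(X_0(21)\), since containment in Table~\ref{tab:prime-groups} is not determined by \(j\) alone. I would first try the clean character argument. It runs as follows: the obstruction at \(\ell\) forces the semisimplified \(G_E(\ell)\) to have characters \(\{1,\omega_\ell\}\), which fixes the twist class modulo \(\ell\) up to at most two characters. I would then intersect these constraints with the one at \(\ell'\), falling back on the Magma enumeration if this is unclean.
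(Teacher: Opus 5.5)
Your proof is correct, but it takes a different route from the paper. The paper's proof is two lines. Katz's theorem turns the question into which rational torsion orders occur within a $\Q$-isogeny class, and the classification of isogeny-torsion graphs over $\Q$ \cite[Theorem~1.3 and Tables~1--4]{MR4203041} then gives the seven sets directly. You instead argue as follows:
\begin{enumerate}
\item Reducibility mod $\ell$ is an isogeny invariant, so two primes in the set force a rational cyclic $\ell\ell'$-isogeny.
\item Kenku's list then leaves only the pairs $\{2,7\}$, $\{3,5\}$, $\{3,7\}$.
\item These are disposed of by a finite twist analysis over the rational points of $X_0(15)$ and $X_0(21)$. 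For odd $\ell$, the obstruction pins the twisting character to at most two quadratic characters, both factoring through $\Q(E_j[\ell])$, so the enumeration is exhaustive.
\end{enumerate}
In effect you re-derive exactly the slice of the isogeny-torsion classification that is needed. This is more self-contained and explains why only finitely many twists matter. The cost is a small verification, by your character argument or Magma, that the paper outsources to the citation.

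A few points need tidying.
\begin{itemize}
\item The two non-cuspidal rational points on $X_0(14)$ have $j=-3^3\cdot5^3$ and $j=3^3\cdot5^3\cdot17^3$. Both are CM (isogeny class \texttt{49.a}). So for non-CM $E$ the pair $\{2,7\}$ is already excluded by your cyclic-isogeny argument, and your list of non-CM degrees should read $6,10,15,21$. More generally, for $\ell=2$ the obstruction is equivalent to a rational $2$-isogeny and is twist-invariant, so pairs containing $2$ never require twisting.
\item The phrase ``rational isogeny degrees are preserved across the class'' should be justified properly. The clean reason is that the semisimplification of $E[\ell]$ is an isogeny invariant.
\item No elliptic curve over $\Q$ has surjective adelic image. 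Your example \texttt{37.a1} nevertheless realizes $\emptyset$, because it has no rational isogeny of prime degree.
\item You give no example for $\{3\}$. Any curve with a rational point of order $3$ and no rational isogeny of degree $2$, $5$, or $7$ works.
\end{itemize}
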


\begin{proof}
As noted in the proof of Proposition~\ref{P:prime-level}, \(E\) has a congruence obstruction modulo a prime \(\ell\) if and only if \(E\) is isogenous over \(\Q\) to an elliptic curve with a rational point of order \(\ell\). The classification of isogeny-torsion graphs over \(\Q\) in \cite[Theorem~1.3 and Tables~1--4]{MR4203041} then gives the displayed possibilities.
\end{proof}

We next rule out the simultaneous occurrence of two primitive composite obstructions. All computational assertions in the proof are verified in \texttt{multipleobstructions.m}.

\begin{proposition} \label{P:no-two-composite}
Let $E/\Q$ be a non-CM elliptic curve. There is at most one composite integer $m \geq 2$ such that $E$ has a primitive congruence obstruction modulo $m$.
\end{proposition}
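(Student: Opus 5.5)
By Theorem~\ref{T:class}, every primitive composite obstruction level lies in $\{6,10,14,15,30\}$. Suppose, for contradiction, that $E$ has primitive obstructions modulo two distinct levels $m_1\neq m_2$ in this set. Primitivity at $m_i$ says that $G_E(m_i)$ is conjugate to one of the groups in Tables~\ref{tab:infpoints} and \ref{tab:exceptional-images}. So it suffices to show that no pair of such groups at distinct levels can occur together as images of a single non-CM curve.

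A first reduction removes some pairs for free. If $m_1\mid m_2$, then an obstruction at $m_1$ contradicts primitivity at $m_2$. This rules out the pairs $(6,30)$, $(10,30)$ and $(15,30)$. The remaining pairs are
\[
(6,10),\ (6,14),\ (6,15),\ (10,14),\ (10,15),\ (14,15),\ (14,30).
\]
For each remaining pair, we note which prime-level images are forced by the tables.
\begin{itemize}
\item Levels $10$ and $14$ force a specific $5$-adic or $7$-adic image, containing the Borel-type groups \mc{5.24.0-5.a.*} or \mc{7.48.0-7.a.*}.
\item Level $6$ forces the mod $3$ image to be one of the \mc{3.8.0-3.a.*}-type groups.
\item Levels $15$ and $30$ occur only for the eight explicit curves in the isogeny classes \ec{450.c} and \ec{14400.ef}.
\end{itemize}

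Any pair involving $15$ or $30$ is handled by direct inspection. For each of these eight curves, we compute the mod $6$, $10$ and $14$ images, for example via \texttt{Magma}/LMFDB, and check that $\mathcal F_1<1$ at each of those levels.

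For the pairs drawn from $\{6,10,14\}$, we combine the two prime-level constraints. Take $(10,14)$ first. The table groups force rational $5$- and $7$-isogenies, up to twist, and together these would give a rational $35$-isogeny. That is impossible by Kenku's theorem~\cite{MR675184}. Similarly, $(6,10)$ or $(6,14)$ would force a $3$-isogeny-type image together with a $5$- or $7$-isogeny. By \cite[Propositions~19--21]{2512.00652}, or by the $15$/$21$-isogeny classification, only finitely many $j$-invariants are possible.

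For those finitely many $j$-invariants, or whenever the group-theoretic constraints are not obviously incompatible, we proceed concretely. We form the level-$\lcm(m_1,m_2)$ fiber products of the tabulated groups that are compatible with the common mod $2$ image. We then check via rational points on the associated modular curves, as in Section~\ref{S:RatPtWork}, that no non-CM point remains. Proposition~\ref{P:Twist} can then dispose of all quadratic twists of any leftover $j$-invariant simultaneously.

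The step I expect to be the main obstacle is confirming that the mod $3$ images appearing in the level-$6$ groups genuinely force a rational $3$-isogeny, or some comparably restrictive structure. Only then do the isogeny-degree arguments apply. If that fails, I would fall back on enumerating the subgroups of the products $H_1\times H_2$ with the right projections, following Goursat, and computing their modular curves, much like the search in Section~\ref{S:Search}.
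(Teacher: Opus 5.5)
Your outline can be completed, and for the pairs among $6,10,14$ it is more economical than the paper's. However, the tool you name for leftover $j$-invariants cannot work, and the step you single out as the main obstacle is not one.

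\textbf{Comparison with the paper.} For pairs involving $15$ or $30$, you do what the paper does: compute the images of the eight curves in Table~\ref{tab:exceptional-images}. Your divisibility remark removing $(6,30)$, $(10,30)$, $(15,30)$ is a correct shortcut. For levels $6,10,14$, the paper treats all $21$ pairs of image types uniformly. In $17$ cases the fiber-product curve covers a curve with no non-CM points (Table~\ref{tab:multiple-obstructions}). The four pairs $(\texttt{6.8.0-3.a.1.*},\texttt{10.24.0-5.a.*})$ only cover $X_0(15)$, and these are settled by testing the twists of \ec{50.a1}--\ec{50.a4} through their $6$-division fields. Your isogeny-degree argument replaces much of that table. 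Besides $(10,14)$ (degree $35$), it also removes every pair involving \mc{6.24.0-3.a.1.1}: its coarse mod $3$ image is $C_{\rm s}(3)$, giving a cyclic $45$- or $63$-isogeny. It likewise removes every pair involving \mc{10.120.0-5.a.1.1}: its coarse mod $5$ image lies in $C_{\rm s}(5)$, giving a cyclic $75$-isogeny. What remains are the eight pairs lying over $X_0(15)$ and $X_0(21)$. Your worry about level $6$ is unfounded. The coarse mod $3$ images there are $B_0(3)$ (\mc{3.4.0.a.1}) and $C_{\rm s}(3)$ (\mc{3.12.0.a.1}), so a rational $3$-isogeny is automatic.

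\textbf{The gap.} Proposition~\ref{P:Twist} can never be applied at these $j$-invariants. Its conclusion is that no twist has any congruence obstruction. But every non-CM $j$-invariant on $X_0(15)$ is that of a curve in \ec{450.c}, which has a primitive obstruction modulo $15$. The $j$-invariants on $X_0(21)$ are realized by curves in \ec{162.b} and \ec{162.c}, which have an obstruction modulo $3$ (proof of Proposition~\ref{P:no-prime-composite}). So if a coarse fiber-product curve kept a non-CM point, your plan would have no valid way to finish.

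\textbf{How to fix it.} One option is the kernel argument from Proposition~\ref{P:exceptional-twists}. Since \texttt{6.8.0-3.a.1.*} does not contain $-I$, any twist whose image lands in it comes from a quadratic character factoring through $\Q(E[6])$. There are finitely many such twists, and each can be tested.

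Alternatively, show that the coarse check really succeeds for the $X_0(15)$ pairs:
\begin{enumerate}
\item Both \texttt{6.8.0-3.a.1.*} and \texttt{10.24.0-5.a.*} force a ($\pm1$-valued) diagonal character at $3$, respectively at $5$, to equal the sign character of $\Q(E[2])$.
\item Hence a $3$-isogeny character must equal a quadratic $5$-isogeny character, and this condition is invariant under twisting.
\item By Section~\ref{S:NumEx}, up to a common twist, the $3$-isogeny characters in this family are those of $\Q(\sqrt{-10})$ and $\Q(\sqrt{30})$. The only quadratic $5$-isogeny character is that of $\Q(\sqrt6)$.
\item These never coincide, so the coarse curves have no non-CM points there.
\end{enumerate}
Either argument must actually be supplied. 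Asserting that ``no non-CM point remains'' and falling back on Proposition~\ref{P:Twist} does not close the case.
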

\begin{proof}
By Theorem~\ref{T:class}, the only possible composite primitive obstruction levels are \(6, 10, 14, 15,\) and \(30\). The computation of the adelic images of the eight curves in Table~\ref{tab:exceptional-images} shows that none has a second primitive
composite obstruction. It remains to consider simultaneous obstructions arising from two distinct levels among \(6,10,14\).

There are \(21\) pairs of Galois image types. For the \(17\) pairs in Table~\ref{tab:multiple-obstructions}, the corresponding fiber product modular curve covers a modular curve having no non-CM rational points. The covering maps arise from subgroup containments, up to conjugacy, after reduction to the level of the covered curve and adjoining $-I$. The rational point determinations come from \texttt{allpointcounts.txt} of \cite{MayleRousecode}. 

\begin{table}[H]
\centering
\caption{Ruling out multiple congruence obstructions}
\label{tab:multiple-obstructions}
\begin{tabular}{|l|l|c|}
\hline
Pair & Covered curve  & \shortstack{No. of non-CM\\$j$-invariants}  \\ \hline
(\mc{6.24.0-3.a.1.1}, \mc{10.120.0-5.a.1.1})  & \mc{15.72.3.a.1} & 0 \\
(\mc{6.24.0-3.a.1.1}, \mc{10.24.0-5.a.2.2})  & \mc{15.72.3.a.1}  &  0 \\
(\mc{6.24.0-3.a.1.1}, \mc{10.24.0-5.a.1.1})  & \mc{15.72.3.a.1}  & 0 \\
(\mc{6.24.0-3.a.1.1}, \mc{14.48.0-7.a.1.1}) & \mc{21.96.5.a.1}  & 0 \\
(\mc{6.24.0-3.a.1.1}, \mc{14.48.0-7.a.2.1}) & \mc{21.96.5.a.1}   &  0 \\
(\mc{6.8.0-3.a.1.1}, \mc{10.120.0-5.a.1.1}) & \mc{15.240.9.a.1}  & 0 \\
(\mc{6.8.0-3.a.1.1}, \mc{14.48.0-7.a.1.1}) & \mc{21.96.3.a.1}  & 0 \\
(\mc{6.8.0-3.a.1.1}, \mc{14.48.0-7.a.2.1}) & \mc{21.96.3.a.2}  & 0 \\
(\mc{6.8.0-3.a.1.2}, \mc{10.120.0-5.a.1.1}) & \mc{15.240.9.a.1}  &  0 \\
(\mc{6.8.0-3.a.1.2}, \mc{14.48.0-7.a.1.1}) & \mc{21.96.3.a.1}   &  0 \\
(\mc{6.8.0-3.a.1.2}, \mc{14.48.0-7.a.2.1}) & \mc{21.96.3.a.2}   &  0 \\
(\mc{10.120.0-5.a.1.1}, \mc{14.48.0-7.a.1.1}) & \mc{35.144.7.a.2} & 0 \\
(\mc{10.120.0-5.a.1.1}, \mc{14.48.0-7.a.2.1}) & \mc{35.144.7.a.1} & 0 \\
(\mc{10.24.0-5.a.2.2}, \mc{14.48.0-7.a.1.1}) & \mc{35.144.7.a.2} & 0 \\
(\mc{10.24.0-5.a.2.2}, \mc{14.48.0-7.a.2.1}) & \mc{35.144.7.a.1}  & 0 \\
(\mc{10.24.0-5.a.1.1}, \mc{14.48.0-7.a.1.1}) & \mc{35.144.7.a.2}  & 0 \\
(\mc{10.24.0-5.a.1.1}, \mc{14.48.0-7.a.2.1}) & \mc{35.144.7.a.1}  & 0 \\ \hline
\end{tabular}
\end{table}

The remaining four pairs are
\[
\begin{aligned}
&(\mc{6.8.0-3.a.1.1},\mc{10.24.0-5.a.2.2}),&
&(\mc{6.8.0-3.a.1.1},\mc{10.24.0-5.a.1.1}),\\
&(\mc{6.8.0-3.a.1.2},\mc{10.24.0-5.a.2.2}),&
&(\mc{6.8.0-3.a.1.2},\mc{10.24.0-5.a.1.1}).
\end{aligned}
\]
Each corresponding fiber product covers \(X_0(15)\), whose four non-CM rational \(j\)-invariants are represented by
\[
\ec{50.a1},\quad \ec{50.a2},\quad \ec{50.a3},\quad \ec{50.a4}.
\]
Neither \(\mc{6.8.0-3.a.1.1}\) nor \(\mc{6.8.0-3.a.1.2}\) contains \(-I\). The kernel argument used in the proof of Proposition~\ref{P:exceptional-twists} therefore shows that, for any representative $E$, a quadratic character giving a twist whose mod \(6\) image is contained in one of these groups must factor through \(\Q(E[6])/\Q\). For each of the four representatives, the computation enumerates the quadratic characters factoring through its \(6\)-division field and tests every corresponding twist against all four pairs. No twist has its mod \(6\) and mod \(10\) images contained in the respective groups. Hence no non-CM elliptic curve has two primitive composite congruence obstructions.
\end{proof}

\begin{proposition}\label{P:no-prime-composite}
Let \(E/\Q\) be a non-CM elliptic curve with a primitive congruence obstruction modulo a composite integer \(d\). Then \(E\) has no congruence obstruction modulo any prime \(q\).
\end{proposition}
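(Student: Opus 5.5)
Here is how I would prove this. By Theorem~\ref{T:class}, \(d\in\{6,10,14,15,30\}\), and \(G_E(d)\) is conjugate to one of the groups in Tables~\ref{tab:infpoints} and \ref{tab:exceptional-images}. Suppose \(E\) also has a congruence obstruction modulo a prime \(q\). By Proposition~\ref{P:prime-level}, \(q\in\{2,3,5,7\}\) and \(G_E(q)\) is one of the groups in Table~\ref{tab:prime-groups}. I would split into two cases: \(q\mid d\) and \(q\nmid d\).

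\textbf{Case \(q\mid d\).} Here \(q\) is a proper divisor of \(d\geq 2\), since \(d\) is composite. This contradicts primitivity of the obstruction modulo \(d\) directly from Definition~\ref{Def:GpObstruction}. Equivalently, for each group \(H\) in the tables we already verified \(\mathcal F_1(H(q))<1\) in the proof of Theorem~\ref{T:class}. So only primes \(q\nmid d\) remain.

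\textbf{Case \(q\nmid d\).}
\begin{enumerate}
\item For the eight exceptional curves at levels \(15\) and \(30\) (Table~\ref{tab:exceptional-images}), I would read off their mod \(q\) images from the adelic images already computed, or check \(\mathcal F_1(G_E(q))<1\) directly. Note that \(q\in\{2,7\}\) for \(d=15\) and \(q=7\) for \(d=30\). The argument in Proposition~\ref{P:exceptional-twists} already shows their mod \(7\) images are full. For \(d=15\), \(q=2\), I would check that the mod \(2\) image of each \(\ec{450.c}\) curve is not contained in \(\mathcal I_1(2)\); equivalently, no curve in the isogeny class has a rational \(2\)-torsion point, and Lemma~\ref{L:isogeny-invariance} reduces this to the isogeny class.
\item For the infinite families at \(d\in\{6,10,14\}\), the extra prime \(q\) gives a pair: \(G_E(d)\) is contained in \(H\) from Table~\ref{tab:infpoints}, and \(G_E(q)\) is contained in a group \(G'\) from Table~\ref{tab:prime-groups}. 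The mod \(q\) obstruction groups are all Borel-type (a rational \(q\)-isogeny with a trivial character), so \(E\) acquires a rational \(q\)-isogeny.
\item I would then use isogeny constraints. Each \(d\)-level group in Table~\ref{tab:infpoints} reduces modulo its odd prime \(p\) to a Borel-type group (labels \(3.8.0\), \(5.24.0\), \(7.48.0\), etc.), so \(E\) has a rational \(p\)-isogeny with \(p\in\{3,5,7\}\). Adding a \(q\)-isogeny with \(q\neq p\) gives a rational cyclic \(pq\)-isogeny. Kenku's theorem \cite[Theorem~1]{MR675184} allows only \(pq\in\{6,10,14,15,21\}\), and \(21\) has finitely many \(j\)-invariants. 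For example, \(d=6\), \(q=5\) gives a \(15\)-isogeny, and \(d=14\), \(q=3\) gives a \(21\)-isogeny; \(d=6,q=7\) and \(d=10,q=3\) are similar. The mod \(q\) and mod \(p\) images together give a fiber product modular curve covering \(X_0(pq)\) or a curve of the type in Table~\ref{tab:multiple-obstructions}. I would run exactly the computation of Proposition~\ref{P:no-two-composite}, listing the finitely many non-CM \(j\)-invariants on \(X_0(15)\) and \(X_0(21)\) and enumerating quadratic twists through the relevant division field via the kernel argument, to show no twist realizes both images.
\item The remaining possibilities are \(d=10\) or \(14\) with \(q=3\), or \(d=6\) with \(q\in\{5,7\}\); these are covered above. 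Pairs with no rational points on the covered curve are eliminated by \cite{MayleRousecode}.
\end{enumerate}

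The main obstacle is the finite twist enumeration over \(X_0(15)\) and \(X_0(21)\). The prime obstruction groups contain \(-I\) only sometimes, so the kernel argument must be applied at level \(dq\) rather than \(6\). I expect it to go through computationally, since a curve with, say, a rational \(3\)-torsion point and a \(5\)-isogeny (\ec{50.a}-type) must be checked against the mod \(10\) obstruction groups \mc{10.24.0-5.a.1.1} and \mc{10.24.0-5.a.2.2} explicitly.
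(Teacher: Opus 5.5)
Your argument is correct, but it reaches finiteness by a different route than the paper.

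The paper also begins from \(q\nmid d\) and from the forced rational \(3\)-, \(5\)-, \(7\)-, or cyclic \(15\)-isogeny. An obstruction modulo \(q\) means some curve in the isogeny class has a rational point of order \(q\), so the paper applies the isogeny--torsion graph classification \cite{MR4203041}. This alone leaves \((d,q)\in\{(6,5),(10,3),(14,3)\}\), and it disposes of \(d\in\{15,30\}\) and of \((6,7)\) with no computation. The paper then reads off from \cite[Theorem~17 and Table~12]{MR4458130} the five isogeny classes that can occur. For one representative of each it checks \(\mathcal F_1(G_E(d))<1\), using isogeny invariance of obstructions (Lemma~\ref{L:isogeny-invariance}). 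Requiring a rational \(q\)-torsion point somewhere in the class already selects the relevant twists, so no twist enumeration is needed.

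Your route uses only Katz, Kenku's list of cyclic isogeny degrees (which kills \(35\)), and the finiteness of \(X_0(15)(\Q)\) and \(X_0(21)(\Q)\). You then control twists with the kernel argument. This avoids both classification inputs, at the cost of a larger finite computation that also carries the vacuous case \((6,7)\).

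Two small corrections. First, the kernel argument already works at level \(d\), because every group in Table~\ref{tab:infpoints} omits \(-I\). Also, for odd \(q\) no mod \(q\) obstruction group contains \(-I\), since \(-I\) has no eigenvalue \(1\). So your concern that these groups contain \(-I\) only sometimes is unfounded; all four of your surviving pairs have \(q\) odd. Second, for \(d\in\{15,30\}\) you need not inspect the eight curves. A cyclic \(15\)-isogeny combined with a \(2\)- or \(7\)-isogeny would give a cyclic \(30\)- or \(105\)-isogeny, which Kenku's theorem excludes.
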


\begin{proof}
By Theorem~\ref{T:class},
\(
d\in\{6,10,14,15,30\}.
\)
Suppose that \(E\) also has a congruence obstruction modulo a prime \(q\). Since the obstruction modulo \(d\) is primitive, we have \(q\nmid d\).

The groups in Tables~\ref{tab:infpoints} and \ref{tab:exceptional-images} show that a primitive obstruction modulo \(6\), \(10\), or \(14\) forces \(E\) to admit a rational isogeny of degree \(3\), \(5\), or \(7\), respectively, while a primitive obstruction modulo \(15\) or \(30\) forces \(E\) to admit a rational cyclic \(15\)-isogeny. On the other hand, Proposition~\ref{P:prime-level} implies that \(E\) is isogenous over \(\Q\) to an elliptic curve with a rational point of order \(q\). The classification of isogeny-torsion graphs \cite[Theorem~1.3 and Tables~1--4]{MR4203041} then leaves only
\[
(d,q)\in \{ (6,5), (10,3), (14,3)\}.
\]
The modular curve computations in \cite[Theorem~17 and Table~12]{MR4458130} determine the elliptic curves that can arise in these three cases. Up to \(\Q\)-isogeny, the possibilities are \ec{50.b} for \((6,5)\), \ec{50.a} and \ec{450.g} for \((10,3)\), and \ec{162.b} and \ec{162.c} for \((14,3)\).

For each of these five isogeny classes, we verify in the file \texttt{multipleobstructions.m} that
\[
\mathcal F_1(G_E(d))<1
\]
for one representative \(E\) of each class, using the corresponding value of \(d\). Hence that representative does not have a congruence obstruction modulo \(d\). By the proof of Lemma~\ref{L:isogeny-invariance}, no curve in any of these isogeny classes has a congruence obstruction modulo \(d\), which is a contradiction.
\end{proof}

\begin{proof}[Proof of Theorem~\ref{T:set}]
If \(E\) has no primitive composite obstruction, Proposition~\ref{P:prime-set} gives exactly the possibilities consisting only of prime levels. If \(E\) has a primitive composite obstruction modulo \(d\), then Theorem~\ref{T:class} gives
\[
d\in\{6,10,14,15,30\}.
\]
Propositions~\ref{P:no-two-composite} and \ref{P:no-prime-composite} show that no other primitive obstruction can occur, so \(\operatorname{PrimObs}(E)=\{d\}\). This gives the complete displayed list in Theorem~\ref{T:set}. Finally, in \texttt{multipleobstructions.m} we compute an elliptic curve realizing each set in the list, proving that every possibility occurs.
\end{proof}

\begin{proof}[Proof of Corollary~\ref{C:universal210}]
Suppose that \(E\) has a congruence obstruction modulo some integer \(m\geq2\). It then has an obstruction modulo \(\rad(m)\), and among the divisors \(d\geq2\) of \(\rad(m)\) for which an obstruction occurs, choose one that is minimal under divisibility. The obstruction modulo \(d\) is primitive. By Theorem~\ref{T:set}, we have \(d\mid210\), and hence an obstruction modulo \(d\) gives an obstruction modulo \(210\). The converse is immediate by taking \(m=210\).
\end{proof}

\section{A level 30 primitive congruence obstruction} \label{S:NumEx}

In this section, we examine a representative of the isogeny class $\ec{14400.ef}$ and study, through Galois representations and division fields, the mechanism underlying the primitive congruence obstruction modulo $30$. This example is particularly noteworthy because its division fields exhibit several entanglements. The explanation is analogous to that of Jones’s example in \cite[Section~1.1]{MR2805578}.

Consider the elliptic curve
\[
E\colon y^2=x^3+12660x-248560.
\]
This curve has LMFDB label \(\ec{14400.ef4}\), and by
Table~\ref{tab:exceptional-images} it has a primitive congruence
obstruction modulo \(30\). We now give a direct explanation of this
obstruction in terms of the mod \(2\), mod \(3\), and mod \(5\)
representations.

Direct computation gives that the discriminant of the \(2\)-division
polynomial has squareclass \(-2\), and that
\begin{equation}\label{PQdefinition}
    Q=(-30,256\sqrt{-10})\in E[3],
\qquad
P=(34,192\sqrt6)\in E[5]
\end{equation}
have orders \(3\) and \(5\), respectively. Let $\varepsilon_2$, $\varepsilon_3$, and $\varepsilon_5$ denote the quadratic characters associated with $\Q(\sqrt{-2})$, $\Q(\sqrt{-10})$, and $\Q(\sqrt{6})$, respectively. For primes $p \nmid 30$, their values are
\[
\varepsilon_2(p) =\left(\frac{-2}{p}\right),
\qquad
\varepsilon_3(p) =\left(\frac{-10}{p}\right),
\qquad
\varepsilon_5(p) =\left(\frac{6}{p}\right).
\]
The coordinates of \(Q\) and \(P\) show that the lines
\(\langle Q\rangle\) and \(\langle P\rangle\) are Galois stable, with
Galois acting through \(\varepsilon_3\) and \(\varepsilon_5\),
respectively.

For \(\ell\in\{3,5\}\) and every prime \(p\nmid30N_E\), the
Galois-stable line above shows that
\(\varepsilon_\ell(p)\) is an eigenvalue of
\(\rho_{E,\ell}(\Frob_p)\). Since
\[
\det\rho_{E,\ell}(\Frob_p)\equiv p\pmod\ell,
\]
the other eigenvalue is \(p\varepsilon_\ell(p) \in \F_\ell\). It follows that
\[
3\nmid\#E_p(\F_p)
\quad\Longleftrightarrow\quad
p\equiv1\pmod3
\quad\text{and}\quad
\varepsilon_3(p)=-1,
\]
and
\[
5\nmid\#E_p(\F_p)
\quad\Longleftrightarrow\quad
p\not\equiv-1\pmod5
\quad\text{and}\quad
\varepsilon_5(p)=-1.
\]

These conditions depend only on the residue class of \(p\) modulo
\(120\). Using quadratic reciprocity to evaluate
\(\varepsilon_3(p)\) and \(\varepsilon_5(p)\) gives the following
divisibility pattern on the reduced residue classes modulo \(120\). A
triangle indicates divisibility by \(3\), a pentagon indicates
divisibility by \(5\), and both symbols indicate divisibility by both
\(3\) and \(5\). An unmarked residue indicates divisibility by neither $3$ nor $5$.
\begin{center}
\resizebox{.9\textwidth}{!}{$
\begin{array}{cccccccc}
\both{1} & \tri{7} & \tri{11} & \tri{13} & \tri{17} & \both{19} & \both{23} & \both{29} \\
\none{31} & \tri{37} & \tri{41} & \pent{43} & \both{47} & \both{49} & \both{53} & \both{59} \\
\none{61} & \pent{67} & \both{71} & \pent{73} & \both{77} & \pent{79} & \tri{83} & \both{89} \\
\both{91} & \pent{97} & \both{101} & \tri{103} & \tri{107} & \pent{109} & \tri{113} & \both{119}
\end{array}
$}
\end{center} \vspace{.25em}
In particular, if $p$ is congruent to neither $31$ nor $61$ mod $120$, then either \(3\mid\#E_p(\F_p)\) or \(5\mid\#E_p(\F_p)\).

It remains to consider the two residue classes \(31\) and \(61\)
modulo \(120\). For primes in either class,
\[
\varepsilon_2(p)=\left(\frac{-2}{p}\right)=-1.
\]
Since the discriminant of the \(2\)-division polynomial has squareclass
\(-2\), the character \(\varepsilon_2\) is the sign character for the permutation action on the three nonzero points of \(E[2]\). Thus
\(\rho_{E,2}(\Frob_p)\) acts as an odd permutation of these three
points. Every odd permutation of three elements is a transposition, so
\(\Frob_p\) fixes a nonzero point of \(E[2]\). Hence
\[
2\mid\#E_p(\F_p)
\]
for \(p\equiv31\) or \(61\pmod{120}\). Additionally, $E$ has no congruence obstruction modulo $2$, since $G_E(2) = \GL_2(\Z/2\Z)$.

We have therefore shown that
\[
\gcd(\#E_p(\F_p),30)>1
\]
    for every prime \(p\nmid30N_E\). Finally, direct computation gives
\[
\#E_{11}(\F_{11})=9,
\qquad
\#E_{31}(\F_{31})=34,
\qquad
\#E_{67}(\F_{67})=55.
\]
These orders are relatively prime to \(10\), \(15\), and \(6\),
respectively. There is therefore no congruence obstruction modulo
\(10\), \(15\), or \(6\). Since these are the maximal
proper divisors of \(30\), the congruence obstruction modulo \(30\) is
primitive.

We now investigate this phenomenon from the perspective of division fields. For each positive integer $n$, let $K_n = \Q(E[n])$ denote the $n$-th division field of $E$. For coprime positive integers $m$ and $n$, we say that $E$ has an $(m,n)$-entanglement if $K_m\cap K_n$ is strictly larger than $\Q$.

Recall that the discriminant of $E$ has squareclass $-2$. By \eqref{PQdefinition} and the Weil pairing, we have
\begin{equation}\label{quadraticfields}
\Q(\sqrt{-2}) \subseteq K_2,\qquad
\Q(\sqrt{-3},\sqrt{-10}) \subseteq K_3,\qquad
\Q(\sqrt{5},\sqrt{6}) \subseteq K_5.
\end{equation}
For coprime positive integers $a$ and $b$, we use the index data in the Galois representation sections of the LMFDB to check whether there is an $(a,b)$-entanglement. In particular, we find that
$$K_2 \cap K_3 = K_2 \cap K_5 = \Q \qquad \text{ and } \qquad K_3 \cap K_5 =\Q(\sqrt{30}).$$
From \eqref{quadraticfields} and the LMFDB data, we see that
$$K_2\cap K_{15} = \Q(\sqrt{-2}),\qquad
K_3\cap K_{10} = \Q(\sqrt{-3},\sqrt{-10}),\qquad
K_5\cap K_6=\Q(\sqrt{5},\sqrt{6}).$$
Thus, for each factorization $30=pqr$ into primes, the curve $E$ has a $(p,qr)$-entanglement.

\section{Least prime divisors in the absence of congruence obstructions} \label{S:ProofOfQuant}

In this section, we prove the quantitative version of Jones's unboundedness result stated in Theorem~\ref{Quanticep}. Our main tool is the following effective form of the Chebotarev density theorem with avoidance, proved in \cite[Corollary~6]{MR4743814} using work of Bach and Sorenson \cite{MR1355006}.

\begin{theorem} \label{ECDTA} Let $K/\Q$ be a finite Galois extension, let $m$ be a squarefree positive integer, and set $\widetilde{K} \coloneqq K(\sqrt{m})$. Assume GRH for the Dedekind zeta function of $\widetilde K$. Let $C \subseteq \Gal(K/\Q)$ be a nonempty subset that is closed under conjugation. Then there exists a prime $p \nmid m$ that is unramified in $K/\Q$ and whose Frobenius conjugacy class in $\Gal(K/\Q)$ is contained in $C$, such that
\begin{equation}
    p \ll  (\log d_{\widetilde{K}} + [\widetilde{K}:\Q])^2,
\end{equation} 
where $d_{\widetilde{K}}$ is the absolute discriminant of $\widetilde{K}$, and the implied constant is absolute.
\end{theorem}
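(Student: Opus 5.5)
The plan is to deduce the statement directly from the conditional effective Chebotarev theorem of Bach and Sorenson \cite{MR1355006}, applied to the enlarged field $\widetilde K=K(\sqrt m)$ rather than to $K$. Recall their result. Let $L/\Q$ be Galois and let $D$ be a conjugacy class of $\Gal(L/\Q)$. Assume GRH for $\zeta_L$. Then there is a prime $p$, unramified in $L$, with $\Frob_p\in D$ and
\[
p\leq (a\log d_L+b[L:\Q]+c)^2 ,
\]
where $a,b,c$ are absolute constants. The adjunction of $\sqrt m$ is there to force the prime produced to avoid the divisors of $m$, using ramification.

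\textbf{Step 1: lift the target set.} Let $r\colon\Gal(\widetilde K/\Q)\to\Gal(K/\Q)$ be restriction, which is surjective. Set $\widetilde C\coloneqq r^{-1}(C)$. This set is nonempty because $r$ is surjective. It is closed under conjugation because $C$ is and $r$ is a homomorphism. Choose any conjugacy class $D\subseteq\widetilde C$.

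\textbf{Step 2: apply Bach--Sorenson.} Applying their theorem to $L=\widetilde K$ and the class $D$ yields a prime $p$ that is unramified in $\widetilde K$, has Frobenius class $D$, and satisfies
\[
p\leq (a\log d_{\widetilde K}+b[\widetilde K:\Q]+c)^2 .
\]
Since $[\widetilde K:\Q]\geq1$, the constant $c$ can be absorbed, which gives $p\ll(\log d_{\widetilde K}+[\widetilde K:\Q])^2$. Because $p$ is unramified in $\widetilde K$, it is also unramified in the subfield $K$. By functoriality of Frobenius under restriction, its Frobenius class in $\Gal(K/\Q)$ is $r(D)\subseteq C$.

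\textbf{Step 3: show $p\nmid m$.} This is the only place where the choice of $\widetilde K$ matters. If $m=1$, there is nothing to prove. Otherwise $m>1$ is squarefree and $\Q(\sqrt m)\subseteq\widetilde K$ is a quadratic field whose discriminant is $m$ or $4m$. Every odd prime dividing $m$ divides this discriminant exactly, so it ramifies in $\Q(\sqrt m)$. If $2\mid m$, then $m\equiv 2\pmod 4$, so $2$ divides the discriminant $4m$ and also ramifies. Hence every prime divisor of $m$ ramifies in $\widetilde K$, and the prime $p$ from Step 2 cannot divide $m$.

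I do not expect a serious obstacle. The one point to check carefully is the precise form of the Bach--Sorenson bound. Some versions are stated with $\log d_L$ alone, or require $L\neq\Q$. In the degenerate case $\widetilde K=\Q$ one has $K=\Q$ and $m=1$, and the claim then only asks for some prime, such as $p=2$, which satisfies the bound trivially. Since $[\widetilde K:\Q]\geq 1$, the displayed bound $(\log d_{\widetilde K}+[\widetilde K:\Q])^2$ dominates any such variant up to an absolute constant.
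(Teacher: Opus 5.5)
Your proof is correct and is essentially the argument behind the paper's statement. The paper gives no proof of its own but cites \cite[Corollary~6]{MR4743814}, which is obtained the same way: apply the Bach--Sorenson bound to $\widetilde K=K(\sqrt m)$ with the pulled-back conjugation-stable set, so that every prime dividing $m$ ramifies in $\widetilde K$ and is automatically avoided; your absorption of the constant and treatment of the degenerate case $\widetilde K=\Q$ are also fine.
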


We now prove Theorem~\ref{Quanticep}.

\begin{proof}[Proof of Theorem~\ref{Quanticep}]
Fix an integer \(n\ge 3\), and set
\[ M\coloneqq \prod_{\ell\le n}\ell. \]
Consider the subset
\[
C\coloneqq G_E(M) \setminus \mathcal{I}_1(M) = \{A\in G_E(M): \det(I-A)\in (\Z/M\Z)^\times\}\subseteq G_E(M).
\]
Since \(E\) has no congruence obstruction modulo any integer \(m\geq 2\), in particular, it has no congruence obstruction modulo \(M\). Hence \(C\) is nonempty. Moreover, \(C\) is stable under conjugation in \(G_E(M)\), since \(\det(I-A)\) depends only on the conjugacy class of \(A\). Now define the number fields
\[
K\coloneqq \Q(E[M])
\quad\text{and}\quad
\widetilde K\coloneqq K\!\left(\sqrt{\rad(2N_EM)}\right).
\]
Applying Theorem~\ref{ECDTA} with $m = \rad(2 N_E M)$, we find that there exists a prime
\(p\nmid 2N_EM\) such that
\begin{equation}\label{E:inCz}
\rho_{E,M}(\Frob_p)\in C
\end{equation}
and
\begin{equation}\label{eq:chebotarev-bound}
p\ll \bigl(\log d_{\widetilde K}+[\widetilde K:\Q]\bigr)^2,
\end{equation}
with an absolute implied constant.

Since $p \nmid M N_E$ and \eqref{E:inCz} holds, \(p\) is a prime of good reduction and no prime \(\ell\leq n\) divides
\(\#E_p(\F_p)\). Because \(n\geq 3\), the prime \(p\) is at least \(5\), and the Hasse bound gives
\(\#E_p(\F_p)>1\). Thus the least prime divisor \(c_E(p)\) is defined, and we have
\[
c_E(p)>n.
\]

It remains to bound the right-hand side of \eqref{eq:chebotarev-bound}. We first have
\begin{equation} \label{E:DegBd}
    [\widetilde K:\Q] \leq 2[K:\Q]=2|G_E(M)|\le 2|\GL_2(\Z/M\Z)|\le 2M^4.
\end{equation}
Moreover, \(K/\Q\) is ramified only at primes dividing \(N_EM\), so
\(\widetilde K/\Q\) is ramified only at primes dividing \(2N_EM\). Thus
\( \rad(d_{\widetilde K})\) divides \(\rad(2N_EM). \)
By \cite[Proposition 6]{MR644559},
\[
\log d_{\widetilde K}
\le
\bigl([\widetilde K:\Q]-1\bigr)\log \rad(d_{\widetilde K})
+
[\widetilde K:\Q]\log[\widetilde K:\Q].
\]
Using \eqref{E:DegBd} and allowing the implied constant to depend on $E$, we obtain
\begin{equation} \label{E:LogDiscBd}
\log d_{\widetilde K}\ll_E M^4\log M.
\end{equation}
Combining \eqref{eq:chebotarev-bound}, \eqref{E:DegBd}, and \eqref{E:LogDiscBd}, we find that
\begin{equation} \label{E:pBd} p \ll_E M^8 (\log M)^2. \end{equation}

The prime number theorem, in the equivalent form recalled in \cite[Theorem 4.4]{MR434929} gives that $\log M = \sum_{\ell \leq n} \log \ell$ is asymptotic to $n$ as $n \to \infty$. Taking natural logarithms in \eqref{E:pBd} therefore gives
\[
\log p \leq 8 \log M + 2 \log \log M + O_E(1) = (8 + o(1)) n
\]
as $n \to \infty$. Thus, for every fixed $0 < \kappa < 1/8$ and all sufficiently large $n$, we have 
\( n > \kappa \log p. \)
Since $c_E(p) > n$, it follows that 
\[
c_E(p) > \kappa \log p.
\]

As \(n\) may be chosen arbitrarily large, this construction gives primes \(p\) of good reduction with \(c_E(p)>n\) for every \(n\geq 3\). A fixed prime \(p\) can satisfy \(c_E(p)>n\) for only finitely many values of \(n\). Hence the set of primes obtained in this way is infinite, completing the proof.
\end{proof}

\bibliographystyle{amsalpha}
\bibliography{References}

\end{document}